\documentclass[11pt,twoside]{article}
\usepackage[margin=1.3in]{geometry}

\usepackage{amsfonts,amsmath,amssymb,amsthm}
\usepackage{bm,bbm,enumitem,latexsym,makeidx,mathrsfs,times}
\usepackage{cases,cite,color,xcolor}
\usepackage[colorlinks,citecolor=violet,linkcolor=teal]{hyperref}
\usepackage[title,titletoc,toc]{appendix}
\usepackage{esint}

\let\oldbibliography\thebibliography
\renewcommand{\thebibliography}[1]{\oldbibliography{#1}\setlength{\itemsep}{0pt}}

\makeindex
\newtheorem{theorem}{Theorem}[section]

\newtheorem{definition}{Definition}[section]
\newtheorem{lemma}{Lemma}[section]
\newtheorem{proposition}{Proposition}[section]
\newtheorem{remark}{Remark}[section]

\numberwithin{equation}{section}

\newcommand{\N}{\mathbb N}

\newcommand{\R}{\mathbb R}

\newcommand{\Sp}{\mathbb S}

\allowdisplaybreaks

\begin{document}
\title{\textbf{Symmetry, monotonicity, and asymptotics of singular solutions to semilinear elliptic equations}\bigskip}

\author{Xusheng Du\footnote{X. Du is supported by NSFC grant 12501146.} \quad and \quad Hui Yang\footnote{H. Yang is supported by NSFC grant 12301140.}}

\date{\today}

\maketitle

\begin{abstract}
In this paper, we study singular positive solutions to the semilinear elliptic equation
$$
- \Delta u = f(u) ~~~~~~ \textmd{in} ~ \Omega \setminus \Gamma,
$$
where $\Omega \subset \R^n$ is a bounded or unbounded domain, and $\Gamma \subset \Omega$ is a singular closed set with zero Newtonian capacity. When $\Omega = \R^n$ and $\Gamma \subset \{ x_1 = 0 \}$, we establish the symmetry of singular solutions with respect to the hyperplane $\{ x_1 = 0 \}$ and their monotonicity in the $x_1$-direction. For the case where $\Omega \subset \R^n$ and $\Gamma$ is a smooth closed manifold of dimension $\leq n - 2$, we show the asymptotic symmetry of singular solutions with respect to the normal direction of $\Gamma$. These results significantly improve those of Chen-Lin (Duke Math. J. 1995; Ann. Scuola Norm. Sup. Pisa Cl. Sci. 2001), Li (Invent. Math. 1996) and Sciunzi (J. Math. Pures Appl. 2017). Unlike their proofs, we employ an improved version of the moving sphere method, which also simplifies the analysis and extends its applicability.

\medskip

\noindent{\it Keywords}: Symmetry; Monotonicity; Asymptotic symmetry; Singular solutions; Elliptic equations

\medskip

\noindent{\it MSC (2020)}: 35J61; 35A21; 35B06; 35B40
\end{abstract}

\section{Introduction}\label{sec:Int}

In this paper, we are interested in singular positive solutions of the semilinear elliptic equation
\begin{equation}\label{eq:main}
- \Delta u = f(u) ~~~~~~ \textmd{in} ~ \Omega \setminus \Gamma,
\end{equation}
where $\Omega \subset \R^n$ is a bounded or unbounded domain, $n \geq 3$, and $\Gamma \subset \Omega$ is a closed set. The nonlinear term $f(t)$ is locally bounded on $(0, + \infty)$. Equation \eqref{eq:main}, also known as the nonlinear Poisson equation, frequently arises in geometry, physics, and various branches of applied sciences. For $f(t) = t^{- p}$ with $p > 0$, equation \eqref{eq:main} appears in certain problems in fluid mechanics, particularly in the study of pseudoplastic fluids \cite{NC80}. When $f(t) = t^p$ with $p > 0$, equation \eqref{eq:main} reduces to the Lane-Emden equation, a fundamental tool for analyzing the internal structure of stars and modeling pressure distribution in plasmas \cite{Sp56,Ch57}. In the special case $f(t) = t^\frac{n + 2}{n - 2}$, equation \eqref{eq:main} is the so-called Yamabe equation, which plays a central role in conformal geometry. The classical works of Schoen and Yau \cite{Sch88,SY88} on complete locally conformally flat manifolds have highlighted the significance of studying solutions to the Yamabe equation with a singular set $\Gamma$. The existence of singular solutions has been extensively investigated in \cite{Sch88,Pa94,MP96,CLin99,MP99,CD20} and the references therein, where singular solutions of the Lane-Emden equation ($1 < p < \frac{n + 2}{n - 2}$) have also played a crucial role. When $f(t) = - t^\frac{n + 2}{n - 2}$, equation \eqref{eq:main} with a singular set $\Gamma$ arises in the study of the Loewner-Nirenberg problem; see, e.g., \cite{LN74,Ve81,FM93,HJS} and the references therein. Furthermore, the combined nonlinearity $f(t) = \lambda t^p + \mu t^q$ also appears in Schr\"odinger equations that model nonlinear optics and Bose-Einstein condensates \cite{TVZ,MXZ}.

In this paper, we are mainly concerned with the symmetry, monotonicity, and asymptotics of singular positive solutions to equation \eqref{eq:main}. The singular set $\Gamma$ can not only be a discrete set of points but also a smooth submanifold. A solution $u$ has a non-removable singularity on $\Gamma$ if there exists at least one point $x_0 \in \Gamma$ such that
$$
\limsup_{x \in \Omega \setminus \Gamma \atop x \to x_0} u(x) = + \infty.
$$
To state our result, we recall that the Newtonian capacity of a compact set $\Gamma$ is defined as
$$
{\rm Cap} (\Gamma) = \inf \bigg\{ \int_{\R^n} |\nabla \varphi|^2 \,{\rm d}x : \varphi \in C_c^\infty (\R^n) ~ \textmd{and} ~ \varphi \geq 1 ~ \textmd{on} ~ \Gamma \bigg\}.
$$
For a general set $\Gamma$, the definition of Newtonian capacity can be seen in Section \ref{sec:Pre}. Note that if the $(n - 2)$-dimensional Hausdorff measure of $\Gamma$ is finite, then its Newtonian capacity ${\rm Cap} (\Gamma)$ vanishes. Throughout the paper, we always assume ${\rm Cap} (\Gamma) = 0$.

Our first focus is on the symmetry and monotonicity of global singular solutions to \eqref{eq:main} in the case where $\Omega = \R^n$ and $\Gamma$ is a closed subset of the hyperplane $\{ x_1 = 0 \}$ with vanishing Newtonian capacity.

\begin{theorem}\label{thm:sym-x1}
Suppose that $\Gamma \subset \{ x_1 = 0 \}$ is a closed set with ${\rm Cap} (\Gamma) = 0$, and $f : (0, + \infty) \to \R$ is locally bounded and satisfies
\begin{equation}\label{eq:mono}
t^{ - \frac{n + 2}{n - 2} } f(t) ~ \textmd{is nonincreasing on} ~ (0, + \infty).
\end{equation}
Let $u \in C^2 (\R^n \setminus \Gamma)$ be a positive solution of
\begin{equation}\label{eq:main-Rn}
- \Delta u = f(u) ~~~~~~ \textmd{in} ~ \R^n \setminus \Gamma.
\end{equation}
If $u$ has a non-removable singularity on $\Gamma$, then $u$ is symmetric with respect to the hyperplane $\{ x_1 = 0 \}$ and monotonically decreasing in the $x_1$-direction in $\{ x_1 > 0 \}$. In particular, when $\Gamma = \{ 0 \}$ and the origin is a non-removable singularity, $u$ is radially symmetric and monotonically decreasing about the origin.
\end{theorem}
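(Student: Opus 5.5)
Our plan is to use the moving sphere method centered at points $b$ on the line through the origin in the $x_1$-direction, i.e.\ $b = (b_1, 0, \dots, 0)$ with $b_1 < 0$ (and symmetrically $b_1 > 0$). A plane reflection then arises as the limit of spheres whose radii tend to infinity. For $x \in \R^n \setminus \{b\}$ and $\lambda > 0$ we set
$$
u_{b,\lambda}(x) = \Big( \frac{\lambda}{|x-b|} \Big)^{n-2} u\Big( b + \frac{\lambda^2 (x-b)}{|x-b|^2} \Big),
$$
and $w_\lambda = u - u_{b,\lambda}$. By \eqref{eq:mono}, $u_{b,\lambda}$ satisfies $-\Delta u_{b,\lambda} = (\lambda/|x-b|)^{n+2} f(u(\cdot)) \ge f(u_{b,\lambda})$ wherever $|x-b| > \lambda$. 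Hence $w_\lambda$ satisfies a linear inequality $-\Delta w_\lambda \ge c(x) w_\lambda$ with $c$ locally bounded on the set $\{w_\lambda<0\}$, since $f$ is locally bounded. The singular set of $u_{b,\lambda}$ in $\{|x-b|>\lambda\}$ is the inverted image of $\Gamma$, and it lies inside the ball $B_\lambda(b)$ exactly when $\Gamma$ avoids the inversion region. For $b_1<0$ and $\lambda<|b_1|$, the set $\Gamma\subset\{x_1=0\}$ lies outside $\overline{B_\lambda(b)}$, so its image lies inside the ball and no singularity of $u_{b,\lambda}$ appears in the exterior region. The singularities of $u$ itself lie in the exterior region, where they only help, because $w_\lambda$ is large there. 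Capacity zero is what lets us run the maximum principle across $\Gamma$: we use cut-off functions $\varphi_\varepsilon$ that are $0$ near $\Gamma$, with $\|\nabla\varphi_\varepsilon\|_{L^2}\to0$, together with the fact that $u$ is superharmonic-type near $\Gamma$ (or that $w_\lambda^-$ vanishes near $\Gamma$, where $u\to\infty$ in the appropriate averaged sense). I take the capacity lemmas of Section~\ref{sec:Pre} as given for this purpose.

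\emph{Step 1 (start).} For each $b$ with $b_1<0$ and small $\lambda$, we show $w_\lambda\ge0$ on $\{|x-b|\ge\lambda\}\setminus\Gamma$. Near $b$ this is the usual argument, since $u$ is $C^2$ and positive near $b$. For large $|x|$ we use a lower bound $u(x)\ge c|x|^{2-n}$, which follows from superharmonicity when $f\ge0$. In general, since $f$ may change sign, we instead compare on bounded regions and treat the part near infinity by the Kelvin-transform trick, which is built into the sphere setup.

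\emph{Step 2 (move).} Define $\bar\lambda(b)=\sup\{\mu\le|b_1|: w_\lambda\ge0 \text{ for all } \lambda\in(0,\mu)\}$. We claim $\bar\lambda(b)=|b_1|$. Suppose instead $\bar\lambda<|b_1|$. By the strong maximum principle, applicable on the connected set $\R^n\setminus\Gamma$ because capacity-zero sets do not disconnect, either $w_{\bar\lambda}\equiv0$ or $w_{\bar\lambda}>0$. The first case is impossible: $u$ blows up at some $x_0\in\Gamma$, whereas $u_{b,\bar\lambda}$ is bounded near $x_0$. In the second case we push past $\bar\lambda$, using a Hopf-type bound on $|x-b|=\bar\lambda$ and compactness. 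The non-removable singularity is what gives $w$ a positive margin near $\Gamma$. The main obstacle is controlling the region near infinity and near $\Gamma$ in this push, where uniform positivity fails. Near $\Gamma$, I would first try to show $\liminf w_{\bar\lambda}>0$ in a capacity sense, using the fact that $\Gamma$ is polar for the linear operator. Near infinity, I would use $\liminf_{|x|\to\infty}|x|^{n-2}w_{\bar\lambda}$ and a careful use of the Kelvin transform.

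\emph{Step 3 (conclusion).} We now have $u_{b,\lambda}\le u$ for $\lambda<|b_1|$, and by continuity also for $\lambda=|b_1|$. Fix $b=(b_1,0,\dots,0)$ and let $b_1\to-\infty$ with $\lambda=|b_1|$. The spheres then converge to the plane $\{x_1=0\}$, and $u_{b,\lambda}(x)\to u(x^*)$, where $x^*=(-x_1,x')$. This gives $u(x^*)\le u(x)$ for $x_1<0$. The same argument with $b$ placed on a translated axis $(b_1,y')$ works for every $y'$, since $\Gamma$ still lies in the plane. Applying the argument from the other side gives the reverse inequality, hence symmetry. For monotonicity, we take spheres with $\lambda<|b_1|$ whose limits are the planes $\{x_1=-s\}$, $s>0$, with the singularity on the far side. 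This gives $u(x^{s})\le u(x)$ for $x_1<-s$, i.e.\ $u$ is increasing in $x_1$ on $\{x_1<0\}$, and by symmetry decreasing in $\{x_1>0\}$. For $\Gamma=\{0\}$, every hyperplane through the origin can play the role of $\{x_1=0\}$, so $u$ is radially symmetric and decreasing.
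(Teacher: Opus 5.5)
Your overall architecture matches the paper's: spheres centred on the $x_1$-axis with $\lambda<|b_1|$, so that the Kelvin image of $\Gamma$ lies inside the ball; connectedness of $\R^n\setminus\Gamma$; the non-removable singularity to exclude $w_{\bar\lambda}\equiv0$; and spheres degenerating to planes. The gap is that the steps carrying the real difficulty are not done: starting the spheres and pushing them, at infinity, for sign-changing, non-Lipschitz $f$. You concede as much in Step 2.

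\textbf{The linearization.} Local boundedness of $f$ does not bound $(f(u)-f(u_{b,\lambda}))/(u-u_{b,\lambda})$, because $f$ is not Lipschitz. (Also, the correct inequality is $-\Delta u_{b,\lambda}\le f(u_{b,\lambda})$ for $|x-b|>\lambda$, not $\ge$.) The paper instead applies \eqref{eq:mono} on the set where $u<u(y^{b,\bar\lambda})$. This gives $-\Delta w_{\bar\lambda}\ge u^{-\frac{n+2}{n-2}}f(u)\,\big(u^{\frac{n+2}{n-2}}-u_{b,\bar\lambda}^{\frac{n+2}{n-2}}\big)$, and the mean value theorem is applied to the power, not to $f$.

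\textbf{Control at infinity.} This is the decisive gap. A merely locally bounded potential gives no control at infinity: positive solutions of $-\Delta v+v=0$ outside a ball decay exponentially, so no bound $v\gtrsim|y|^{2-n}$ can come from a bounded potential. The missing idea is the exterior maximum principle of Lemma~\ref{lem:supsol-ext}, for $-\Delta v+cv\ge0$ with $\sup|y-b|^4c(y)<\infty$. The Kelvin transform turns infinity into a removable capacity-zero point. What makes the transformed potential bounded is the $|y-b|^{-4}$ decay of $c$, which is extracted from \eqref{eq:mono} on well-chosen sublevel sets.

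To start the spheres, the paper sets $m=\inf_{\partial B_{\lambda_1}(b)}u$ and works on $\Omega_1=\{u<(\lambda_1/|y-b|)^{n-2}m\}$.
\begin{itemize}
\item There \eqref{eq:mono} gives $-\Delta u\ge\min\{0,f(m)/m\}(\lambda_1/|y-b|)^4u$.
\item Hence $u\ge C_1(\lambda_1/|y-b|)^{n-2}m$ on all of $\R^n\setminus(B_{\lambda_1}(b)\cup\Gamma)$.
\item Then $\lambda_2$ is chosen small enough to absorb $C_1\le1$.
\end{itemize}
This is the content missing from your ``compare on bounded regions and use the Kelvin-transform trick.''

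To push past $\bar\lambda$, the paper works on $\Omega_2=\{u<\min(u(y^{b,\bar\lambda}),2u_{b,\bar\lambda})\}$.
\begin{itemize}
\item The first constraint permits the use of \eqref{eq:mono}.
\item The second gives $u\le2\sup_{B_{\bar\lambda}(b)}u$, so the coefficient is bounded below, and also $u^{4/(n-2)}\lesssim|y-b|^{-4}$.
\item On $\partial B_R(b)$ and on the level sets $\{u=u(y^{b,\bar\lambda})\}$ and $\{u=2u_{b,\bar\lambda}\}$, one checks directly that $|y-b|^{n-2}w_{\bar\lambda}$ has a positive lower bound.
\end{itemize}
Lemma~\ref{lem:supsol-ext} then yields $w_{\bar\lambda}\ge C_2(R/|y-b|)^{n-2}$ outside $B_R(b)$. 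This dominates $u_{b,\lambda}-u_{b,\bar\lambda}=o(1)\,|y-b|^{2-n}$, uniformly in $|y-b|\ge R$ as $\lambda\downarrow\bar\lambda$, and that is what lets you push.

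\textbf{Near $\Gamma$.} Nothing special is needed: the capacity-zero maximum principles only require lower boundedness. The non-removable singularity, which may sit at a single point of $\Gamma$, gives no ``positive margin near $\Gamma$''; it is used only to exclude $w_{\bar\lambda}\equiv0$.

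\textbf{Step 3.} The sides are reversed. For $b_1<0$, the exteriors of the balls tend to $\{x_1>0\}$, resp.\ $\{x_1>-s\}$, the side where $\Gamma$ lies. So you get $u(x)\ge u(x^*)$ for $x_1>0$ and $u(x)\ge u(x^s)$ for $x_1>-s$. Your inequality on $\{x_1<-s\}$ would give the opposite monotonicity to the one you state.
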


\begin{remark}\label{rem:sym-x1}
We only assume that $f$ is locally bounded on $(0, + \infty)$, not on $[0, + \infty)$. This also includes the singular nonlinear term $f(t) = t^{- p}$. Moreover, we do not assume that $f$ is nonnegative. Hence, Theorem \ref{thm:sym-x1} can also be applied to the Loewner-Nirenberg problem ($f(t) = - t^\frac{n + 2}{n - 2}$).
\end{remark}

For the isolated singularity $\Gamma = \{ 0 \}$ and the nonlinearity $f(t) = t^p$ with $1 < p \leq \frac{n + 2}{n - 2}$, the symmetry of singular solutions was first proved by Caffarelli, Gidas and Spruck \cite{CGS} using the moving plane method. Chen and Lin \cite{CLin95,CLin01} later generalized this symmetry result to the case where $\Gamma$ is a $k$-dimensional subspace of $\R^n$ with $k \leq n - 2$. For a general singular set $\Gamma$ and nonlinearity $f$, Sciunzi \cite{Sci17} established the above symmetry and monotonicity under the following two conditions:
\begin{enumerate}[label = (\arabic*)]
\item $f$ is $C^1$ and convex (with $f(0) = 0$), and it holds that
\begin{equation}\label{eq:Sci-con}
f'(t) \leq C_f t^\frac{4}{n - 4} ~~~~~~ \textmd{for any} ~ t > 0.
\end{equation}

\item $\Gamma \subset \{ x_1 = 0 \} \cap B_{R_0}$ and $u \in L^\frac{2 n}{n - 2} (\R^n \setminus B_{R_0})$ for some $R_0 > 0$.
\end{enumerate}
Clearly, Theorem \ref{thm:sym-x1} improves the result of Sciunzi \cite{Sci17} in two key aspects. First, it does not require any regularity or convexity assumptions on $f$. Second, it removes the integrability condition $u \in L^\frac{2 n}{n - 2} (\R^n \setminus B_{R_0})$, which is not satisfied by certain classical singular solutions. For example, there exist Delaunay-type singular solutions satisfying $0 < c_1 \leq |x|^\frac{n - 2}{2} u(x) \leq c_2$ for all $x \in \R^n \setminus \{ 0 \}$ when $f(t) = t^\frac{n + 2}{n - 2}$ and $\Gamma = \{ 0 \}$. Moreover, although conditions \eqref{eq:mono} and \eqref{eq:Sci-con} are mutually independent, our condition \eqref{eq:mono} seems to cover more typical examples in physics and geometry. For instance, $f(t) = t^p$ with $p < \frac{n + 2}{n - 2}$ fails to meet \eqref{eq:Sci-con}. In the special case $f(t) = t^\frac{n + 2}{n - 2}$, Esposito, Farina and Sciunzi \cite{EFS} have also shown the symmetry without requiring the assumption $u \in L^\frac{2 n}{n - 2} (\R^n \setminus B_{R_0})$. However, their method does not seem to yield monotonicity.

When $\Omega$ is a bounded convex domain symmetric with respect to the hyperplane $\{ x_1 = 0 \}$, and the homogeneous Dirichlet condition is imposed on the boundary $\partial\Omega$, the symmetry of singular positive solutions of \eqref{eq:main} has also been extensively studied; see, e.g., \cite{CLN09,CLN12,CLN13,Sci17,EFS} and the references therein. These results generalize the celebrated symmetry theorem of Gidas-Ni-Nirenberg \cite{GNN}. Our second focus is on symmetry results over bounded domains without imposing any boundary conditions. This setting, which avoids boundary conditions, is of significant importance in many applications of conformal geometry. In this context, singular solutions exhibit asymptotic symmetry.

Assume that $\Gamma \subset \Omega$ is a smooth $k$-dimensional closed manifold with $k \leq n - 2$. Let $N$ be a tubular neighborhood of $\Gamma$ such that any point of $N$ can be uniquely expressed as the sum $z + v$ where $z \in \Gamma$ and $v \in (T_z \Gamma)^\perp$, the orthogonal complement of the tangent space of $\Gamma$ at $z$. Denote by $\Pi$ the orthogonal projection of $N$ onto $\Gamma$. For small $r > 0$ and $z \in \Gamma$, define
\begin{equation}\label{eq:Pir-1}
\Pi_r^{- 1} (z) = \{ x \in N : \Pi (x) = z,~ |x - z| = r \}.
\end{equation}
We establish the following asymptotic symmetry of solutions near the singular set $\Gamma$.

\begin{theorem}\label{thm:asym}
Let $\Omega$ be a bounded domain in $\R^n$, and $\Gamma \subset \Omega$ be a smooth $k$-dimensional closed manifold with $k \leq n - 2$. Let $N$ and $\Pi$ be described as above. Suppose that $f : (0, + \infty) \to (0, + \infty)$ is locally bounded and satisfies
\begin{equation}\label{asym-mono}
t^{ - \frac{n + 2}{n - 2} } f(t) ~ \textmd{is nonincreasing on} ~ (0, + \infty).
\end{equation}
Suppose further that $f$ satisfies the growth condition
\begin{equation}\label{eq:grow}
\lim_{t \to + \infty} t^{ - \frac{n}{n - 2} } f(t) = + \infty.
\end{equation}
Let $u \in C^2 (\Omega \setminus \Gamma)$ be a positive solution of \eqref{eq:main}. Then, for $x, y \in \Pi_r^{- 1} (z)$, we have
\begin{equation}\label{eq:asym}
u(x) = u(y) (1 + O(r)) ~~~~~~ \textmd{as} ~ r \to 0^+,
\end{equation}
where $O(r)$ is uniform for all $z \in \Gamma$.
\end{theorem}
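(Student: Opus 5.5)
We follow the strategy of Chen--Lin and Li's moving sphere approach to asymptotic symmetry, but use the improved moving sphere method that the paper develops (the same tool as for Theorem \ref{thm:sym-x1}). Fix $z \in \Gamma$ and two points $x, y \in \Pi_r^{-1}(z)$. The goal is to compare $u$ with its Kelvin transforms
$$
u_{\bar x, \lambda}(\xi) = \Big(\frac{\lambda}{|\xi - \bar x|}\Big)^{n-2} u\Big(\bar x + \frac{\lambda^2 (\xi - \bar x)}{|\xi - \bar x|^2}\Big)
$$
centered at points $\bar x$ lying in the normal plane $z + (T_z\Gamma)^\perp$ at distance of order $r$ from $z$, on a fixed small ball $B_{\delta}(\bar x) \setminus \Gamma$. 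By \eqref{asym-mono}, $w_\lambda = u_{\bar x,\lambda} - u$ satisfies a linear differential inequality in the region where $w_\lambda < 0$; concretely, $-\Delta w_\lambda \ge c_\lambda(\xi) w_\lambda$ there, with $c_\lambda$ locally bounded away from $\Gamma$. The singular set $\Gamma$ has zero capacity, which lets us run the maximum principle across it via cutoff test functions. This is exactly as in the proof of Theorem \ref{thm:sym-x1}, with the caveat that the reflected singular set is no longer contained in $\Gamma$, since $\Gamma$ is curved.

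The key steps, in order, are as follows.

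\begin{enumerate}[label = (\roman*)]
\item Establish the upper and lower bounds near $\Gamma$. The growth condition \eqref{eq:grow} together with \eqref{asym-mono} should force the a priori estimate $u(\xi) \le C\, d(\xi,\Gamma)^{-(n-2)/2}$ via a blow-up/doubling argument, using the Liouville theorem for the whole-space problem. We also expect $\liminf_{\xi\to\Gamma} u = +\infty$ in a controlled way, or at least $u \ge c>0$ by superharmonicity (recall $f>0$). This bound also gives the Harnack inequality on annuli $\{d(\xi,\Gamma)\sim r\}$.

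\item Start the moving sphere for all $\bar x$ with $|\bar x - z| \le r_0$ and small $\lambda$. We get $u_{\bar x,\lambda} \ge u$ on $B_\delta(\bar x)\setminus (B_\lambda(\bar x)\cup\Gamma)$, using positivity of the boundary values on $\partial B_\delta$ and the superharmonic lower bound.

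\item Push $\lambda$ up to $\bar\lambda = |\bar x - z|(1 - C|\bar x - z|)$, i.e.\ almost to the singular set. Here we must show that the minimal value of $w_\lambda$ on $\partial B_\delta(\bar x)$ (of order $\lambda^{n-2}\min u - \max_{\partial B_\delta} u$ after appropriate scaling) stays nonnegative. The singularity of $u$ near $\Gamma$ compared with the flatness of $\Gamma$ at scale $r$ produces only an $O(r)$ error. The improved method handles this by comparing with a slightly perturbed sphere, or by adding a small harmonic correction $\varepsilon(|\xi-\bar x|^{2-n}-\delta^{2-n})$ of size $O(|\bar x - z|)$, to absorb the curvature of $\Gamma$.

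\item Convert the resulting inequality $u_{\bar x,\lambda} \ge u\,(1 - O(r))$ for all admissible $\bar x, \lambda$ into \eqref{eq:asym}. Choosing $\bar x$ on the bisector of $x$ and $y$ within the normal plane, with $\lambda^2 = |\bar x - x|\,|\bar x - y|$ so that the inversion swaps $x$ and $y$, and letting $\bar x$ move away along the bisector, gives $u(y) \ge u(x)(1-O(r))$. Exchanging the roles of $x$ and $y$ finishes the proof. Uniformity in $z$ comes from the compactness and smoothness of $\Gamma$.
\end{enumerate}

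The main obstacle is step (iii). The reflected image of $\Gamma$ under inversion is not contained in $\Gamma$, so $w_\lambda$ may be singular at points where $u$ is regular. The sign at these points must come from the blow-up of $u_{\bar x,\lambda}$ itself, which needs a quantitative lower bound $u \gtrsim d^{-(n-2)/2}$ or at least $u\to\infty$ near $\Gamma$. We would first try to obtain such a lower bound from \eqref{eq:grow} via a comparison argument. If that fails, we would fall back on the capacity-zero maximum principle, controlling the error by the $O(r)$ deviation of $\Gamma$ from its tangent plane.
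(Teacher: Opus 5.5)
Your outline has the right ingredients in spirit, namely Kelvin transforms centered in the normal plane and the capacity-zero maximum principle, but several steps would fail as stated.

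\textbf{The comparison and its a priori input.} The comparison runs the wrong way. Outside $B_\lambda(\bar x)$ one must prove $u\ge u_{\bar x,\lambda}$, not $u_{\bar x,\lambda}\ge u$. For small $\lambda$ the Kelvin transform tends to $0$ on $B_\delta(\bar x)\setminus B_\lambda(\bar x)$, while (normalizing $\Omega=B_2$, $\Gamma\subset B_{1/2}$) $u\ge\inf_{\partial B_2}u>0$. So your step (ii) is false.

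With the correct direction and $\lambda<d(\bar x,\Gamma)$, the function $u_{\bar x,\lambda}$ is smooth and bounded on the comparison region, because the inverted set $\Gamma^{\bar x,\lambda}$ lies inside $B_\lambda(\bar x)$. So the ``reflected singular set'' obstacle you single out never arises. The only singularity is that of $u$ itself, which is harmless because $u-u_{\bar x,\lambda}$ is bounded below and ${\rm Cap}(\Gamma)=0$.

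The bound $u\le C\,d(\xi,\Gamma)^{-(n-2)/2}$ in step (i) is false under the hypotheses. For $f(t)=t^p$ with $\frac{n}{n-2}<p<\frac{n+2}{n-2}$ and $\Gamma=\{0\}$, the function $u=c_p|x|^{-2/(p-1)}$ is a solution, and $\frac{2}{p-1}>\frac{n-2}{2}$. For general $f$ there is also no Liouville theorem for blow-up limits. Nor is there a blow-up lower bound, since $u$ may be regular across $\Gamma$.

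The missing idea is the correct input. Proposition \ref{prop:blowup} gives $f(u)/u\le C\,d(x,\Gamma)^{-2}$. Together with \eqref{eq:grow} this forces $d(x,\Gamma)^{n-2}u(x)\to0$: if $d^{n-2}u>\varepsilon_0$ along a sequence, then $u^{-n/(n-2)}f(u)\le C\varepsilon_0^{-2/(n-2)}$ while $u\to+\infty$.

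This decay is exactly what controls a fixed outer boundary. Take $y\in\partial B_1$, $0<d(\bar x,\Gamma)\le\rho$ and $\lambda<d(\bar x,\Gamma)$. The point $y^{\bar x,\lambda}$ lies within $2\lambda^2$ of $\bar x$, so for $\rho$ small, $u_{\bar x,\lambda}(y)\le(2d(\bar x,\Gamma))^{n-2}u(y^{\bar x,\lambda})\le\frac12\inf_{\partial B_2}u\le u(y)$.

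With this boundary control, the argument of Theorem \ref{thm:sym-x1} goes through. Here $-\Delta(u-u_{\bar x,\bar\lambda})\ge0$ on $\{u<u(\cdot^{\bar x,\bar\lambda})\}$ by \eqref{asym-mono}. The result is $u\ge u_{\bar x,\lambda}$ on $B_1\setminus(B_\lambda(\bar x)\cup\Gamma)$ for every such $\bar x$ and every $\lambda<d(\bar x,\Gamma)$, with no curvature loss.

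\textbf{The conversion step and the source of the $O(r)$ rate.} An inversion centered at $\bar x$ maps $x$ to $y$ only if $\bar x,x,y$ are collinear with $x,y$ on the same side of $\bar x$. A center on the bisector of $x$ and $y$ gives nothing. The center also cannot be sent away: the inequality is only available for centers within $\rho$ of $\Gamma$, and a limit at infinity would give exact symmetry, which is false. If instead the centers sit at distance $O(r)$ from $z$, as in your step (iii), the factor $(\lambda/|y-\bar x|)^{n-2}$ stays bounded away from $1$, and you get only a Harnack inequality.

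The rate comes from a center at fixed distance. Let $x_1,x_2$ be the maximum and minimum points of $u$ on $\Pi_r^{-1}(z)$ and $e_i=x_i-z$. Take $x_3=x_1+\frac{\rho}{4}\frac{e_1-e_2}{|e_1-e_2|}$ and $\lambda^2=\frac{\rho}{4}(|e_1-e_2|+\frac{\rho}{4})$. Since $e_1-e_2\in(T_z\Gamma)^\perp$, one has $d(x_3,\Gamma)^2=|x_3-z|^2=r^2+\lambda^2$ and $x_2^{x_3,\lambda}=x_1$. Hence $u(x_2)\ge u_{x_3,\lambda}(x_2)\ge(1+8r/\rho)^{-(n-2)/2}u(x_1)$. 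The curvature of $\Gamma$ enters only through the tubular neighborhood, not through the error term.
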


For the case $\Gamma = \{ 0 \}$, the asymptotic symmetry was established by Caffarelli, Gidas and Spruck in their pioneering work \cite{CGS}. More precisely, they assumed that $f$ is a nonnegative locally Lipschitz function on $[0, + \infty)$ satisfying
\begin{enumerate}[label = (\arabic*)]
\item $f(t)$ is nondecreasing on $[0, + \infty)$, $f(0) = 0$;

\item $t^{ - \frac{n + 2}{n - 2} } f(t)$ is nonincreasing on $(0, + \infty)$;

\item $f(t) \geq c t^p$ for some $p \geq \frac{n}{n - 2}$ and for all sufficiently large $t$,
\end{enumerate}
and then proved that any solution is asymptotically symmetric near the origin, that is,
\begin{equation}\label{eq:asym-Ox}
u(x) = {\bar u} (|x|) (1 + O(|x|)) ~~~~~~ \textmd{as} ~ x \to 0,
\end{equation}
where ${\bar u} (r) = \fint_{ \Sp^{n - 1} } u(r \theta) \,{\rm d}S$ is the spherical average of $u$. Li \cite{CLi96} then simplified the method of Caffarelli-Gidas-Spruck and proved the asymptotic symmetry $u(x) = {\bar u} (|x|) (1 + o(1))$ as $x \to 0$, under the assumption that $f$ is a nonnegative locally Lipschitz function satisfying condition (2) only. In \cite{KMPS}, Korevaar, Mazzeo, Pacard and Schoen also developed an alternative approach to study the isolated singularities of the Yamabe equation. When $\Gamma$ is a smooth closed manifold of dimension $\leq n - 2$, Chen and Lin \cite{CLin95,CLin01} proved, under assumptions similar to those in Theorem \ref{thm:asym}, that any positive solution $u$ of \eqref{eq:main} satisfies
\begin{equation}\label{eq:asym-o1}
u(x) = u(y) (1 + o(1)) ~~~~~~ \textmd{for any} ~ x, y \in \Pi_r^{- 1} (z),
\end{equation}
where $o(1) \to 0$ as $r \to 0^+$. Our Theorem \ref{thm:asym} refines the result of Chen-Lin \cite{CLin95,CLin01} by enhancing the $o(1)$ remainder term to $O(|x|)$. This improvement is particularly useful for characterizing the precise asymptotic behavior of solutions near the singular set $\Gamma$. The proof by Chen and Lin \cite{CLin95,CLin01} is primarily based on the method of contradiction, which seems to make it difficult to directly derive the remainder term $O(|x|)$. On the other hand, due to the contradiction argument, Chen-Lin \cite{CLin01} only requires $t^{ - \frac{n + 2}{n - 2} } f(t)$ to be nonincreasing for large $t$.

When $\Gamma = \{ 0 \}$ is an isolated singularity, the condition \eqref{eq:grow} in Theorem \ref{thm:asym} can be removed, leading to the following asymptotic symmetry. This refinement strengthens the result of Li \cite{CLi96} by eliminating the Lipschitz assumption on $f$ and weakening the monotonicity assumption on $t^{ - \frac{n + 2}{n - 2} } f(t)$.

\begin{theorem}\label{thm:asym-0}
Let $\Omega \subset \R^n$ be a bounded domain containing the origin. Suppose that $f : (0, + \infty) \to (0, + \infty)$ is locally bounded and satisfies
\begin{equation}\label{eq:mono-0}
t^{ - \frac{n + 2}{n - 2} } f(t) ~ \textmd{is nonincreasing for} ~ t ~ \textmd{sufficiently large}.
\end{equation}
Let $u \in C^2 (\Omega \setminus \{ 0 \})$ be a positive solution of
\begin{equation}\label{eq:main-0}
- \Delta u = f(u) ~~~~~~ \textmd{in} ~ \Omega \setminus \{ 0 \}.
\end{equation}
Then the limit
\begin{equation}\label{eq:C0}
\lim_{x \to 0} |x|^{n - 2} u(x) = C_0 ~~~ \textmd{exists and is nonnegative}.
\end{equation}
Furthermore, if $C_0 = 0$, then $u$ is asymptotically symmetric as $x$ tends to $0$, i.e.,
\begin{equation}\label{eq:asym-0}
u(x) = {\bar u} (|x|) (1 + O(|x|)) ~~~~~~ \textmd{as} ~ x \to 0,
\end{equation}
where ${\bar u} (r) = \fint_{ \Sp^{n - 1} } u(r \theta) \,{\rm d}S$ is the spherical average of $u$.
\end{theorem}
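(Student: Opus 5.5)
The proof has two parts. First we show that the limit \eqref{eq:C0} exists, using superharmonicity. Then, when $C_0 = 0$, we prove \eqref{eq:asym-0} by a quantitative moving sphere argument carried out directly in $\Omega$, without first rescaling to a global problem.

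\textbf{Step 1: existence of $C_0$.} Since $f > 0$, $u$ is a positive superharmonic function in $B_{r_0} \setminus \{0\}$. A point has zero capacity, so $u$ extends to a superharmonic function on $B_{r_0}$. By the Riesz decomposition (equivalently, the classical Brezis--Lions lemma), $f(u) \in L^1(B_{r_0})$ and
$$
-\Delta u = f(u) + c\,\delta_0 \quad \textmd{in } \mathcal{D}'(B_{r_0}), \qquad c \geq 0 .
$$
Write $u = c\,\Gamma_n + N[f(u)] + h$, where $\Gamma_n$ is the fundamental solution, $N$ is the Newtonian potential on $B_{r_0}$, and $h$ is harmonic. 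We claim $|x|^{n-2} N[f(u)](x) \to 0$. If $c > 0$, then $u \geq c'|x|^{2-n}$ near $0$. Combining $f(u)\in L^1$ with the monotonicity \eqref{eq:mono-0}, which gives $f(t) \le C t^{\frac{n+2}{n-2}}$ for large $t$, we argue as follows. Split the potential into the parts over $|y| < |x|/2$, $|y - x| < |x|/2$, and the remainder. The outer and far pieces are $o(|x|^{2-n})$ by absolute continuity of the integral. The near-diagonal piece is handled by a local Harnack/De Giorgi bound on the annulus combined with the growth bound. Hence $C_0 = c\,c_n$ exists. If $c = 0$, the same splitting shows $|x|^{n-2}u \to 0$.

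\textbf{Step 2: moving spheres when $C_0 = 0$.} Fix $x_0$ with $|x_0| = r$ small. For $\lambda$ in a range comparable to $r$, consider the Kelvin transform about $x_0$,
$$
u_{x_0,\lambda}(y) = \Bigl(\frac{\lambda}{|y - x_0|}\Bigr)^{n-2} u\Bigl(x_0 + \frac{\lambda^2 (y - x_0)}{|y - x_0|^2}\Bigr).
$$
Because $t^{-\frac{n+2}{n-2}}f(t)$ is nonincreasing once $u$ is large (and $u \to \infty$ near $0$ in the relevant regime), the difference $w = u - u_{x_0,\lambda}$ satisfies a differential inequality of the right sign wherever $w < 0$. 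One technical point must be checked: on the region that matters both $u$ and $u_{x_0,\lambda}$ are large, so only the large-$t$ monotonicity is needed.

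We want to show $w \geq 0$ in $B_{\sigma}(0) \setminus (B_\lambda(x_0) \cup \{0\})$ for all $\lambda < |x_0|$ up to a fixed small scale, with $\sigma$ fixed. The boundary comparison on $\partial B_\sigma$ is where $C_0 = 0$ is used. There $u_{x_0,\lambda} \le (\lambda/\sigma)^{n-2}\sup u$ near $x_0$. The key point is that $u$ must be bounded below by a quantity exceeding $C\lambda^{n-2}$ times $u$ at the reflected points, which lie within distance $\sim\lambda^2/\sigma$ of $x_0$. To arrange this we choose $\lambda \le \varepsilon$ appropriately and use a Harnack inequality on annuli, $\sup_{|y|=s} u \le C \inf_{|y|=s} u$. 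This annular Harnack inequality follows from the equation together with the upper bound $u \le C|x|^{-\frac{n-2}{2}}$, which we obtain by a standard blow-up/doubling argument using the subcritical-to-critical monotonicity. Near the singularity, the origin is not a point where $w$ could attain a negative minimum, since $u$ is superharmonic and positive.

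\textbf{Step 3: conclusion.} From $u \geq u_{x_0,\lambda}$ for all $\lambda$ up to $c|x_0|$ and all $x_0$, the standard calculus lemma of Li and Zhu yields: for $|x| = |y| = r$,
$$
u(x) \le u(y) + C r \sup_{|z|\sim r} |\nabla u| \cdot \frac{r}{\textmd{scale}} .
$$
More directly, comparing $u$ at two points on the sphere $|x| = r$ via spheres centered at points $x_0$ placed at distance about $1/\epsilon$ scale gives $u(x) \le u(y)(1 + Cr)$. The argument runs as follows. Take $x_0 = -R\,e$, with $R$ fixed, in the direction making the sphere pass between $x$ and $y$. The inversion maps $y$ to a point within $O(r^2)$ of $x$, and the conformal factor equals $1 + O(r)$. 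Combined with the gradient estimate $|\nabla u| \le C u/r$, this gives $u(x) \le (1 + Cr)u(y)$. Symmetrizing in $x,y$ and averaging over $y$ yields \eqref{eq:asym-0}.

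\textbf{Main obstacle.} The hardest step is the boundary/starting comparison in Step 2 with $x_0$ at a fixed distance, as required to get an $O(r)$ rather than $o(1)$ error. It requires showing that $u \geq u_{x_0,\lambda}$ near the origin itself. Here we would first try to use that the Kelvin image of $0$ is a regular point where $u_{x_0,\lambda}$ is finite, while $u \to \infty$ at $0$ whenever the singularity is non-removable. If the singularity is removable, then $u$ is bounded and smooth, so the estimate is trivial.
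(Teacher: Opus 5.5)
Your overall architecture matches the paper's. You use a decomposition for $C_0$, then moving spheres centred at $|x_0|\le\rho$ with $\lambda$ up to $|x_0|$: the outer sphere is handled by $C_0=0$, and the puncture by $u\to\infty$ while $u_{x_0,\lambda}$ stays bounded there. Then you compare the maximum and minimum of $u$ on $\partial B_r$. Your Brezis--Lions/Riesz start is a legitimate substitute for the paper's concavity of $A(s)$ plus B\^ocher.

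The gap is in the ingredient everything rests on, the annular Harnack inequality $\max_{|x|=r}u\le C\min_{|x|=r}u$. You derive it from $u\le C|x|^{-\frac{n-2}{2}}$, which is false under these hypotheses. For $f(t)=t^p$ with $\frac{n}{n-2}<p<\frac{n+2}{n-2}$, the function $u=c_p|x|^{-2/(p-1)}$, with $c_p^{p-1}=\frac{2}{p-1}\bigl(n-2-\frac{2}{p-1}\bigr)$, is a solution with $C_0=0$ and $\frac{2}{p-1}>\frac{n-2}{2}$. The bound also cannot hold in your case $c>0$, where $u\ge c'|x|^{2-n}$, although you invoke annular Harnack there too. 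The correct scale-invariant estimate is $f(u)/u\le C|x|^{-2}$ (Proposition~\ref{prop:blowup}); this is exactly what the Harnack inequality for $\Delta v_r+r^2\frac{f(v_r)}{v_r}v_r=0$ on $B_2\setminus B_{1/2}$ requires.

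Its proof is not a routine doubling argument here. Because $f$ is merely locally bounded and $t^{-\frac{n+2}{n-2}}f(t)$ is monotone only for large $t$, blow-up limits satisfy no equation and no classification is available. The paper proceeds as follows:
\begin{itemize}
\item it relocates the blow-up points so the limit is nonconstant;
\item it uses $f(u)\in L^1$ to get $u(z_j)^{-\frac{n}{n-2}}f(u(z_j))\to\infty$, which controls the outer boundary;
\item it runs moving spheres on the rescaled $w_j$, with the barrier $\varphi_j$ controlling the set where $u<t_0$;
\item it concludes via Li--Zhang that the limit is constant, a contradiction.
\end{itemize}
Without this estimate you have neither the pointwise limit \eqref{eq:C0}, nor $u\to\infty$ at $0$ (needed so that only large-$t$ monotonicity is used), nor the bound $|\nabla u|\le Cu/|x|$ that your Step~3 uses.

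The near-diagonal term in Step~1 also fails as described. Even with $C_1|x|^{2-n}\le u\le C_2|x|^{2-n}$, the growth bound $f(t)\le Ct^{\frac{n+2}{n-2}}$ only gives $f(u)\le C|x|^{-n-2}$, hence $O(|x|^{-n})$ on $\{|y-x|<|x|/2\}$. You need the monotonicity itself. From $u(y)\ge C_1|y|^{2-n}$ you get $f(u(y))\le (C_2/C_1)^{\frac{n+2}{n-2}}f(C_1|y|^{2-n})$, a radial majorant. It is integrable: the reverse inequality bounds $f(C_2|y|^{2-n})$ by $Cf(u(y))$, and the two radial functions differ by a dilation. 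The potential of a radial $L^1$ density is $o(|x|^{2-n})$, since $\int_{\Sp^{n-1}}|x-r\theta|^{2-n}\,{\rm d}S=|\Sp^{n-1}|\max\{|x|,r\}^{2-n}$. For $c=0$ the same splitting does not suffice either: pass to spherical averages and return to $u$ via Harnack.

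Two smaller points:
\begin{itemize}
\item Where $w<0$ one only gets $-\Delta w\ge c(y)w$ with $c\ge 0$, the wrong sign for a maximum principle. The argument must be run at the critical radius on $\{u(y)<u(y^{x_0,\bar\lambda})\}$, where $w\ge 0$ makes $w$ superharmonic.
\item Step~3 needs $\lambda$ arbitrarily close to $|x_0|$, not just up to $c|x_0|$.
\end{itemize}
With these repaired, your approximate-inversion ending works. The paper avoids the gradient bound by centring at $x_3=x_1+\frac{\rho}{4}\frac{x_1-x_2}{|x_1-x_2|}$, beyond the maximum point, with $\lambda^2=|x_3-x_1|\,|x_3-x_2|<|x_3|^2$, so that $x_2$ inverts exactly onto $x_1$.
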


\begin{remark}\label{rem:strong-sol}
Theorems \ref{thm:sym-x1}, \ref{thm:asym} and \ref{thm:asym-0} also hold for singular strong solutions of \eqref{eq:main}. A function $u$ is said to be a strong solution of \eqref{eq:main} if $u \in W_{\rm loc}^{2, p} (\Omega \setminus \Gamma)$ for any $p > 1$ and $u$ satisfies \eqref{eq:main} almost everywhere in $\Omega \setminus \Gamma$. By the Sobolev embedding theorems, $u \in C_{\rm loc}^{1, \alpha} (\Omega \setminus \Gamma)$ for any $\alpha \in (0, 1)$.
\end{remark}

The aforementioned articles \cite{CGS,CLin95,CLi96,CLin01,Sci17,EFS} mainly use the moving plane method to establish symmetry or asymptotic symmetry for singular solutions. We will employ an improved moving sphere method to prove Theorems \ref{thm:sym-x1}, \ref{thm:asym} and \ref{thm:asym-0}. The method of moving spheres, developed by Chen-Li \cite{CLi95}, Li-Zhu \cite{LZhu95} and Li-Zhang \cite{LZhang03}, has become a powerful tool in the study of various problems. For special nonlinear terms of type $f(t) = t^\frac{n + 2}{n - 2}$, singular solutions of Yamabe-type equations have been extensively investigated using this method, e.g., see \cite{Ado,CJSX,CdoS,DY,GKS,HXZ,JX,JY,Li06,XZ,Zhang02} and the references therein. Since our nonlinearity $f$ lacks Lipschitz regularity and is not of power type, while \eqref{eq:mono-0} holds only for large $t$, we have to refine some arguments in the process of moving spheres. The difficulties mainly include the following two aspects. On the one hand, when $f(t) = t^p$, the blow-up limiting equation of \eqref{eq:main} is
$$
- \Delta u = u^p ~~~~~~ \textmd{in} ~ \R^n,
$$
and all positive solutions to this equation have been completely classified (see \cite{CGS}). However, for a general $f(t)$, the blow-up limit does not satisfy any specific equation, and thus there are no classification results available. On the other hand, since \eqref{eq:mono-0} holds only for large $t$, the difference between $u$ and its Kelvin transform $u_{x, \bar\lambda}$
$$
u - u_{x, \bar\lambda} ~~~~~~ \textmd{in} ~ D \setminus (B_{\bar\lambda} (x) \cup \Gamma)
$$
is not superharmonic in a certain region $D$ like $\{ y : u(y) < u(y^{x, \bar\lambda}) \}$, and the maximum principle cannot be directly applied, where $\bar\lambda$ is the stopping position when moving spheres. Hence, we need to construct elaborate auxiliary functions in appropriate regions to apply the moving sphere method.

This paper is organized as follows. In Section \ref{sec:Pre}, we review the definition of Newtonian capacity and present several maximum principles that will be used later. In Section \ref{sec:Sym-x1}, we demonstrate the symmetry and monotonicity properties stated in Theorem \ref{thm:sym-x1}. In Section \ref{sec:Asym}, we establish the asymptotic symmetries in Theorems \ref{thm:asym} and \ref{thm:asym-0} using the improved method of moving spheres.

\section{Preliminaries}\label{sec:Pre}

Throughout the paper, we always assume the dimension $n \geq 3$. We denote by $B_r (x)$ the open ball in $\R^n$ of radius $r$ centered at $x$, and write $B_r (0)$ as $B_r$ for short. We also denote by $|B_1|$ the volume of $B_1$. First we review the definition of Newtonian capacity.

\begin{definition}\label{def:Cap}
Let $\Omega$ be an open set of $\R^n$ and $K$ be a compact subset of $\Omega$. The capacity of $K$ relative to $\Omega$ is defined by
$$
{\rm Cap} (K, \Omega) = \inf \bigg\{ \int_\Omega |\nabla u|^2 \,{\rm d}x : u \in C_c^\infty (\Omega),~ u \geq 1 ~ \textmd{on} ~ K \bigg\}.
$$
If $U \subset \Omega$ is open, define
$$
{\rm Cap} (U, \Omega) = \sup \{ {\rm Cap} (K, \Omega) : K \subset U ~ \textmd{compact} \},
$$
and for an arbitrary set $E \subset \Omega$,
$$
{\rm Cap} (E, \Omega) = \inf \{ {\rm Cap} (U, \Omega) : E \subset U \subset \Omega ~ \textmd{open} \}.
$$
\end{definition}

\begin{definition}\label{def:Cap0}
A set $E \subset \R^n$ is said to have capacity zero if
$$
{\rm Cap} (E \cap \Omega, \Omega) = 0 ~~~~~~ \textmd{for all open} ~ \Omega \subset \R^n.
$$
In this case, we write ${\rm Cap} (E) = 0$.
\end{definition}

The following lemma can be found in Lemma 2.9 of Heinonen-Kilpel\"ainen-Martio \cite{HKM}.

\begin{lemma}\label{lem:Cap0}
Suppose that $E$ is bounded and there is a bounded neighborhood $\Omega$ of $E$ with ${\rm Cap} (E, \Omega) = 0$. Then $E$ has capacity zero.
\end{lemma}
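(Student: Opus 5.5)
The plan is to prove Lemma~\ref{lem:Cap0} in two steps: first show that ${\rm Cap}(E, \Omega') = 0$ for every open $\Omega'$ containing $\bar\Omega$, by comparing capacities relative to different open sets; then pass to arbitrary open $\Omega'$ using monotonicity and localization. The basic tool is the behaviour of capacity on compact sets. For compact $K \subset U_1 \subset U_2$ we have ${\rm Cap}(K, U_2) \le {\rm Cap}(K, U_1)$, since $C_c^\infty(U_1) \subset C_c^\infty(U_2)$. By the definitions this monotonicity extends to open sets and then to arbitrary sets. Enlarging the ambient domain therefore only decreases capacity, and the real content is the reverse direction: shrinking the ambient domain to a small neighbourhood of $E$.

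\textbf{Step 1: localization.} Suppose ${\rm Cap}(E, \Omega) = 0$ and let $D$ be any open set with $E \subset D$. We want ${\rm Cap}(E \cap D, D) = 0$.

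Fix $\epsilon > 0$ and a compact set $F$ with $E \subset F$, chosen so that $F \subset \Omega$, and also $F \subset D$ when $E$ has positive distance from $\partial D$. In general $E$ need not be compactly contained in $D$; this is handled in Step 2. Take a cutoff $\eta \in C_c^\infty(\Omega \cap D)$ with $\eta = 1$ on a neighbourhood $V$ of $F$.

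Since ${\rm Cap}(E, \Omega) = 0$, there is an open $U$ with $E \subset U \subset V$ and ${\rm Cap}(U, \Omega) < \epsilon$. For every compact $K \subset U$ there is $\varphi \in C_c^\infty(\Omega)$ with $\varphi \ge 1$ on $K$ and $\int |\nabla \varphi|^2 < \epsilon$. Truncating, we may assume $0 \le \varphi \le 1$; this needs mollification of $\min(\max(\varphi,0),1)$, and by the standard equivalent definition with Lipschitz test functions it costs nothing. Then $\eta\varphi \in C_c(D)$ is admissible for $K$ relative to $D$, with
$$
\int |\nabla(\eta\varphi)|^2 \le 2\int |\nabla \varphi|^2 + 2\|\nabla \eta\|_\infty^2 \int_{\Omega} \varphi^2 .
$$
The zero-order term is controlled by the Poincar\'e (or Sobolev) inequality on the bounded set $\Omega$: $\int_\Omega \varphi^2 \le C(\Omega)\int |\nabla \varphi|^2 < C\epsilon$. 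Hence ${\rm Cap}(K, D) \le C'\epsilon$ for every compact $K \subset U$, so ${\rm Cap}(U \cap D, D) \le C'\epsilon$, and therefore ${\rm Cap}(E, D) = 0$.

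\textbf{Step 2: arbitrary open $\Omega'$.} Now take any open $\Omega' \subset \R^n$. Cover $E \cap \Omega'$ by countably many balls $B_j \Subset \Omega' \cap \Omega$; this works because $E \subset \Omega$ and $E \cap \Omega'$ is relatively open-coverable. Applying Step 1 in each $B_j$, with the bounded domain $\Omega$ (or $\Omega \cap 2B_j$) in the comparison, gives ${\rm Cap}(E \cap B_j, \Omega') = 0$ for each $j$. Countable subadditivity of the outer capacity ${\rm Cap}(\cdot, \Omega')$ then yields ${\rm Cap}(E \cap \Omega', \Omega') = 0$.

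\textbf{Main obstacle.} I expect the main difficulty to be technical rather than conceptual: establishing countable subadditivity and the Choquet-type regularity of the set function from Definition~\ref{def:Cap}. These require the standard argument with $\max(\varphi_1, \varphi_2, \dots)$ in the Sobolev space, plus approximation back to $C_c^\infty$. I would cite these standard properties from Heinonen--Kilpel\"ainen--Martio rather than reprove them. The Poincar\'e step is where boundedness of $\Omega$ is essential, and it is the only place that hypothesis enters.
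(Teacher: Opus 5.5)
Your proof is correct. The paper gives no argument of its own for this lemma; it simply quotes Lemma~2.9 of Heinonen--Kilpel\"ainen--Martio. What you wrote is a self-contained replacement for that citation, and it is the standard argument:
\begin{itemize}
\item cut off with $\eta$ around a ball compactly inside $\Omega\cap\Omega'$;
\item use the Poincar\'e inequality on the bounded set $\Omega$ to absorb the term $\int\varphi^2|\nabla\eta|^2$;
\item sum over countably many balls using the countable subadditivity of ${\rm Cap}(\cdot,\Omega')$ from HKM.
\end{itemize}
The paper itself relies on these same Choquet-type properties elsewhere, so citing them is fine. The citation buys brevity. Your version shows exactly where the boundedness of $\Omega$ enters.

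A few points of hygiene, none of them a gap.

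First, Step~1 as stated asks for a compact $F$ with $E\subset F\subset\Omega$. Such an $F$ need not exist: take $E$ a countable dense subset of $\Omega$. Step~1 should be formulated only for sets with $\bar E\subset\Omega\cap D$. That is exactly how you use it in Step~2, applied to $E\cap B_j$ with $\bar B_j\subset\Omega\cap\Omega'$ and ${\rm Cap}(E\cap B_j,\Omega)\le{\rm Cap}(E,\Omega)=0$. For the same reason, the parenthetical alternative ``$\Omega\cap 2B_j$'' as comparison domain should be dropped: only ${\rm Cap}(\cdot,\Omega)=0$ is known a priori.

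Second, the truncation to $0\le\varphi\le 1$ is unnecessary, since Poincar\'e bounds $\int_\Omega\varphi^2$ directly. Without it, $\eta\varphi\in C_c^\infty(D)$ (not merely $C_c(D)$) is directly admissible, and you avoid the detour through Lipschitz test functions.

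Third, in the present setting ($p=2$, $n\ge 3$) boundedness of $\Omega$ is not actually essential. The Sobolev inequality gives $\int_{{\rm supp}\,\eta}\varphi^2\le|{\rm supp}\,\eta|^{2/n}S^2\int|\nabla\varphi|^2$ for all $\varphi\in C_c^\infty(\R^n)$, with $S$ the Sobolev constant. The hypothesis becomes indispensable only in HKM's general $p$-setting, for example $p\ge n$, where every compact set has zero capacity relative to $\R^n$.
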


Notice that if $\Gamma \subset B_1$ is a smooth $k$-dimensional closed manifold with $k \leq n - 2$, then $\Gamma$ has capacity zero. More properties of capacity can be found in Heinonen-Kilpel\"ainen-Martio \cite{HKM} and Mal\'y-Ziemer \cite{MZ97}. We need the following maximum principle for superharmonic functions, which is a slight modification of Chen-Lin \cite[Lemma 2.1]{CLin95}.

\begin{lemma}\label{lem:suphar-int}
Let $\Omega$ be a smooth bounded domain in $\R^n$, and $\Gamma$ be a closed set in $\R^n$ with ${\rm Cap} (\Gamma) = 0$. Assume that $u \in C(\bar\Omega \setminus \Gamma)$ is superharmonic in $\Omega \setminus \Gamma$ in the distributional sense, and
$$
\inf_{\bar\Omega \setminus \Gamma} u > - \infty.
$$
Then
$$
u(y) \geq \inf_{\partial\Omega \setminus \Gamma} u
$$
for all $y \in \bar\Omega \setminus \Gamma$.
\end{lemma}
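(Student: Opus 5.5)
Set $m := \inf_{\partial\Omega \setminus \Gamma} u$. If $m = -\infty$ there is nothing to prove, so assume $m > -\infty$. Put $v := u - m$. Then $v$ is superharmonic in $\Omega \setminus \Gamma$, bounded below on $\bar\Omega \setminus \Gamma$ by some constant $-M$ with $M \ge 0$, and $v \ge 0$ on $\partial\Omega \setminus \Gamma$. The goal is to show $v \ge 0$ on $\bar\Omega \setminus \Gamma$. The plan is to absorb the bad set $\Gamma$ with a family of superharmonic barriers that are large near $\Gamma$ but have arbitrarily small effect away from $\Gamma$, which is exactly what ${\rm Cap}(\Gamma) = 0$ provides.

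First, the capacity input. Since $\Gamma \cap \bar\Omega$ is compact with capacity zero, for every $\varepsilon > 0$ there is an open set $U_\varepsilon \supset \Gamma \cap \bar\Omega$ and a nonnegative superharmonic function $w_\varepsilon$ on $\R^n$ (the capacitary potential, or equilibrium potential, of a neighborhood of $\Gamma \cap \bar\Omega$) with $w_\varepsilon \ge 1$ on $U_\varepsilon$. By lower semicontinuity we may shrink $U_\varepsilon$ to an open neighborhood on which $w_\varepsilon \ge 1$ still holds. In addition, $\int |\nabla w_\varepsilon|^2 < \varepsilon$, and $w_\varepsilon \to 0$ quasi-everywhere, and in particular almost everywhere, along a sequence. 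Equivalently one can take $w_\varepsilon = \int \Phi(x-y)\,d\mu_\varepsilon(y)$ with $\Phi$ the fundamental solution and $\mu_\varepsilon$ of small total mass. I will quote these standard facts from Heinonen--Kilpel\"ainen--Martio, together with Lemma \ref{lem:Cap0} to pass between relative and absolute capacity.

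Next, fix $\varepsilon$ and consider $v_\varepsilon := v + M w_\varepsilon$ on $\Omega \setminus \bar U_\varepsilon$, shrinking $U_\varepsilon$ if necessary so that its closure avoids the portion of $\partial\Omega$ where we need boundary control. The function $v_\varepsilon$ is superharmonic there, being a sum of superharmonic functions; for $v$ this holds in the distributional sense, and since $v$ is continuous it is superharmonic in the classical sense. On $\partial\Omega \setminus U_\varepsilon$ we have $v_\varepsilon \ge v \ge 0$. On $\partial U_\varepsilon \cap \Omega$, or in the $\liminf$ sense approaching $U_\varepsilon$, we have $v_\varepsilon \ge -M + M = 0$. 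The classical minimum principle on the open set $\Omega \setminus \bar U_\varepsilon$, applied in the lower semicontinuous $\liminf$ form, then gives $v_\varepsilon \ge 0$, that is,
\[
v \ge -M w_\varepsilon \quad \text{in } \Omega \setminus U_\varepsilon .
\]

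Finally, let $\varepsilon \to 0$. For a fixed point $y \in \bar\Omega \setminus \Gamma$ we can choose $U_\varepsilon$ to shrink to $\Gamma$, so that $y \notin U_\varepsilon$ eventually, and along a subsequence $w_\varepsilon \to 0$ almost everywhere. Hence $v \ge 0$ a.e., and by the continuity of $v$ off $\Gamma$ this gives $v(y) \ge 0$ everywhere on $\bar\Omega \setminus \Gamma$, which is the claim.

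The main obstacle is the barrier step. One needs potentials $w_\varepsilon$ that are at least $1$ on a full neighborhood of $\Gamma \cap \bar\Omega$, which is essential for the comparison, yet tend to $0$ at the points of interest. If pointwise convergence is troublesome, the fallback is to argue via the mean value property instead. Superharmonicity of $v$ gives $v(y) \ge \fint_{B_\rho(y)} v$ for small $\rho$, and $\fint_{B_\rho(y)} w_\varepsilon \to 0$ follows from the Sobolev/Poincar\'e bound $\|w_\varepsilon\|_{L^{2n/(n-2)}} \le C \|\nabla w_\varepsilon\|_{L^2} < C\sqrt{\varepsilon}$. This yields $v(y) \ge 0$ directly.
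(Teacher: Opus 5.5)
Your argument is correct, but it is not the paper's route. The paper's proof is a one-line reduction. It replaces $u$ by the nonnegative function $u-\inf_{\bar\Omega\setminus\Gamma}u$ and invokes Chen--Lin's maximum principle (Lemma 2.1 of their 1995 Duke paper) on $\bar\Omega$ minus the compact capacity-zero set $\Gamma\cap\bar\Omega$. The lower bound serves only to put $u$ in the nonnegative setting of that lemma.

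You instead shift by the boundary infimum $m$ and in effect reprove the cited lemma:
\begin{itemize}
\item the lower bound $-M$ becomes the height of the capacitary barrier $Mw_\varepsilon$;
\item the classical minimum principle is applied on $\Omega\setminus\bar U_\varepsilon$, which no longer meets $\Gamma$;
\item $\mathrm{Cap}(\Gamma)=0$ is converted into $w_\varepsilon\to0$ off $\Gamma$, through the Sobolev inequality or your mean-value fallback.
\end{itemize}
The paper's version is shorter but treats the key fact as a black box. Yours is self-contained modulo standard potential theory: superharmonic capacitary potentials with energy equal to capacity, the Sobolev inequality, and the minimum principle for lower semicontinuous superharmonic functions. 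It also shows exactly where each hypothesis enters, including when $\Gamma\cap\partial\Omega\neq\emptyset$.

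A few details to tidy.
\begin{itemize}
\item Lower semicontinuity by itself does not give $\liminf w_\varepsilon\ge1$ when approaching $\partial U_\varepsilon$ from outside. Take $w_\varepsilon$ to be the capacitary potential of an open set $V_\varepsilon\supset K:=\Gamma\cap\bar\Omega$ of small capacity, and choose $U_\varepsilon$ with $K\subset U_\varepsilon\subset\bar U_\varepsilon\subset V_\varepsilon$. Then $w_\varepsilon\equiv1$ near $\bar U_\varepsilon$.
\item Do not try to keep $\bar U_\varepsilon$ away from $\partial\Omega$. If $\Gamma$ meets $\partial\Omega$ this is impossible, and it is unnecessary because those boundary points are controlled by the barrier.
\item Optionally, also require $V_\varepsilon\subset\{d(\cdot,K)<\varepsilon\}$, which only lowers capacity, and write $w_\varepsilon$ as the Newtonian potential of the equilibrium measure. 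Then $w_\varepsilon(y)\le C\,d(y,K)^{2-n}\,\mathrm{Cap}(V_\varepsilon)\to0$ for every fixed $y\notin K$, which removes the subsequence and density step at the end.
\end{itemize}
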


\begin{proof} Note that $\Gamma \cap \bar\Omega$ is a compact set with ${\rm Cap} (\Gamma \cap \bar\Omega) = 0$. Applying \cite[Lemma 2.1]{CLin95} to the function $(u - \inf_{\bar\Omega \setminus \Gamma} u)$ on $\bar\Omega \setminus (\Gamma \cap \bar\Omega)$, we can obtain the desired result.
\end{proof}

We also need the following maximum principle for second-order elliptic equations with a lower-order term, which can be found in Chen-Lin \cite[Lemma 4.4]{CLin95}.

\begin{lemma}\label{lem:supsol-int}
Let $\Omega$ be a smooth bounded domain in $\R^n$, and $\Gamma$ be a closed set in $\R^n$ with ${\rm Cap} (\Gamma) = 0$. Assume that $u \in C(\bar\Omega \setminus \Gamma)$ is a nonnegative function, $c(\cdot)$ is a measurable function in $\bar\Omega \setminus \Gamma$ satisfying
$$
\sup_{\bar\Omega \setminus \Gamma} c(y) < + \infty,
$$
and
$$
- \Delta u + c u \geq 0 ~~~~~~ \textmd{in} ~ \Omega \setminus \Gamma
$$
in the distributional sense. Then there exists a constant $0 < C \leq 1$ such that
$$
u(y) \geq C \inf_{\partial\Omega \setminus \Gamma} u
$$
for all $y \in \bar\Omega \setminus \Gamma$.
\end{lemma}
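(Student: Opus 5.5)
The idea is to reduce Lemma \ref{lem:supsol-int} to Lemma \ref{lem:suphar-int} by absorbing the zeroth-order term with a positive multiplier. Set $M = \max\{\sup_{\bar\Omega \setminus \Gamma} c, 0\}$, which is finite by assumption. Since $u \geq 0$, we have $c u \leq M u$. Hence
$$
- \Delta u + M u \geq - \Delta u + c u \geq 0 ~~~~~~ \textmd{in} ~ \Omega \setminus \Gamma
$$
in the distributional sense. It therefore suffices to treat the constant coefficient $M \geq 0$.

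Next I choose a positive function $h$ on $\bar\Omega$ with $-\Delta h - M h \ge 0$ in $\R^n$ and $h$ comparable to $1$ on $\bar\Omega$. A concrete choice is $h(y) = \cosh(\mu y_1)$ with $\mu$ small. Then $-\Delta h - M h = -(\mu^2 + M)\cosh(\mu y_1) < 0$, which has the wrong sign. So I use instead $h(y) = \cos(\mu (y_1 - a))$, where $a$ is the midpoint of the projection of $\Omega$ onto the $y_1$-axis and $\mu$ is chosen with $\mu^2 \geq M$ and $\mu \,\textmd{diam}(\Omega) < \pi/2$. Both conditions hold when $M$ is small, but in general they are incompatible.

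The fix is the usual one: instead of trying to make $h$ a supersolution, write $u = h w$ with $h > 0$ smooth chosen so that the equation for $w$ has a favourable structure. Write $u = h w$ with $h = \cosh(\mu (y_1-a))$ and $\mu = \sqrt{M}$. Then $-\Delta h + M h = 0$, and the drift identity gives
$$
-\Delta(hw) + M h w = h\big(-\Delta w - 2 \nabla \log h \cdot \nabla w\big).
$$
So $w$ is a distributional supersolution of $Lw := -\Delta w - 2\nabla\log h\cdot\nabla w \ge 0$, an operator without zeroth-order term. Moreover $w \ge 0$ is bounded below. I expect the capacity-zero maximum principle (the proof of Lemma \ref{lem:suphar-int}, i.e. \cite[Lemma 2.1]{CLin95}) to extend to such drift operators with smooth bounded coefficients. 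Granting that, $w \ge \inf_{\partial\Omega\setminus\Gamma} w$, and hence
$$
u \geq \frac{\min_{\bar\Omega} h}{\max_{\bar\Omega} h}\inf_{\partial\Omega\setminus\Gamma}u .
$$
This gives the lemma with $C = \min h/\max h = 1/\cosh(\sqrt{M}\,\textmd{diam}\,\Omega) \in (0,1]$.

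The main obstacle is justifying the maximum principle across $\Gamma$ for $w$. I would redo the capacity argument directly. Take cutoffs $\eta_\varepsilon \in C_c^\infty$ with $\eta_\varepsilon = 1$ near $\Gamma \cap \bar\Omega$ and $\|\nabla \eta_\varepsilon\|_{L^2} \to 0$, which exist since ${\rm Cap}(\Gamma) = 0$. Set $m = \inf_{\partial\Omega\setminus\Gamma} u$, assume $m > 0$, and let $\delta < C m$. Test the inequality $-\Delta u + M u \ge 0$ against $(1-\eta_\varepsilon)^2 (C^{-1}\cdots)$. A simpler route is to test against $(\delta h/\max h - u)^+ (1-\eta_\varepsilon)^2$, which vanishes near $\partial\Omega$. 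Since $v := \delta h/\max_{\bar\Omega} h$ satisfies $-\Delta v + M v = 0$, the function $\phi = (v-u)^+$ satisfies $-\Delta(v-u) + M(v-u) \le 0$ weakly. Testing gives
$$
\int |\nabla \phi|^2 (1-\eta_\varepsilon)^2 \le 2\int \phi (1-\eta_\varepsilon)|\nabla\phi||\nabla\eta_\varepsilon| .
$$
Since $\phi \le \delta$ is bounded, Cauchy--Schwarz yields $\int |\nabla\phi|^2 (1-\eta_\varepsilon)^2 \le C\delta^2\|\nabla\eta_\varepsilon\|_2^2 \to 0$. Thus $\phi$ is constant on $\Omega\setminus\Gamma$, which is connected because $\Gamma$ has capacity zero, and $\phi$ vanishes near $\partial\Omega$, so $\phi \equiv 0$. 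Hence $u \ge \delta h/\max h \ge \delta\,\min h/\max h$. Letting $\delta \uparrow m$ gives the claim. This direct energy argument bypasses the drift operator entirely. The only technical points are that $u \in C(\bar\Omega\setminus\Gamma)$ lies in $H^1_{\rm loc}$ away from $\Gamma$ where $\phi \neq 0$ (by interior regularity for supersolutions, or by mollifying), and that $\phi$ has compact support away from $\partial\Omega$, which follows from continuity and $v < u$ on $\partial\Omega \setminus \Gamma$.
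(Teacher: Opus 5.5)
Your final argument, the energy estimate in the last paragraph, is correct. It takes a genuinely different route from the paper, which gives no proof of its own here and simply cites \cite[Lemma 4.4]{CLin95}, just as Lemma \ref{lem:suphar-int} is reduced to \cite[Lemma 2.1]{CLin95}. You prove the statement from scratch in three steps:
\begin{itemize}
\item Since $u\ge 0$, replacing $c$ by $M=\max\{\sup c,0\}$ yields $-\Delta u+Mu\ge 0$, a zeroth-order term of the good sign.
\item The function $h=\cosh(\sqrt M(y_1-a))$ is a positive solution of $-\Delta h+Mh=0$ and serves as a barrier.
\item Testing $-\Delta(v-u)+M(v-u)\le 0$, with $v=\delta h/\max_{\bar\Omega}h$, against $(v-u)^+(1-\eta_\varepsilon)^2$ disposes of $\Gamma$, because the cutoffs have vanishing Dirichlet energy and $(v-u)^+\le\delta$.
\end{itemize}
This buys transparency and an explicit constant: one may take $C=1/\cosh(\sqrt M\,{\rm diam}\,\Omega)$, which depends only on $\sup c^+$ and ${\rm diam}\,\Omega$, and it makes visible why only an upper bound on $c$ is needed. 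With $M=0$ and $h\equiv 1$, the same computation reproves Lemma \ref{lem:suphar-int} with $C=1$, so both preliminary maximum principles come from one argument rather than two citations. The citation route is shorter but opaque.

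For the write-up, discard the false starts. The requirement $-\Delta h-Mh\ge 0$ is the wrong inequality; what you need is $-\Delta h+Mh\le 0$, which $\cosh$ satisfies with equality. The drift-operator maximum principle you only ``grant'' is never used by the final argument. A few details also need fixing:
\begin{itemize}
\item The correct restriction is $\delta<m$, not $\delta<Cm$.
\item It is $\phi(1-\eta_\varepsilon)^2$, not $\phi$, that has compact support in $\Omega\setminus\Gamma$. Indeed, if $U_\varepsilon$ is the open set where $\eta_\varepsilon=1$, then $\{v\ge u\}\cap(\bar\Omega\setminus U_\varepsilon)$ is compact and misses $\partial\Omega$, since $v\le\delta<m\le u$ on $\partial\Omega\setminus\Gamma$.
\item To pass from $\int(1-\eta_\varepsilon)^2|\nabla\phi|^2\to 0$ to $\nabla\phi=0$ a.e., you need $\eta_\varepsilon\to 0$ a.e. This holds along a subsequence because $\|\eta_\varepsilon\|_{L^{2n/(n-2)}}\le C\|\nabla\eta_\varepsilon\|_{L^2}\to 0$. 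Alternatively, $\phi(1-\eta_\varepsilon)\in H^1_0(\Omega)$ has gradient tending to $0$ in $L^2$ and converges to $\phi$ in $L^2$, so Poincar\'e gives $\phi\equiv 0$ without using connectedness of $\Omega\setminus\Gamma$.
\item For $u\in H^1_{\rm loc}(\Omega\setminus\Gamma)$, subtract a local $C^{1,\alpha}$ solution $g$ of $-\Delta g=-Mu$. Then $u-g$ is a continuous, hence locally bounded, superharmonic function, and such functions lie in $H^1_{\rm loc}$. The weak inequality then extends by mollification to nonnegative compactly supported $H^1$ test functions.
\end{itemize}
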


For $x \in \R^n$ and $\lambda > 0$, define the Kelvin transform of $u$ with respect to the sphere $\partial B_\lambda (x)$
\begin{equation}\label{eq:uK}
u_{x, \lambda} (y) = \bigg( \frac{\lambda}{|y - x|} \bigg)^{n - 2} u(y^{x, \lambda}),
\end{equation}
where
\begin{equation}\label{eq:yK}
y^{x, \lambda} := x + \frac{\lambda^2 (y - x)}{|y - x|^2}
\end{equation}
is the inversion point of $y$ with respect to the sphere $\partial B_\lambda (x)$. Then $u_{x, \lambda}$ satisfies
$$
\Delta u_{x, \lambda} (y) = \bigg( \frac{\lambda}{|y - x|} \bigg)^{n + 2} \Delta u(y^{x, \lambda}).
$$

Let $\Gamma \subset \{ y_1 = 0 \}$ be a closed set with ${\rm Cap} (\Gamma) = 0$. For $x = (x_1, 0, \dots, 0) \in \R^n \setminus \{ 0 \}$ and $0 < \lambda < |x_1|$, define
\begin{equation}\label{eq:GaK}
\Gamma^{x, \lambda} = \{ y^{x, \lambda} : y \in \Gamma \}
\end{equation}
as the inversion set of $\Gamma$ with respect to the sphere $\partial B_\lambda (x)$. Notice that the inversion set of the hyperplane $\{ y_1 = 0 \}$ with respect to the sphere $\partial B_\lambda (x)$ is the $(n - 1)$-dimensional sphere of radius $\frac{\lambda^2}{2 |x_1|}$ centered at $\big( x_1 - \frac{\lambda^2}{2 x_1}, 0, \dots, 0 \big)$, with the point $x$ removed.

\begin{lemma}\label{lem:Cap0-K}
Let $\Gamma \subset \{ y_1 = 0 \}$ be a closed set with ${\rm Cap} (\Gamma) = 0$. For any $x = (x_1, 0, \dots, 0) \in \R^n \setminus \{ 0 \}$ and $0 < \lambda < |x_1|$, we have ${\rm Cap} (\Gamma^{x, \lambda}) = 0$.
\end{lemma}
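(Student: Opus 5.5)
The plan is to prove that the inversion map $\Phi(y) = y^{x,\lambda}$ preserves zero capacity, by working locally where $\Phi$ is bi-Lipschitz. Since $\Gamma \subset \{y_1 = 0\}$ and $|x_1| > \lambda > 0$, the point $x$ has positive distance $|x_1|$ from $\Gamma$. Hence $\Phi$, a smooth involution of $\R^n \setminus \{x\}$, is a diffeomorphism of a neighborhood of $\Gamma$ onto a neighborhood of $\Gamma^{x,\lambda}$. The idea is to pull back test functions through $\Phi$ and compare Dirichlet energies. One subtlety is that $\Gamma$ may be unbounded, so $\Gamma^{x,\lambda}$ accumulates at $x$ and $\Gamma^{x,\lambda}$ itself is not closed. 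Since Definition \ref{def:Cap0} applies to arbitrary sets, we treat $\Gamma^{x,\lambda}$ as a set and also note that adding the single point $x$ (capacity zero, as $n \geq 3$) does not matter.

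\emph{Step 1 (reduction to compact pieces).} Capacity is countably subadditive (see Heinonen--Kilpel\"ainen--Martio \cite{HKM}). Write $\Gamma = \bigcup_{j} \Gamma_j$ with $\Gamma_j = \Gamma \cap \{ j-1 \le |y| \le j \}$; each $\Gamma_j$ is compact and has capacity zero. It then suffices to show ${\rm Cap}(\Phi(\Gamma_j)) = 0$ for each $j$. Each $\Phi(\Gamma_j)$ is compact and avoids $x$, since every point of $\Gamma_j$ has distance at least $|x_1|$ from $x$. By Lemma \ref{lem:Cap0}, it is enough to find one bounded neighborhood $V$ of $\Phi(\Gamma_j)$ with ${\rm Cap}(\Phi(\Gamma_j), V) = 0$.

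\emph{Step 2 (transfer under $\Phi$).} Choose a bounded open $U \supset \Gamma_j$ with $\overline U \subset \{ |y - x| > |x_1|/2 \}$, and set $V = \Phi(U)$, which is bounded and open. On $\overline U$ the map $\Phi$ is smooth with $|D\Phi|$ and $|D\Phi^{-1}|$ bounded, so its Jacobian is bounded above and below. Since ${\rm Cap}(\Gamma_j) = 0$, we have ${\rm Cap}(\Gamma_j, U) = 0$. Because $\Gamma_j$ is compact, this gives, for each $\varepsilon>0$, some $\varphi \in C_c^\infty(U)$ with $\varphi \ge 1$ on $\Gamma_j$ and $\int_U |\nabla\varphi|^2 < \varepsilon$. (This uses the standard fact that the capacity of a compact set computed via open supersets agrees with the test-function infimum.) Then $\psi = \varphi \circ \Phi^{-1} \in C_c^\infty(V)$ satisfies $\psi \ge 1$ on $\Phi(\Gamma_j)$. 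By the chain rule and change of variables,
$$
\int_V |\nabla \psi|^2 \,{\rm d}z \le C \int_U |\nabla \varphi|^2 \,{\rm d}y < C\varepsilon,
$$
with $C$ depending only on the bounds for $D\Phi$ on $\overline U$. Hence ${\rm Cap}(\Phi(\Gamma_j), V) = 0$, and Lemma \ref{lem:Cap0} gives ${\rm Cap}(\Phi(\Gamma_j)) = 0$.

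\emph{Step 3 (conclusion).} Countable subadditivity gives ${\rm Cap}(\Gamma^{x,\lambda}) = 0$. The main point needing care is Step 1: justifying countable subadditivity and the handling of the unbounded $\Gamma$ near $x$ with the paper's Definition \ref{def:Cap0}, i.e.\ for every open $\Omega$. For this, combine subadditivity of ${\rm Cap}(\cdot,\Omega)$ with the monotonicity of capacity in $\Omega$, which \cite{HKM} provides. If one prefers to avoid subadditivity, an alternative is a cutoff $\eta$ near $\infty$ that produces a test function supported near $x$ with small energy. This uses the fact that a point has zero capacity when $n\ge3$.
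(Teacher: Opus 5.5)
Your proposal is correct and follows essentially the paper's argument. On compact pieces you transfer test functions through the inversion, invoke Lemma \ref{lem:Cap0}, and then pass to unbounded $\Gamma$ via a countable union.

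The differences are cosmetic. The paper computes the energy factor exactly as $(\lambda/|y-x|)^{2(n-2)}\le 1$, whereas you use a generic bi-Lipschitz bound. The paper exhausts $\Gamma$ by the increasing sets $\Gamma\cap\bar B_k$ and uses continuity of capacity along increasing unions, whereas you use countable subadditivity over annular pieces. Your remark about adjoining the point $x$ is unnecessary, since $x\notin\Gamma^{x,\lambda}$.
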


\begin{proof} The proof consists of two cases.

\medskip

{\it Case 1. $\Gamma$ is bounded.} In this case, $\Gamma$ is compact. Then there exist two constants $0 < \lambda < r < |x| < R < + \infty$ such that $\Gamma \subset B_R (x) \setminus \bar B_r (x)$. Through the inversion, $\Gamma^{x, \lambda} \subset B_{\lambda^2/r} (x) \setminus \bar B_{\lambda^2/R} (x)$. Because ${\rm Cap} (\Gamma) = 0$, by Definition \ref{def:Cap0} we have ${\rm Cap} (\Gamma, B_R (x) \setminus \bar B_r (x)) = 0$. Choose a sequence of functions $\{ u_i \} \subset C_c^\infty (B_R (x) \setminus \bar B_r (x))$ satisfying $u_i \geq 1$ on $\Gamma$ and
$$
\int_{B_R (x) \setminus \bar B_r (x)} |\nabla u_i|^2 \,{\rm d}y \to 0 ~~~~~~ \textmd{as} ~ i \to \infty.
$$
Set
$$
v_i (z) = \bigg\{
\aligned
& u_i (z^{x, \lambda}) ~~~~~~ && \textmd{if} ~ z \in B_{\lambda^2/r} (x) \setminus \bar B_{\lambda^2/R} (x), \\
& 0 ~~~~~~ && \textmd{otherwise}.
\endaligned
$$
Then $\{ v_i \} \subset C_c^\infty (B_{\lambda^2/r} (x) \setminus \bar B_{\lambda^2/R} (x))$ is a sequence of functions satisfying $v_i \geq 1$ on $\Gamma^{x, \lambda}$ and
$$
\aligned
\int_{B_{\lambda^2/r} (x) \setminus \bar B_{\lambda^2/R} (x)} |\nabla v_i (z)|^2 \,{\rm d}z & = \int_{B_R (x) \setminus \bar B_r (x)} |\nabla u_i (y)|^2 \bigg( \frac{\lambda}{|y - x|} \bigg)^{2 (n - 2)} \,{\rm d}y \\
& \leq \int_{B_R (x) \setminus \bar B_r (x)} |\nabla u_i (y)|^2 \,{\rm d}y \to 0
\endaligned
$$
as $i \to \infty$. Therefore, ${\rm Cap} (\Gamma^{x, \lambda}, B_{\lambda^2/r} (x) \setminus \bar B_{\lambda^2/R} (x)) = 0$. Since $B_{\lambda^2/r} (x) \setminus \bar B_{\lambda^2/R} (x)$ is bounded, by Lemma \ref{lem:Cap0} we obtain ${\rm Cap} (\Gamma^{x, \lambda}) = 0$.

\medskip

{\it Case 2. $\Gamma$ is unbounded.} For each positive integer $k$, $\Gamma \cap \bar B_k$ is compact and ${\rm Cap} (\Gamma \cap \bar B_k) = 0$. From Case 1, we know that ${\rm Cap} ((\Gamma \cap \bar B_k)^{x, \lambda}) = 0$. Now, for any open $\Omega \subset \R^n$,
$$
\aligned
{\rm Cap} (\Gamma^{x, \lambda} \cap \Omega, \Omega) & = {\rm Cap} \bigg( \bigg( \bigcup_{k = 1}^\infty (\Gamma \cap \bar B_k) \bigg)^{x, \lambda} \cap \Omega, \Omega \bigg) \\
& = {\rm Cap} \bigg( \bigcup_{k = 1}^\infty (\Gamma \cap \bar B_k)^{x, \lambda} \cap \Omega, \Omega \bigg) \\
& = \lim_{k \to \infty} {\rm Cap} ((\Gamma \cap \bar B_k)^{x, \lambda} \cap \Omega, \Omega) = 0.
\endaligned
$$
By Definition \ref{def:Cap0}, we conclude that ${\rm Cap} (\Gamma^{x, \lambda}) = 0$. The proof of Lemma \ref{lem:Cap0-K} is completed.
\end{proof}

Via the Kelvin transform, we can give the exterior region versions of Lemma \ref{lem:suphar-int} and Lemma \ref{lem:supsol-int} in the following.

\begin{lemma}\label{lem:suphar-ext}
Let $x = (x_1, 0, \dots, 0) \in \R^n \setminus \{ 0 \}$, $0 < \lambda < |x_1|$. Let $\Omega \subset \R^n \setminus \bar B_\lambda (x)$ be a smooth domain, and $\Gamma \subset \{ y_1 = 0 \}$ be a closed set with ${\rm Cap} (\Gamma) = 0$. Assume that $u \in C(\bar\Omega \setminus \Gamma)$ is superharmonic in $\Omega \setminus \Gamma$ in the distributional sense, and
$$
\inf_{y \in \bar\Omega \setminus \Gamma} |y - x|^{n - 2} u(y) > - \infty.
$$
Then
$$
u(y) \geq \bigg( \frac{\lambda}{|y - x|} \bigg)^{n - 2} \inf_{\partial\Omega \setminus \Gamma} \bigg( \bigg( \frac{|\cdot - x|}{\lambda} \bigg)^{n - 2} u \bigg)
$$
for all $y \in \bar\Omega \setminus \Gamma$.
\end{lemma}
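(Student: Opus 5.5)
The plan is to reduce Lemma \ref{lem:suphar-ext} to the interior version, Lemma \ref{lem:suphar-int}, by applying the Kelvin transform with respect to $\partial B_\lambda (x)$. Set $v = u_{x, \lambda}$, defined on the inverted region $\Omega^{x, \lambda} = \{ z^{x, \lambda} : z \in \Omega \}$ minus $\Gamma^{x, \lambda}$. Since $\Omega \subset \R^n \setminus \bar B_\lambda (x)$, the set $\Omega^{x, \lambda}$ lies in $B_\lambda (x) \setminus \{ x \}$ and is bounded. The inversion is a smooth diffeomorphism away from $x$, so $\Omega^{x, \lambda}$ is smooth away from $x$. By the transformation formula for the Laplacian quoted before \eqref{eq:GaK}, $\Delta v (y) = ( \lambda / |y - x| )^{n + 2} \Delta u (y^{x, \lambda})$. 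This identity also holds distributionally, because the Kelvin transform maps superharmonic functions to superharmonic functions, as one checks by testing against $\varphi_{x, \lambda}$. Hence $v$ is superharmonic in $\Omega^{x, \lambda} \setminus \Gamma^{x, \lambda}$.

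Next I check the hypotheses of Lemma \ref{lem:suphar-int}. By Lemma \ref{lem:Cap0-K}, ${\rm Cap} (\Gamma^{x, \lambda}) = 0$. For the lower bound, if $y = z^{x, \lambda}$ then $|z - x| = \lambda^2 / |y - x|$. This gives $v(y) = ( \lambda / |y - x| )^{n - 2} u(z) = \lambda^{-(n-2)} |z - x|^{n - 2} u(z)$, so $\inf v = \lambda^{2 - n} \inf_{\bar\Omega \setminus \Gamma} |z - x|^{n - 2} u(z) > - \infty$. The boundary values correspond in the same way: $\inf_{\partial \Omega^{x, \lambda}} v = \inf_{\partial\Omega \setminus \Gamma} ( |\cdot - x| / \lambda )^{n - 2} u$. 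Applying Lemma \ref{lem:suphar-int} and transforming back via $u(z) = ( \lambda / |z - x| )^{n - 2} v(z^{x, \lambda})$ gives exactly the stated inequality.

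The main obstacle is that $\Omega$ may be unbounded. In that case $\bar{\Omega^{x, \lambda}}$ contains the point $x$, which need not be a boundary point where $v$ is continuous, and $\Omega^{x, \lambda}$ may fail to be smooth at $x$. To handle this, I add $\{ x \}$ to the singular set and replace $\Gamma^{x, \lambda}$ by $\Gamma^{x, \lambda} \cup \{ x \}$. A point has zero capacity for $n \geq 3$, and a finite union of capacity-zero sets has capacity zero, so Lemma \ref{lem:suphar-int} still applies. Its proof (Chen--Lin) only uses the boundary away from the capacity-zero set and the lower bound on $v$, both of which we have; the possible lack of smoothness at $x$ is harmless because the relevant boundary infimum is taken over $\partial\Omega \setminus \Gamma$.

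If one insists on smooth bounded domains, I would instead exhaust $\Omega$ by $\Omega \cap B_R (x)$, using smooth approximations. The extra boundary piece $\partial B_R (x) \cap \Omega$ then has to be controlled as $R \to \infty$. The weighted quantity $|\cdot - x|^{n - 2} u$ is bounded below there, but it is not small, so this route needs more care. For that reason the cleaner route is to treat $x$ as part of the capacity-zero singular set in the inverted picture.
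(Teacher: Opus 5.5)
Your proposal is correct and is essentially the paper's proof: Kelvin-transform $u$ with respect to $\partial B_\lambda(x)$, use Lemma \ref{lem:Cap0-K} and adjoin the point $x$ so that $\Gamma^{x,\lambda}\cup\{x\}$ is a closed set of capacity zero, apply Lemma \ref{lem:suphar-int} to $u_{x,\lambda}$ on $\Omega^{x,\lambda}$, and transform back. Adjoining $x$ is also what makes the singular set closed when $\Gamma$ is unbounded, as the paper notes. The exhaustion alternative in your last paragraph is not needed, and the paper handles the possible non-smoothness of $\Omega^{x,\lambda}$ at $x$ no more carefully than you do.
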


\begin{proof} Let $u_{x, \lambda}$ be the Kelvin transform of $u$ as in \eqref{eq:uK}. Then $\Omega^{x, \lambda} \subset B_\lambda (x)$ is open, $\Gamma^{x, \lambda} \subset B_\lambda (x)$, and $u_{x, \lambda} \in C(\bar\Omega^{x, \lambda} \setminus (\Gamma^{x, \lambda} \cup \{ x \}))$ is a superharmonic function in $\Omega^{x, \lambda} \setminus \Gamma^{x, \lambda}$ in the distributional sense and bounded from below. By Lemma \ref{lem:Cap0-K}, we know that ${\rm Cap} (\Gamma^{x, \lambda}) = 0$, and thus ${\rm Cap} (\Gamma^{x, \lambda} \cup \{ x \}) = 0$. Since $\Gamma^{x, \lambda} \cup \{ x \}$ is closed, by Lemma \ref{lem:suphar-int}, we have
$$
u_{x, \lambda} (z) \geq \inf_{\partial\Omega^{x, \lambda} \setminus (\Gamma^{x, \lambda} \cup \{ x \})} u_{x, \lambda}
$$
for all $z \in \bar\Omega^{x, \lambda} \setminus (\Gamma^{x, \lambda} \cup \{ x \})$. By the definition of Kelvin transform, we arrive at the desired result.
\end{proof}

\begin{lemma}\label{lem:supsol-ext}
Let $x = (x_1, 0, \dots, 0) \in \R^n \setminus \{ 0 \}$, $0 < \lambda < |x_1|$. Let $\Omega \subset \R^n \setminus \bar B_\lambda (x)$ be a smooth domain, and $\Gamma \subset \{ y_1 = 0 \}$ be a closed set with ${\rm Cap} (\Gamma) = 0$. Assume that $u \in C(\bar\Omega \setminus \Gamma)$ is a nonnegative function, $c(\cdot)$ is a measurable function in $\bar\Omega \setminus \Gamma$ satisfying
$$
\sup_{y \in \bar\Omega \setminus \Gamma} |y - x|^4 c(y) < + \infty,
$$
and
$$
- \Delta u + c u \geq 0 ~~~~~~ \textmd{in} ~ \Omega \setminus \Gamma
$$
in the distributional sense. Then there exists a constant $0 < C \leq 1$ such that
$$
u(y) \geq C \bigg( \frac{\lambda}{|y - x|} \bigg)^{n - 2} \inf_{\partial\Omega \setminus \Gamma} \bigg( \bigg( \frac{|\cdot - x|}{\lambda} \bigg)^{n - 2} u \bigg)
$$
for all $y \in \bar\Omega \setminus \Gamma$.
\end{lemma}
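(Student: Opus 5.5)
The plan is to follow the proof of Lemma \ref{lem:suphar-ext}: pass to the Kelvin transform $u_{x,\lambda}$, which lives in the bounded region $\Omega^{x,\lambda} \subset B_\lambda (x)$, and apply the interior maximum principle Lemma \ref{lem:supsol-int} there. Put $v = u_{x,\lambda}$ and $w(z) := z^{x,\lambda}$. For $z \in \Omega^{x,\lambda} \setminus \Gamma^{x,\lambda}$ we have $y = w \in \Omega$, and the Kelvin transform identity gives
$$
-\Delta v(z) = \Big(\frac{\lambda}{|z-x|}\Big)^{n+2} (-\Delta u)(w) \geq -\Big(\frac{\lambda}{|z-x|}\Big)^{n+2} c(w)\, u(w).
$$
We also have $u(w) = (|z-x|/\lambda)^{n-2} v(z)$. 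Together these give
$$
-\Delta v + \tilde c\, v \geq 0, \qquad \tilde c(z) := \Big(\frac{\lambda}{|z-x|}\Big)^{4} c(z^{x,\lambda}).
$$
Since $|z-x| = \lambda^2/|w-x|$, it follows that $\tilde c(z) = \lambda^{-4} |w-x|^4 c(w)$. The hypothesis $\sup |y-x|^4 c(y) < +\infty$ is therefore exactly $\sup \tilde c < +\infty$. Also $v \geq 0$ and $v \in C(\bar\Omega^{x,\lambda} \setminus (\Gamma^{x,\lambda} \cup \{x\}))$.

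The distributional statement requires some care. For a test function $\varphi \geq 0$ supported in $\Omega^{x,\lambda} \setminus (\Gamma^{x,\lambda} \cup \{x\})$, change variables by the inversion. The Kelvin transform of $\varphi$, namely $\varphi_{x,\lambda}$, is a nonnegative test function on $\Omega \setminus \Gamma$. The conformal identity $\int v \Delta\varphi = \int u \,\Delta(\varphi_{x,\lambda})$, with the appropriate weights, then transfers the inequality. Because the inversion is a diffeomorphism away from $x$, this is a standard computation. Note that $x \notin \bar\Omega^{x,\lambda}$ unless $\Omega$ is unbounded; in that case $x$ is added to the exceptional set.

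Next, by Lemma \ref{lem:Cap0-K}, $\Gamma^{x,\lambda}$ has capacity zero, so $\Gamma^{x,\lambda} \cup \{x\}$ is a closed set of capacity zero. Its closedness follows as in the proof of Lemma \ref{lem:suphar-ext}. Lemma \ref{lem:supsol-int} then applies to $v$ on $\Omega^{x,\lambda}$ and yields $0 < C \leq 1$ with $v(z) \geq C \inf_{\partial\Omega^{x,\lambda} \setminus (\Gamma^{x,\lambda} \cup \{x\})} v$.

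To return to $u$, use that $\partial\Omega^{x,\lambda} \setminus \{x\}$ is the image of $\partial\Omega$ under inversion and that $v(z) = (|w-x|/\lambda)^{n-2} u(w)$. The infimum on the right is therefore $\inf_{\partial\Omega \setminus \Gamma} ((|\cdot - x|/\lambda)^{n-2} u)$. If $x$ lies in the closure, the point $x$ is excluded from the infimum, which is harmless because it only enlarges the infimum. On the left, $u(y) = (\lambda/|y-x|)^{n-2} v(y^{x,\lambda})$, which gives the claim.

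The main obstacle is smoothness. Lemma \ref{lem:supsol-int} is stated for smooth bounded domains, and $\Omega^{x,\lambda}$ is smooth except possibly at $x$ when $\Omega$ is unbounded. I would handle this the same way Lemma \ref{lem:suphar-ext} implicitly does: treat $x$ as part of the capacity-zero singular set, so that the boundary issue there is absorbed by the capacity argument of Chen–Lin. Failing that, I would exhaust $\Omega$ by smooth bounded subdomains and pass to the limit.
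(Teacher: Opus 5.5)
Your proposal is correct and is the argument the paper intends (it omits the proof as ``similar to that of Lemma \ref{lem:suphar-ext}''): Kelvin-transform $u$ into $B_\lambda(x)$, note that the new coefficient $\tilde c(z) = \lambda^{-4}|z^{x,\lambda}-x|^4 c(z^{x,\lambda})$ is bounded above exactly by the hypothesis, and apply Lemma \ref{lem:supsol-int} with the closed capacity-zero set $\Gamma^{x,\lambda}\cup\{x\}$ supplied by Lemma \ref{lem:Cap0-K}. Your check that the distributional inequality transfers under the inversion, and your handling of the point $x$ (including the smoothness caveat, which the paper also leaves implicit in Lemma \ref{lem:suphar-ext}), are consistent with that proof.
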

The proof of Lemma \ref{lem:supsol-ext} is similar to that of Lemma \ref{lem:suphar-ext} and we omit it here.

\section{Symmetry and monotonicity for global singular solutions}\label{sec:Sym-x1}

In this section, we establish the symmetry and monotonicity properties in Theorem \ref{thm:sym-x1} via the method of moving spheres. Since the nonlinearity $f$ lacks both Lipschitz regularity and the nonnegativity assumption, the maximum principle presented in Lemma \ref{lem:supsol-ext} plays a crucial role in our analysis.

\begin{proof}[Proof of Theorem \ref{thm:sym-x1}] Without loss of generality, we assume that $0 \in \Gamma$ and
\begin{equation}\label{eq:singular-0}
\limsup_{y \in \R^n \setminus \Gamma \atop y \to 0} u(y) = + \infty.
\end{equation}
Denote by $\{ y_1 = 0 \}^\perp$ the orthogonal complement of the hyperplane $\{ y_1 = 0 \}$.

\medskip

\noindent{\bf Claim 1:} For every $x = (x_1, 0, \dots, 0) \in \{ y_1 = 0 \}^\perp \setminus \{ 0 \}$, there exists a real number $\lambda_2 \in (0, |x_1|)$ such that for any $0 < \lambda < \lambda_2$, we have
\begin{equation}\label{eq:claim1-g}
u(y) \geq u_{x, \lambda} (y) ~~~~~~ \forall\, y \in \R^n \setminus (B_\lambda (x) \cup \Gamma),
\end{equation}
where $u_{x, \lambda}$ is defined as in \eqref{eq:uK} and satisfies the equation
\begin{equation}\label{eq:uK-eq}
- \Delta u_{x, \lambda} (y) = \bigg( \frac{\lambda}{|y - x|} \bigg)^{n + 2} f(u(y^{x, \lambda})).
\end{equation}
The proof of Claim 1 consists of two steps.

\medskip

{\it Step 1.} We show that there exists $\lambda_1 \in (0, |x_1|)$ such that for any $0 < \lambda < \lambda_1$,
$$
u(y) \geq u_{x, \lambda} (y) ~~~~~~ \forall\, y \in B_{\lambda_1} (x) \setminus B_\lambda (x).
$$
Since $u \in C^2 (\R^n \setminus \Gamma)$ is positive, we can suppose
$$
|\nabla \ln u| \leq C_0 ~~~~~~ \textmd{in} ~ B_{|x_1|/2} (x)
$$
for some constant $C_0 > 0$. Then we have
\begin{equation}\label{eq:dru-g}
\aligned
\frac{ {\rm d} }{ {\rm d}r } \Big( r^\frac{n - 2}{2} u(x + r \theta) \Big) & = r^\frac{n - 4}{2} u(x + r \theta) \bigg( \frac{n - 2}{2} + \frac{\nabla u \cdot \theta}{u} r \bigg) \\
& \geq r^\frac{n - 4}{2} u(x + r \theta) \bigg( \frac{n - 2}{2} - C_0 r \bigg) > 0
\endaligned
\end{equation}
for all $0 < r < \lambda_1 := \min \big\{ \frac{n - 2}{2 C_0}, \frac{|x_1|}{2} \big\}$ and $\theta \in \Sp^{n - 1}$. For any $0 < \lambda < \lambda_1$ and $y \in B_{\lambda_1} (x) \setminus B_\lambda (x)$, let $\theta = \frac{y - x}{|y - x|}$, $r_1 = |y - x|$ and $r_2 = \frac{\lambda^2 r_1}{|y - x|^2}$. Using \eqref{eq:dru-g} we have
$$
r_1^\frac{n - 2}{2} u(x + r_1 \theta) \geq r_2^\frac{n - 2}{2} u(x + r_2 \theta).
$$
That is, for any $0 < \lambda < \lambda_1$,
\begin{equation}\label{eq:la1-g}
u(y) \geq u_{x, \lambda} (y) ~~~~~~ \forall\, y \in B_{\lambda_1} (x) \setminus B_\lambda (x).
\end{equation}

{\it Step 2.} We show that there exists $\lambda_2 \in (0, \lambda_1]$ such that \eqref{eq:claim1-g} holds for all $0 < \lambda < \lambda_2$.

\medskip

Let
\begin{equation}\label{eq:Om1-g}
\Omega_1 := \bigg\{ y \in \R^n \setminus (B_{\lambda_1} (x) \cup \Gamma) : u(y) < \bigg( \frac{\lambda_1}{|y - x|} \bigg)^{n - 2} \inf_{\partial B_{\lambda_1} (x)} u \bigg\}.
\end{equation}
By \eqref{eq:mono}, the monotone assumption of $f$,
$$
u^{ - \frac{n + 2}{n - 2} } f(u) \geq (\inf\nolimits_{\partial B_{\lambda_1} (x)} u)^{ - \frac{n + 2}{n - 2} } f(\inf\nolimits_{\partial B_{\lambda_1} (x)} u) ~~~~~~ \textmd{in} ~ \Omega_1.
$$
It follows that for $y \in \Omega_1$,
$$
- \Delta u = f(u) \geq \frac{f(\inf_{\partial B_{\lambda_1} (x)} u)}{ (\inf_{\partial B_{\lambda_1} (x)} u)^\frac{n + 2}{n - 2} } u^\frac{n + 2}{n - 2} \geq \min \bigg\{ 0, \frac{f(\inf_{\partial B_{\lambda_1} (x)} u)}{\inf_{\partial B_{\lambda_1} (x)} u} \bigg\} \bigg( \frac{\lambda_1}{|y - x|} \bigg)^4 u.
$$
Note that on $\partial\Omega_1 \setminus \Gamma$,
$$
\bigg( \frac{|y - x|}{\lambda_1} \bigg)^{n - 2} u(y) \geq \inf_{\partial B_{\lambda_1} (x)} u > 0.
$$
Since ${\rm Cap} (\Gamma) = 0$, by Lemma \ref{lem:supsol-ext}, there exists a constant $0 < C_1 \leq 1$ such that
$$
u(y) \geq C_1 \bigg( \frac{\lambda_1}{|y - x|} \bigg)^{n - 2} \inf_{\partial B_{\lambda_1} (x)} u > 0 ~~~~~~ \forall\, y \in \Omega_1.
$$
Obviously, the above inequality holds in $(\R^n \setminus (B_{\lambda_1} (x) \cup \Gamma)) \setminus \Omega_1$, and therefore
$$
u(y) \geq C_1 \bigg( \frac{\lambda_1}{|y - x|} \bigg)^{n - 2} \inf_{\partial B_{\lambda_1} (x)} u > 0 ~~~~~~ \forall\, y \in \R^n \setminus (B_{\lambda_1} (x) \cup \Gamma).
$$
Let
$$
\lambda_2 := \lambda_1 \bigg( \frac{C_1 \inf_{\partial B_{\lambda_1} (x)} u}{\sup_{B_{\lambda_1} (x)} u} \bigg)^\frac{1}{n - 2} \leq \lambda_1.
$$
Then for any $0 < \lambda < \lambda_2$ and $y \in \R^n \setminus (B_{\lambda_1} (x) \cup \Gamma)$, we have
$$
\aligned
u_{x, \lambda} (y) & = \bigg( \frac{\lambda}{|y - x|} \bigg)^{n - 2} u(y^{x, \lambda}) \\
& \leq \bigg( \frac{\lambda_2}{|y - x|} \bigg)^{n - 2} \sup_{B_{\lambda_1} (x)} u \\
& \leq C_1 \bigg( \frac{\lambda_1}{|y - x|} \bigg)^{n - 2} \inf_{\partial B_{\lambda_1} (x)} u \leq u(y).
\endaligned
$$
Inequality \eqref{eq:claim1-g} follows from \eqref{eq:la1-g} and the above. Claim 1 is proved.

Now, we can define
$$
\bar\lambda (x) := \sup \{ \mu \in (0, |x_1|) : u(y) \geq u_{x, \lambda} (y),~ \forall\, 0 < \lambda < \mu,~ y \in \R^n \setminus (B_\lambda (x) \cup \Gamma) \}.
$$
By Claim 1, $\bar\lambda (x)$ is well-defined and $0 < \bar\lambda (x) \leq |x_1|$.

\medskip

\noindent{\bf Claim 2:} $\bar\lambda (x) = |x_1|$ for all $x = (x_1, 0, \dots, 0) \in \{ y_1 = 0 \}^\perp \setminus \{ 0 \}$.

\medskip

Suppose $\bar\lambda (x) < |x_1|$ for some $x \in \{ y_1 = 0 \}^\perp \setminus \{ 0 \}$. For brevity, we denote $\bar\lambda (x) = \bar\lambda$ in the below. By the definition of $\bar\lambda$,
\begin{equation}\label{eq:bala-g}
u(y) \geq u_{x, \bar\lambda} (y) ~~~~~~ \forall\, y \in \R^n \setminus (B_{\bar\lambda} (x) \cup \Gamma).
\end{equation}
Furthermore, we want to prove
\begin{equation}\label{eq:strong-g}
u(y) > u_{x, \bar\lambda} (y) ~~~~~~ \forall\, y \in \R^n \setminus (\bar B_{\bar\lambda} (x) \cup \Gamma).
\end{equation}

Let
\begin{equation}\label{eq:Om2-g}
\Omega_2 := \Big\{ y \in \R^n \setminus (B_{\bar\lambda} (x) \cup \Gamma) : u(y) < \min \big\{ u(y^{x, \bar\lambda}), 2 u_{x, \bar\lambda} (y) \big\} \Big\}.
\end{equation}
Clearly, $\Omega_2$ is an open subset of $\R^n \setminus (\bar B_{\bar\lambda} (x) \cup \Gamma)$. By \eqref{eq:mono},
$$
u^{ - \frac{n + 2}{n - 2} } f(u) \geq u(y^{x, \bar\lambda})^{ - \frac{n + 2}{n - 2} } f(u(y^{x, \bar\lambda})) ~~~~~~ \textmd{in} ~ \Omega_2.
$$
It follows from \eqref{eq:bala-g} and the above that for $y \in \Omega_2$,
\begin{equation}\label{eq:sup-g1}
\aligned
- \Delta (u - u_{x, \bar\lambda}) & = f(u) - \bigg( \frac{\bar\lambda}{|y - x|} \bigg)^{n + 2} f(u(y^{x, \bar\lambda})) \\
& = u^{ - \frac{n + 2}{n - 2} } f(u) u^\frac{n + 2}{n - 2} - u(y^{x, \bar\lambda})^{ - \frac{n + 2}{n - 2} } f(u(y^{x, \bar\lambda})) u_{x, \bar\lambda}^\frac{n + 2}{n - 2} \\
& \geq u^{ - \frac{n + 2}{n - 2} } f(u) \Big( u^\frac{n + 2}{n - 2} - u_{x, \bar\lambda}^\frac{n + 2}{n - 2} \Big) \\
& \geq \min \Big\{ 0, u^{ - \frac{n + 2}{n - 2} } f(u) \Big\} \Big( u^\frac{n + 2}{n - 2} - u_{x, \bar\lambda}^\frac{n + 2}{n - 2} \Big) \\
& \geq \min \Big\{ 0, u^{ - \frac{n + 2}{n - 2} } f(u) \Big\} \frac{n + 2}{n - 2} u^\frac{4}{n - 2} (u - u_{x, \bar\lambda}),
\endaligned
\end{equation}
where we used the mean value theorem and \eqref{eq:bala-g} in the last inequality. On the other hand, in $\Omega_2$,
\begin{equation}\label{eq:uOm2-g}
u(y) \leq 2 u_{x, \bar\lambda} (y) = 2 \bigg( \frac{\bar\lambda}{|y - x|} \bigg)^{n - 2} u(y^{x, \bar\lambda}) \leq 2 \bigg( \frac{\bar\lambda}{|y - x|} \bigg)^{n - 2} \sup_{B_{\bar\lambda} (x)} u \leq 2 \sup_{B_{\bar\lambda} (x)} u.
\end{equation}
By \eqref{eq:mono} and \eqref{eq:uOm2-g},
$$
u^{ - \frac{n + 2}{n - 2} } f(u) \geq (2 \sup\nolimits_{B_{\bar\lambda} (x)} u)^{ - \frac{n + 2}{n - 2} } f(2 \sup\nolimits_{B_{\bar\lambda} (x)} u) ~~~~~~ \textmd{in} ~ \Omega_2.
$$
It follows from \eqref{eq:sup-g1}, \eqref{eq:uOm2-g} and the above that for $y \in \Omega_2$,
\begin{equation}\label{eq:sup-g2}
- \Delta (u - u_{x, \bar\lambda}) \geq \frac{n + 2}{n - 2} \min \bigg\{ 0, \frac{f(2 \sup_{B_{\bar\lambda} (x)} u)}{2 \sup_{B_{\bar\lambda} (x)} u} \bigg\} \bigg( \frac{\bar\lambda}{|y - x|} \bigg)^4 (u - u_{x, \bar\lambda}).
\end{equation}

With the help of the above analysis, we can prove \eqref{eq:strong-g} by contradiction. Suppose there exists $y_0 \in \R^n \setminus (\bar B_{\bar\lambda} (x) \cup \Gamma)$ such that $u(y_0) = u_{x, \bar\lambda} (y_0)$. Because $|y_0 - x| > \bar\lambda$, we get
$$
u(y_0^{x, \bar\lambda}) = \bigg( \frac{|y_0 - x|}{\bar\lambda} \bigg)^{n - 2} u_{x, \bar\lambda} (y_0) > u_{x, \bar\lambda} (y_0) = u(y_0).
$$
Then $y_0 \in \Omega_2$, and there is a small $\delta > 0$ such that $\bar B_\delta (y_0) \subset \Omega_2$. By \eqref{eq:bala-g}, \eqref{eq:sup-g2} and the strong maximum principle, $u \equiv u_{x, \bar\lambda}$ in $B_\delta (y_0)$. Repeating the same argument, we conclude that $\{ y \in \R^n \setminus (\bar B_{\bar\lambda} (x) \cup \Gamma) : u(y) = u_{x, \bar\lambda} (y) \}$ is a both open and closed nonempty subset of $\R^n \setminus (\bar B_{\bar\lambda} (x) \cup \Gamma)$. Since ${\rm Cap} (\Gamma) = 0$, the set $\R^n \setminus \Gamma$ is connected (see \cite[Lemma 2.46]{HKM}), so is $(\R^n \setminus \Gamma) \cap (\R^n \setminus \bar B_{\bar\lambda} (x)) = \R^n \setminus (\bar B_{\bar\lambda} (x) \cup \Gamma)$. Hence, $u \equiv u_{x, \bar\lambda}$ in $\R^n \setminus (\bar B_{\bar\lambda} (x) \cup \Gamma)$, and
$$
\limsup_{y \in \R^n \setminus \Gamma \atop y \to 0} u(y) = \limsup_{y \in \R^n \setminus \Gamma \atop y \to 0} u_{x, \bar\lambda} (y) = \bigg( \frac{\bar\lambda}{|x_1|} \bigg)^{n - 2} u \bigg( \bigg( 1 - \frac{\bar\lambda^2}{|x_1|^2} \bigg) x \bigg) < + \infty
$$
as $\bar\lambda < |x_1|$. This contradicts \eqref{eq:singular-0}. Inequality \eqref{eq:strong-g} is proved.

We also want to show
\begin{equation}\label{eq:hopf-g}
\partial_\nu (u - u_{x, \bar\lambda}) (y) > 0 ~~~~~~ \forall\, y \in \partial B_{\bar\lambda} (x),
\end{equation}
where $\nu$ is the unit outward normal vector on $\partial B_{\bar\lambda} (x)$. For $z_0 \in \partial B_{\bar\lambda} (x)$ satisfying $\partial_\nu (u - u_{x, \bar\lambda}) (z_0) < (n - 2) \bar\lambda^{- 1} u(z_0)$, by a direct calculation,
$$
(n - 2) \bar\lambda^{- 1} u(z_0) > \partial_\nu (u - u_{x, \bar\lambda}) (z_0) = (n - 2) \bar\lambda^{- 1} u(z_0) + 2 \partial_\nu u(z_0).
$$
Thus, $\partial_\nu u(z_0) < 0$ and
$$
\partial_\nu (u(\cdot^{x, \bar\lambda}) - u) (z_0) = - 2 \partial_\nu u(z_0) > 0.
$$
Recall that $u = u_{x, \bar\lambda} = u(\cdot^{x, \bar\lambda})$ on $\partial B_{\bar\lambda} (x)$, we can find a small $\delta > 0$ such that $B_\delta (z_0) \setminus \bar B_{\bar\lambda} (x) \subset \Omega_2$. By \eqref{eq:strong-g}, \eqref{eq:sup-g2} and the Hopf lemma, $\partial_\nu (u - u_{x, \bar\lambda}) (z_0) > 0$. Inequality \eqref{eq:hopf-g} is established.

By \eqref{eq:hopf-g} and the compactness of $\partial B_{\bar\lambda} (x)$, we have
$$
\partial_\nu (u - u_{x, \bar\lambda}) \big|_{\partial B_{\bar\lambda} (x)} \geq b > 0.
$$
By the continuity of $u$ and $\nabla u$, there exists $R \in (\bar\lambda, |x_1|)$ such that for any $\bar\lambda \leq \lambda \leq r \leq R$,
$$
\partial_\nu (u - u_{x, \lambda}) \big|_{\partial B_r (x)} \geq \frac{b}{2} > 0.
$$
Consequently, since $u = u_{x, \lambda}$ on $\partial B_\lambda (x)$, we have for any $\bar\lambda \leq \lambda < R$,
\begin{equation}\label{eq:BR-g}
u(y) > u_{x, \lambda} (y) ~~~~~~ \forall\, y \in B_R (x) \setminus \bar B_\lambda (x).
\end{equation}

To apply Lemma \ref{lem:supsol-ext} to $(u - u_{x, \bar\lambda})$ on $\Omega_2 \setminus \bar B_R (x)$, we need to estimate $(u - u_{x, \bar\lambda})$ on $\partial (\Omega_2 \setminus \bar B_R (x)) \setminus \Gamma$. If $y \in \partial B_R (x)$, then by \eqref{eq:strong-g},
$$
(u - u_{x, \bar\lambda}) (y) \geq \inf_{\partial B_R (x)} (u - u_{x, \bar\lambda}) > 0.
$$
If $y \in \R^n \setminus (\bar B_R (x) \cup \Gamma)$ and $u(y) = u(y^{x, \bar\lambda})$, then
$$
\aligned
\bigg( \frac{|y - x|}{R} \bigg)^{n - 2} (u - u_{x, \bar\lambda}) (y) & = \bigg( \frac{|y - x|}{R} \bigg)^{n - 2} \bigg[ 1 - \bigg( \frac{\bar\lambda}{|y - x|} \bigg)^{n - 2} \bigg] u(y^{x, \bar\lambda}) \\
& \geq \bigg[ 1 - \bigg( \frac{\bar\lambda}{R} \bigg)^{n - 2} \bigg] \inf_{B_{\bar\lambda} (x)} u > 0.
\endaligned
$$
If $y \in \R^n \setminus (\bar B_R (x) \cup \Gamma)$ and $u(y) = 2 u_{x, \bar\lambda} (y)$, then
$$
\bigg( \frac{|y - x|}{R} \bigg)^{n - 2} (u - u_{x, \bar\lambda}) (y) = \bigg( \frac{\bar\lambda}{R} \bigg)^{n - 2} u(y^{x, \bar\lambda}) \geq \bigg( \frac{\bar\lambda}{R} \bigg)^{n - 2} \inf_{B_{\bar\lambda} (x)} u > 0.
$$
Now, by Lemma \ref{lem:supsol-ext},
\begin{equation}\label{eq:uC2G-g}
(u - u_{x, \bar\lambda}) (y) \geq C_2 \bigg( \frac{R}{|y - x|} \bigg)^{n - 2} > 0 ~~~~~~ \forall\, y \in \Omega_2 \setminus B_R (x)
\end{equation}
for some constant
$$
0 < C_2 \leq \min \bigg\{ \inf_{\partial B_R (x)} (u - u_{x, \bar\lambda}), \bigg[ 1 - \bigg( \frac{\bar\lambda}{R} \bigg)^{n - 2} \bigg] \inf_{B_{\bar\lambda} (x)} u, \bigg( \frac{\bar\lambda}{R} \bigg)^{n - 2} \inf_{B_{\bar\lambda} (x)} u \bigg\}.
$$
Obviously, \eqref{eq:uC2G-g} holds automatically in $(\R^n \setminus (B_R (x) \cup \Gamma)) \setminus \Omega_2$, and therefore
$$
(u - u_{x, \bar\lambda}) (y) \geq C_2 \bigg( \frac{R}{|y - x|} \bigg)^{n - 2} > 0 ~~~~~~ \forall\, y \in \R^n \setminus (B_R (x) \cup \Gamma).
$$
It follows that for $y \in \R^n \setminus (B_R (x) \cup \Gamma)$,
\begin{equation}\label{eq:uC2G-g2}
(u - u_{x, \lambda}) (y) \geq C_2 \bigg( \frac{R}{|y - x|} \bigg)^{n - 2} - (u_{x, \lambda} - u_{x, \bar\lambda}) (y).
\end{equation}
By the uniform continuity of $u$ on $\bar B_R (x)$, there exists $0 < \varepsilon \leq R - \bar\lambda$ such that for all $\bar\lambda \leq \lambda < \bar\lambda + \varepsilon$ and $y \in \R^n \setminus (B_R (x) \cup \Gamma)$,
$$
\bigg| \lambda^{n - 2} u \bigg( x + \frac{\lambda^2 (y - x)}{|y - x|^2} \bigg) - \bar\lambda^{n - 2} u \bigg( x + \frac{\bar\lambda^2 (y - x)}{|y - x|^2} \bigg) \bigg| \leq \frac{ C_2 R^{n - 2} }{2}.
$$
This together with \eqref{eq:uC2G-g2} implies that for all $\bar\lambda \leq \lambda < \bar\lambda + \varepsilon$,
$$
u(y) > u_{x, \lambda} (y) ~~~~~~ \forall\, y \in \R^n \setminus (B_R (x) \cup \Gamma).
$$
This and \eqref{eq:BR-g} contradict the definition of $\bar\lambda$. The proof of Claim 2 is completed.

Thus, we have shown that for every $x = (x_1, 0, \dots, 0) \in \{ y_1 = 0 \}^\perp \setminus \{ 0 \}$,
\begin{equation}\label{eq:moving-g}
u(y) \geq u_{x, \lambda} (y) ~~~~~~ \forall\, 0 < \lambda < |x_1|,~ y \in \R^n \setminus (B_\lambda (x) \cup \Gamma).
\end{equation}
Denote $e_1 = (1, 0, \dots, 0) \in \R^n$. For any $a > 0$, $y = (y_1, y') \in \R^n \setminus \{ y_1 = 0 \}$ with $y_1 < a$, and for any $R > a$, we have, by \eqref{eq:moving-g} with $x = R e_1$ and $\lambda = R - a$,
$$
\aligned
u(y_1, y') & \geq u_{x, \lambda} (y_1, y') \\
& = \bigg( \frac{R - a}{|y - R e_1|} \bigg)^{n - 2} u \bigg( R + \frac{(R - a)^2 (y_1 - R)}{|y - R e_1|^2}, \frac{(R - a)^2 y'}{|y - R e_1|^2} \bigg) \\
& = \bigg( \frac{R - a}{|y - R e_1|} \bigg)^{n - 2} u \bigg( \frac{R^2 (2 a - y_1) + R (|y|^2 - 2 a y_1 - a^2) + a^2 y_1}{|y - R e_1|^2}, \frac{(R - a)^2 y'}{|y - R e_1|^2} \bigg).
\endaligned
$$
Here we used the simple fact $|y - x| = |y - R e_1| \geq |y_1 - R| \geq R - a = \lambda$. Sending $R$ to infinity in the above, we obtain
\begin{equation}\label{eq:mono-y+}
u(y_1, y') \geq u(2 a - y_1, y') ~~~~~~ \forall\, a > 0,~ y \in \R^n \setminus \{ y_1 = 0 \} ~ \textmd{with} ~ y_1 < a.
\end{equation}
Similarly,
\begin{equation}\label{eq:mono-y-}
u(y_1, y') \geq u(2 a - y_1, y') ~~~~~~ \forall\, a < 0,~ y \in \R^n \setminus \{ y_1 = 0 \} ~ \textmd{with} ~ y_1 > a.
\end{equation}
Since $a$ is arbitrary, \eqref{eq:mono-y+} and \eqref{eq:mono-y-} give the symmetry of $u$ with respect to the hyperplane $\{ y_1 = 0 \}$. \eqref{eq:mono-y+} also implies that $u$ is monotonically decreasing in the $y_1$-direction in $\{ y_1 > 0 \}$. The proof of Theorem \ref{thm:sym-x1} is completed.
\end{proof}

\section{Asymptotic symmetry for local singular solutions}\label{sec:Asym}

In this section, we refine the method of moving spheres to establish asymptotic symmetry results stated in Theorems \ref{thm:asym} and \ref{thm:asym-0}. First, we derive an upper bound estimate for positive solutions of \eqref{eq:main} near the singular set $\Gamma$ when $\Gamma \subset \Omega$ is a smooth closed manifold of dimension $\leq n - 2$. A similar upper bound estimate was obtained by Chen and Lin \cite{CLin01} via the moving plane method. Here, we establish this estimate by employing an improved moving sphere method, which is simpler. This refined approach will also be applied in the proofs of Theorems \ref{thm:asym} and \ref{thm:asym-0}.

\begin{proposition}\label{prop:blowup}
Let $\Omega$ be a bounded domain in $\R^n$, and $\Gamma \subset \Omega$ be a smooth $k$-dimensional closed manifold with $k \leq n - 2$. Suppose that $f : (0, + \infty) \to (0, + \infty)$ is locally bounded and satisfies
$$
t^{ - \frac{n + 2}{n - 2} } f(t) ~ \textmd{is nonincreasing for} ~ t ~ \textmd{sufficiently large}.
$$
Let $u \in C^2 (\Omega \setminus \Gamma)$ be a positive solution of \eqref{eq:main}. Then there exists a constant $C > 0$ such that
\begin{equation}\label{eq:blowup}
\frac{f(u(x))}{u(x)} \leq C d(x, \Gamma)^{- 2}
\end{equation}
for all $x$ near $\Gamma$, where $d(x, \Gamma)$ is the distance between $x$ and $\Gamma$.
\end{proposition}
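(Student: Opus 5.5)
We argue by contradiction combined with a blow-up (rescaling) argument and the moving sphere method. Suppose \eqref{eq:blowup} fails. Then there is a sequence $x_j \to \Gamma$ with $d(x_j,\Gamma)^2 f(u(x_j))/u(x_j) \to \infty$. Since $f$ is locally bounded and $t^{-\frac{n+2}{n-2}}f(t)$ is nonincreasing for large $t$, we have $f(t)/t \le C t^{4/(n-2)}$ for large $t$, and $f(t)/t$ is bounded on compact subsets of $(0,\infty)$. So we may first reduce to $u(x_j)\to\infty$. A bounded-$u$ subsequence would be handled by the interior Harnack inequality applied to $-\Delta u = (f(u)/u)\,u$ on balls $B_{d_j/2}(x_j)$; there the potential is controlled by the lower bound for $u$ coming from Lemma~\ref{lem:supsol-int}.

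With $u(x_j)\to\infty$, we use the standard doubling lemma (Polá\v{c}ik--Quittner--Souplet type) for the function
\begin{equation*}
M(x) = \big(f(u(x))/u(x)\big)^{1/2},
\end{equation*}
or equivalently for $u^{\frac{2}{n-2}}$ where $f$ is comparable to a power. This lets us choose points $p_j$ with $M(p_j) d(p_j,\Gamma) \to \infty$ and $M \le 2M(p_j)$ on $B_{k_j/M(p_j)}(p_j)$ with $k_j \to \infty$. Set $\varepsilon_j = u(p_j)^{-\frac{2}{n-2}}$ and rescale
\begin{equation*}
w_j(y) = u(p_j)^{-1} u\big(p_j + \varepsilon_j y\big).
\end{equation*}
The monotonicity of $t^{-\frac{n+2}{n-2}}f(t)$ gives $-\Delta w_j = g_j(w_j)$, with $g_j(s) = \varepsilon_j^2 u(p_j)^{-1} f(u(p_j)s)$. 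We have $g_j(s)\le C s^{\frac{n+2}{n-2}}$ for $s$ bounded below, and $s^{-\frac{n+2}{n-2}}g_j(s)$ is nonincreasing for $s \ge t_0/u(p_j)$.

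The key step is to show that $w_j$ satisfies a moving-sphere inequality in the limit. Concretely, for every center $x$ and every $\lambda>0$ we aim to prove $w_j \ge (w_j)_{x,\lambda}$ on $B_{R_j}\setminus B_\lambda(x)$ up to $o(1)$ errors, where $R_j\to\infty$ because the domain grows on the scale $\varepsilon_j$. We run the scheme of Claims~1 and~2 in the proof of Theorem~\ref{thm:sym-x1} on the original $u$, now in bounded domains near $p_j$. Balls $B_\lambda(p)$ with $\lambda$ small compared to $d(p,\Gamma)$ are moved up to $\bar\lambda(p)$, which is comparable to $d(p,\Gamma)$. Because $f$ is monotone only for large $t$, Claim~2 cannot be copied verbatim: the region where $u<u(y^{x,\lambda})$ may contain points where $u$ is not large. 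For these points we use the auxiliary lower bound from Lemma~\ref{lem:supsol-int}/Lemma~\ref{lem:suphar-int}: since $f>0$, $u$ is superharmonic, so $u\ge c>0$ near $\Gamma$. This makes $u_{x,\lambda}$ small on the relevant set whenever $\lambda$ is small relative to $d(x,\Gamma)$, so there the comparison holds trivially. The boundary of the tubular region is handled with the barrier $C(\lambda/|y-x|)^{n-2}$ together with Lemma~\ref{lem:supsol-ext}.

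This yields $u \ge u_{x,\lambda}$ for all $0<\lambda<c\,d(x,\Gamma)$ and all $x$ near $p_j$, with the inequality valid in $B_{c\,d(p_j,\Gamma)}(p_j)\setminus(B_\lambda(x)\cup\Gamma)$. After rescaling, $w_j \ge (w_j)_{x,\lambda}$ holds on balls of radius $c\,d(p_j,\Gamma)/\varepsilon_j\to\infty$ for all $\lambda$ up to a radius that tends to infinity. By the elementary calculus lemma of Li--Zhang/Li--Zhu, letting $\lambda$ and the domain go to infinity, $w_j$ becomes nearly constant on compact sets. Then $\nabla w_j\to 0$ locally uniformly, and by elliptic estimates $w_j\to 1$ in $C^1_{loc}$.

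Finally we reach the contradiction. Once $\nabla w_j \to 0$ and $w_j \to 1$ locally uniformly, the rescaled equation gives $\Delta w_j = -g_j(w_j)$ with $g_j(1)$ of order $M(p_j)^2\varepsilon_j^2$, which by the choice of $\varepsilon_j$ (normalized through the doubling property) is bounded below by a positive constant. Integrating $-\Delta w_j \ge c$ over $B_1$ contradicts the $C^1$ convergence to a constant. Equivalently, a positive function on large balls with uniformly positive Laplacian bound cannot be almost constant.

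The main obstacle is the moving-sphere step: showing $\bar\lambda(p)\gtrsim d(p,\Gamma)$ without global monotonicity of $f$. This requires constructing the region $\Omega_2$ and auxiliary barriers carefully so that the maximum principles (Lemmas~\ref{lem:supsol-int}, \ref{lem:supsol-ext}) apply on the set where $u$ is large, while the small-$u$ set is controlled by the positive lower bound on $u$.
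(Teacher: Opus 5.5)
Your overall architecture (contradiction, doubling selection, blow-up, moving spheres, Li--Zhang calculus lemma) matches the paper's. However, two steps fail as stated.

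\textbf{The moving-sphere step near $p_j$.} On any bounded region you need $u\ge u_{x,\lambda}$ on the outer boundary. The lower bound $u\ge c>0$ supplies this only when $(\lambda/\delta)^{n-2}\sup_{B_\lambda(x)}u\ll c$, where $\delta$ is the distance to that boundary.
\begin{itemize}
\item With your region $B_{c\,d(p_j,\Gamma)}(p_j)$, so $\delta\sim d$, this forces roughly $\lambda\lesssim d\,u(p_j)^{-1/(n-2)}$. Its rescaled size $\lambda M(p_j)$ tends to infinity only if $d^2f(u(p_j))\,u(p_j)^{-\frac{n}{n-2}}\to\infty$. Nothing in your argument gives this, so you have no mechanism for $\bar\lambda(p)\gtrsim d(p,\Gamma)$.
\item Relying instead on $d^{n-2}u\to0$ would be circular. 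The paper obtains \eqref{eq:dn-2u-0} and \eqref{eq:moving-a} only in Theorem~\ref{thm:asym}, from Proposition~\ref{prop:blowup} plus the extra hypothesis \eqref{eq:grow}. It fails in general, e.g.\ in the case $C_0>0$ of Theorem~\ref{thm:asym-0}.
\end{itemize}
The missing idea is to move spheres for the rescaled $w_j$ on $B_{M_j}(x_0)$, with $M_j=M(z_j)$. This is a ball of radius $1$ in the original variables that \emph{contains} the rescaled singular set $\Gamma_j$, which is harmless by Lemma~\ref{lem:suphar-int} since ${\rm Cap}(\Gamma)=0$. On $\partial B_{M_j}(x_0)$ one has $w_j\ge\inf_{\partial B_2}u/u(z_j)$ and $(w_j)_{x_0,\lambda}\le CM_j^{2-n}$. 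So the boundary comparison holds once $f(u(z_j))\,u(z_j)^{-\frac{n}{n-2}}\to\infty$. The paper proves this as follows:
\begin{enumerate}
\item It first shows the blow-up limit $w$ is \emph{nonconstant}, so $\int_{B_1}(-\Delta w_j)\ge c_0$.
\item It compares this with $\int_{B_{M_j^{-1}}(z_j)}f(u)\to0$, which holds because $f(u)\in L^1$.
\item Moving spheres then force $w$ to be constant, which is the contradiction.
\end{enumerate}
The set where $u(z_j)w_j<t_0$, where monotonicity is unavailable, is handled by the explicit Green-function correction $\varphi_j$, not by the lower bound on $u$.

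\textbf{The scaling.} The scale $\varepsilon_j=u(p_j)^{-2/(n-2)}$ is adapted only to the critical power. In general
$$
g_j(1)=\varepsilon_j^2M(p_j)^2=f(u(p_j))\,u(p_j)^{-\frac{n+2}{n-2}}\to\lim_{t\to\infty}t^{-\frac{n+2}{n-2}}f(t),
$$
which vanishes for, e.g., $f(t)=t^p$ with $p<\frac{n+2}{n-2}$. The doubling lemma does not control $\varepsilon_jM(p_j)$. In that case the limit is a positive entire harmonic function, hence constant, and your final contradiction disappears. You must rescale by $M(p_j)^{-1}=(u(p_j)/f(u(p_j)))^{1/2}$.

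Even then, ``$w_j\to1$ implies $-\Delta w_j\ge c$ on $B_1$'' is not automatic. Monotonicity of $t^{-\frac{n+2}{n-2}}f(t)$ bounds $f(u(p_j)s)/f(u(p_j))$ from below only for $s\le1$. For $s>1$ the ratio can be tiny, since $f$ is merely locally bounded. The paper handles this by recentring at a point $\tilde z_j$, farther from $\Gamma$ along the normal, with $u(\tilde z_j)=u(z_j)/2$. There the maximality of $f(u)/u$ at $z_j$ yields the needed lower bound.
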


\begin{proof}[Proof of Proposition \ref{prop:blowup}] For simplicity, we assume that $\Omega = B_2$, $\Gamma \subset B_{1/2}$, $u \in C^2 (\bar B_2 \setminus \Gamma)$ and $f$ is continuous on $(0, + \infty)$. Notice that there is a constant $t_0 \geq 0$ such that
\begin{equation}\label{eq:t0-b}
t^{ - \frac{n + 2}{n - 2} } f(t) ~ \textmd{is nonincreasing for} ~ t > t_0.
\end{equation}
We prove \eqref{eq:blowup} by contradiction. Suppose there exists a sequence $\{ x_j \} \subset B_{1/2} \setminus \Gamma$ such that
\begin{equation}\label{eq:dj2fuxj}
d_j^2 \frac{f(u(x_j))}{u(x_j)} = \sup \bigg\{ d(x, \Gamma)^2 \frac{f(u(x))}{u(x)} : x \in \bar B_2 \setminus \Gamma,~ d(x, \Gamma) \geq d_j \bigg\},
\end{equation}
where
$$
d_j := d(x_j, \Gamma) \to 0 ~~~~~~ \textmd{as} ~ j \to \infty,
$$
but
$$
d_j^2 \frac{f(u(x_j))}{u(x_j)} \to + \infty ~~~~~~ \textmd{as} ~ j \to \infty.
$$

Consider
$$
v_j (x, z) := \bigg( \frac{d_j}{2} - |x - z| \bigg)^2 \frac{f(u(z))}{u(z)} ~~~~~~ \textmd{for} ~ d(x, \Gamma) = d_j,~ z \in \bar B_{d_j/2} (x).
$$
Since $u$ is positive and continuous in $\bar B_2 \setminus \Gamma$, we can find points $\bar x_j, z_j$ such that $d(\bar x_j, \Gamma) = d_j$ and $z_j \in B_{d_j/2} (\bar x_j)$ satisfying
\begin{equation}\label{eq:max-vj}
v_j (\bar x_j, z_j) = \max_{d(x, \Gamma) = d_j \atop z \in \bar B_{d_j/2} (x)} v_j (x, z) \geq v_j (x_j, x_j) = \frac{d_j^2}{4} \cdot \frac{f(u(x_j))}{u(x_j)} > 0.
\end{equation}
Let $r_j := d(z_j, \Gamma)$. Then
\begin{equation}\label{eq:rj-dj}
r_j \geq d(\bar x_j, \Gamma) - |\bar x_j - z_j| > d_j - \frac{d_j}{2} \geq \frac{d_j}{2}
\end{equation}
and
$$
r_j \leq d(\bar x_j, \Gamma) + |\bar x_j - z_j| < d_j + \frac{d_j}{2} \leq \frac{3 d_j}{2} \ll d(\Gamma, \partial B_{1/2})
$$
for large $j$. By \eqref{eq:dj2fuxj}, \eqref{eq:max-vj} and the definition of $v_j$, we have
\begin{equation}\label{eq:fuzj}
\frac{f(u(z_j))}{u(z_j)} = \sup \bigg\{ \frac{f(u(x))}{u(x)} : x \in \bar B_2 \setminus \Gamma,~ d(x, \Gamma) \geq r_j \bigg\}.
\end{equation}
Let $2 \mu_j := \frac{d_j}{2} - |\bar x_j - z_j|$. Then
\begin{equation}\label{eq:2muj}
0 < 2 \mu_j \leq \frac{d_j}{2} ~~~~~~ \textmd{and} ~~~~~~ \frac{d_j}{2} - |\bar x_j - z| \geq \mu_j ~~~ \forall\, z \in \bar B_{\mu_j} (z_j).
\end{equation}
By the definition of $v_j$, we have
$$
4 \mu_j^2 \cdot \frac{f(u(z_j))}{u(z_j)} = v_j (\bar x_j, z_j) \geq v_j (\bar x_j, z) \geq \mu_j^2 \cdot \frac{f(u(z))}{u(z)} ~~~~~~ \forall\, z \in \bar B_{\mu_j} (z_j).
$$
Hence, we have
\begin{equation}\label{eq:4fuzj}
4 \cdot \frac{f(u(z_j))}{u(z_j)} \geq \frac{f(u(z))}{u(z)} ~~~~~~ \forall\, z \in \bar B_{\mu_j} (z_j).
\end{equation}
We also have
\begin{equation}\label{eq:vj-xjxj}
4 \mu_j^2 \cdot \frac{f(u(z_j))}{u(z_j)} = v_j (\bar x_j, z_j) \geq v_j (x_j, x_j) = \frac{d_j^2}{4} \cdot \frac{f(u(x_j))}{u(x_j)} \to + \infty ~~~~~~ \textmd{as} ~ j \to \infty,
\end{equation}
and, by \eqref{eq:rj-dj} and \eqref{eq:2muj},
\begin{equation}\label{eq:rj2fuzj}
r_j^2 \cdot \frac{f(u(z_j))}{u(z_j)} \to + \infty ~~~~~~ \textmd{as} ~ j \to \infty.
\end{equation}
Because $f$ is positive, we know that $u$ is superharmonic in $B_2 \setminus \Gamma$. By Lemma \ref{lem:suphar-int}, $u \geq \inf_{\partial B_2} u > 0$ in $\bar B_2 \setminus \Gamma$. This together with \eqref{eq:rj2fuzj} and the local boundness of $f$ implies that
\begin{equation}\label{eq:uzj}
u(z_j) \to + \infty ~~~~~~ \textmd{as} ~ j \to \infty.
\end{equation}

Now, define
\begin{equation}\label{eq:wj}
w_j (y) = \frac{1}{u(z_j)} u \bigg( z_j + \bigg( \frac{u(z_j)}{f(u(z_j))} \bigg)^\frac{1}{2} y \bigg) ~~~~~~ \textmd{for} ~ y \in \Sigma_j,
\end{equation}
where
$$
\Sigma_j := \bigg\{ y \in \R^n : z_j + \bigg( \frac{u(z_j)}{f(u(z_j))} \bigg)^\frac{1}{2} y \in B_2 \setminus \Gamma \bigg\}.
$$
Then $w_j$ satisfies $w_j (0) = 1$ and
\begin{equation}\label{eq:wj-eq}
- \Delta w_j = \frac{f(u(z_j) w_j)}{f(u(z_j))} ~~~~~~ \textmd{in} ~ \Sigma_j.
\end{equation}
Moreover, we can rewrite \eqref{eq:wj-eq} further as
$$
- \Delta w_j = \bigg( \frac{f(u(z_j) w_j)}{f(u(z_j)) w_j} \bigg) w_j ~~~~~~ \textmd{in} ~ \Sigma_j.
$$
It follows from \eqref{eq:4fuzj} and \eqref{eq:vj-xjxj} that
$$
0 \leq \frac{f(u(z_j) w_j)}{f(u(z_j)) w_j} \leq 4 ~~~~~~ \textmd{in} ~ \bar B_{R_j},
$$
where
$$
R_j := \mu_j \bigg( \frac{f(u(z_j))}{u(z_j)} \bigg)^\frac{1}{2} \to + \infty ~~~~~~ \textmd{as} ~ j \to \infty.
$$
By the Harnack inequality, for any $R > 0$, there exists a constant $C = C(n, R) > 1$ such that
\begin{equation}\label{eq:Harnack}
1 = w_j (0) \leq \sup_{B_R} w_j \leq C \inf_{B_R} w_j \leq C
\end{equation}
for large $j$. Thus, $\{ w_j \}$ is uniformly bounded in any compact set of $\R^n$. By the standard elliptic interior estimates, $\{ w_j \}$ is bounded in $C_{\rm loc}^{1, \alpha} (\R^n)$ for any $0 < \alpha < 1$, and there exists a nonnegative function $w$ such that, after passing to a subsequence, $w_j \to w$ in $C_{\rm loc}^{1, \alpha} (\R^n)$ for any $0 < \alpha < 1$. Clearly, $w(0) = 1$. By \eqref{eq:Harnack}, we also know that $w > 0$ in $\R^n$.

\medskip

\noindent{\bf Claim 1:} We can assume that $w \not\equiv 1$ in $\R^n$. Otherwise, there is a sequence of $\{ \tilde z_j \} \subset B_{1/2} \setminus \Gamma$ such that if we replace $z_j$ by $\tilde z_j$ in \eqref{eq:wj} and denote the new $w_j$ by $\tilde w_j$, then \eqref{eq:rj2fuzj} holds, and $\tilde w_j$ converges to a nonconstant positive function $\tilde w$ in $C_{\rm loc}^{1, \alpha} (\R^n)$ for any $0 < \alpha < 1$.

\medskip

If $w \equiv 1$ in $\R^n$. Let $N \subset B_{1/2}$ be a tubular neighborhood of $\Gamma$ such that any point of $N$ can be uniquely expressed as the sum $x + v$ where $x \in \Gamma$ and $v \in (T_x \Gamma)^\perp$, the orthogonal complement of the tangent space of $\Gamma$ at $x$. Denote by $\Pi$ the orthogonal projection of $N$ onto $\Gamma$. Then $z_j \in N \setminus \Gamma$ for large $j$. By \eqref{eq:uzj} and the continuity of $u$, there exists
$$
\tilde z_j = \Pi (z_j) + t_j (z_j - \Pi (z_j)) \in N ~~~~~~ \textmd{for some} ~ t_j \in (1, + \infty)
$$
such that
\begin{equation}\label{eq:utzj}
u(\tilde z_j) = \frac{1}{2} u(z_j).
\end{equation}
Hence,
\begin{equation}\label{eq:dtzj}
d(\tilde z_j, \Gamma) = d(z_j, \Gamma) + |z_j - \tilde z_j| > d(z_j, \Gamma) = r_j.
\end{equation}
Since $w_j \to 1$ in $C_{\rm loc}^{1, \alpha} (\R^n)$ for any $0 < \alpha < 1$, by \eqref{eq:utzj},
\begin{equation}\label{eq:tzj-zj}
|\tilde z_j - z_j| \bigg( \frac{f(u(z_j))}{u(z_j)} \bigg)^\frac{1}{2} \to + \infty ~~~~~~ \textmd{as} ~ j \to \infty.
\end{equation}
By \eqref{eq:fuzj} and \eqref{eq:dtzj}, we have
\begin{equation}\label{eq:fgft}
\frac{f(u(z_j))}{u(z_j)} \geq \frac{f(u(\tilde z_j))}{u(\tilde z_j)}.
\end{equation}
By \eqref{eq:t0-b}, \eqref{eq:uzj} and \eqref{eq:utzj}, we also have
\begin{equation}\label{eq:ftg2f}
\frac{f(u(\tilde z_j))}{u(\tilde z_j)} \geq \frac{2 f(u(z_j))}{u(z_j)} \bigg( \frac{u(\tilde z_j)}{u(z_j)} \bigg)^\frac{n + 2}{n - 2} \geq 2^{ - \frac{4}{n - 2} } \frac{f(u(z_j))}{u(z_j)}.
\end{equation}
This together with \eqref{eq:rj2fuzj} and \eqref{eq:dtzj} yields that
$$
d(\tilde z_j, \Gamma)^2 \frac{f(u(\tilde z_j))}{u(\tilde z_j)} \geq 2^{ - \frac{4}{n - 2} } d(z_j, \Gamma)^2 \frac{f(u(z_j))}{u(z_j)} \to + \infty ~~~~~~ \textmd{as} ~ j \to \infty.
$$

Let
$$
\tilde w_j (y) = \frac{1}{u(\tilde z_j)} u \bigg( \tilde z_j + \bigg( \frac{u(\tilde z_j)}{f(u(\tilde z_j))} \bigg)^\frac{1}{2} y \bigg) ~~~~~~ \textmd{for} ~ y \in \tilde\Sigma_j,
$$
where
$$
\tilde\Sigma_j = \bigg\{ y \in \R^n : \tilde z_j + \bigg( \frac{u(\tilde z_j)}{f(u(\tilde z_j))} \bigg)^\frac{1}{2} y \in B_2 \setminus \Gamma \bigg\}.
$$
As before, $\tilde w_j$ satisfies $\tilde w_j (0) = 1$ and
\begin{equation}\label{eq:twj-eq}
- \Delta \tilde w_j = \frac{f(u(\tilde z_j) \tilde w_j)}{f(u(\tilde z_j))} ~~~~~~ \textmd{in} ~ \tilde\Sigma_j.
\end{equation}
For any $R > 0$ and $y \in \bar B_R$, let $z = \tilde z_j + \big( \frac{u(\tilde z_j)}{f(u(\tilde z_j))} \big)^\frac{1}{2} y$. Then, by \eqref{eq:dtzj}-\eqref{eq:ftg2f},
$$
\aligned
d(z, \Gamma) & \geq d(\tilde z_j, \Gamma) - |\tilde z_j - z| \\
& \geq d(\tilde z_j, \Gamma) - \bigg( \frac{u(\tilde z_j)}{f(u(\tilde z_j))} \bigg)^\frac{1}{2} |y| \\
& \geq d(z_j, \Gamma) + \bigg( \frac{u(\tilde z_j)}{f(u(\tilde z_j))} \bigg)^\frac{1}{2} \bigg[ |z_j - \tilde z_j| \bigg( \frac{f(u(\tilde z_j))}{u(\tilde z_j)} \bigg)^\frac{1}{2} - R \bigg] \\
& \geq d(z_j, \Gamma)
\endaligned
$$
for large $j$. Thus, by \eqref{eq:fuzj} and \eqref{eq:ftg2f},
$$
\frac{f(u(z))}{u(z)} \leq \frac{f(u(z_j))}{u(z_j)} \leq 2^\frac{4}{n - 2} \frac{f(u(\tilde z_j))}{u(\tilde z_j)},
$$
which implies that
$$
0 \leq \frac{f(u(\tilde z_j) \tilde w_j)}{f(u(\tilde z_j)) \tilde w_j} \leq 2^\frac{4}{n - 2} ~~~~~~ \textmd{in} ~ \bar B_R
$$
for large $j$. Therefore, after passing to a subsequence, $\tilde w_j$ converges to a positive function $\tilde w$ in $C_{\rm loc}^{1, \alpha} (\R^n)$ for any $0 < \alpha < 1$. We assert that $\tilde w \not\equiv 1$ in $\R^n$.

Assume $\tilde w \equiv 1$ in $\R^n$. Then by \eqref{eq:utzj}, $u(z_j) \geq u(z) \geq u(z_j)/4$ for large $j$ and $y \in \bar B_1$, where $z = \tilde z_j + \big( \frac{u(\tilde z_j)}{f(u(\tilde z_j))} \big)^\frac{1}{2} y$. Thus, by \eqref{eq:t0-b}, \eqref{eq:uzj} and \eqref{eq:fgft}, we have for $y \in \bar B_1$,
$$
\aligned
\frac{f(u(\tilde z_j) \tilde w_j (y))}{f(u(\tilde z_j))} = \frac{f(u(z))}{f(u(\tilde z_j))} & \geq \frac{f(u(z_j))}{f(u(\tilde z_j))} \bigg( \frac{u(z)}{u(z_j)} \bigg)^\frac{n + 2}{n - 2} \\
& \geq \frac{u(z_j)}{u(\tilde z_j)} \bigg( \frac{u(z)}{u(z_j)} \bigg)^\frac{n + 2}{n - 2} \geq 4^{ - \frac{n + 2}{n - 2} }.
\endaligned
$$
Integrating \eqref{eq:twj-eq}, we get
$$
- \int_{\partial B_1} \partial_\nu \tilde w_j \,{\rm d}S = \int_{B_1} \frac{f(u(\tilde z_j) \tilde w_j (y))}{f(u(\tilde z_j))} \,{\rm d}y \geq 4^{ - \frac{n + 2}{n - 2} } |B_1|,
$$
which reaches a contradicton, as $\tilde w \equiv 1$ in $\R^n$, the left hand side tends to $0$ as $j \to \infty$. Claim 1 is proved. Consequently, we can always assume that $w \not\equiv 1$ in $\R^n$.

Since $w \not\equiv 1$ in $\R^n$, there exists a constant $c_0 > 0$ independent of $j$ such that
$$
\int_{B_1} \frac{f(u(z_j) w_j (y))}{f(u(z_j))} \,{\rm d}y \geq c_0 > 0.
$$
By changing variables, we have
\begin{equation}\label{eq:intcv}
\bigg( \frac{f(u(z_j))}{ u(z_j)^\frac{n}{n - 2} } \bigg)^\frac{n - 2}{2} \int_{B_{ (u(z_j)/f(u(z_j)))^{1/2} } (z_j)} f(u(z)) \,{\rm d}z \geq c_0 > 0.
\end{equation}
By \cite[Lemma 2.3]{CLin01}, we know that $f(u) \in L^1 (B_1)$. Thus, by \eqref{eq:rj2fuzj} and \eqref{eq:intcv}, we obtain
\begin{equation}\label{eq:uzjsub}
\lim_{j \to \infty} u(z_j)^{ - \frac{n}{n - 2} } f(u(z_j)) = + \infty.
\end{equation}

On the other hand, we will show that, for every $x \in \R^n$ and $\lambda > 0$,
\begin{equation}\label{eq:moving-w}
w(y) \geq w_{x, \lambda} (y) ~~~~~~ \forall\, y \in \R^n \setminus B_\lambda (x),
\end{equation}
where $w_{x, \lambda}$ is the Kelvin transform of $w$ with respect to the sphere $\partial B_\lambda (x)$, defined as in \eqref{eq:uK}. By \cite[Lemma 11.2]{LZhang03}, \eqref{eq:moving-w} implies that $w \equiv 1$ in $\R^n$. This contadicts Claim 1.

Let us fix $x_0 \in \R^n$ and $\lambda_0 > 0$ arbitrarily. Then for all $j$ large, we have
$$
|x_0| < \frac{R_j}{10} ~~~~~~ \textmd{and} ~~~~~~ \lambda_0 < \frac{R_j}{10}.
$$
Denote
$$
M_j := \bigg( \frac{f(u(z_j))}{u(z_j)} \bigg)^\frac{1}{2}
$$
and
$$
\Gamma_j := \Big\{ y \in \R^n : z_j + M_j^{- 1} y \in \Gamma \Big\}.
$$
We are going to show that for all sufficiently large $j$,
\begin{equation}\label{eq:moving-wj}
w_j (y) \geq (w_j)_{x_0, \lambda_0} (y) ~~~~~~ \forall\, y \in B_{M_j} (x_0) \setminus (B_{\lambda_0} (x_0) \cup \Gamma_j).
\end{equation}
Then \eqref{eq:moving-w} follows from \eqref{eq:moving-wj} by sending $j \to \infty$. Notice that for large $j$, we always have $\Gamma_j \subset B_{M_j} (x_0)$ and $B_{\lambda_0} (x_0) \subset B_{M_j} (x_0) \setminus \Gamma_j \subset \Sigma_j$.

\medskip

\noindent{\bf Claim 2:} There exists a real number $\lambda_2 \in (0, \lambda_0)$ independent of (large) $j$ such that for any $0 < \lambda < \lambda_2$, we have
\begin{equation}\label{eq:claim2-b}
w_j (y) \geq (w_j)_{x_0, \lambda} (y) ~~~~~~ \forall\, y \in B_{M_j} (x_0) \setminus (B_\lambda (x_0) \cup \Gamma_j).
\end{equation}
The proof of Claim 2 consists of two steps.

\medskip

{\it Step 1.} We show that there exists $\lambda_1 \in (0, \lambda_0)$ independent of (large) $j$ such that for any $0 < \lambda < \lambda_1$,
$$
w_j (y) \geq (w_j)_{x_0, \lambda} (y) ~~~~~~ \forall\, y \in B_{\lambda_1} (x_0) \setminus B_\lambda (x_0).
$$
Since $w_j \to w$ in $C_{\rm loc}^{1, \alpha} (\R^n)$ for any $0 < \alpha < 1$ and $w$ is positive in $\R^n$, we know that $|\nabla \ln w_j| \leq C_0$ in $B_1 (x_0)$ for all $j$ sufficiently large. Then we have
\begin{equation}\label{eq:drwj}
\aligned
\frac{ {\rm d} }{ {\rm d}r } \Big( r^\frac{n - 2}{2} w_j (x_0 + r \theta) \Big) & = r^\frac{n - 4}{2} w_j (x_0 + r \theta) \bigg( \frac{n - 2}{2} + \frac{\nabla w_j \cdot \theta}{w_j} r \bigg) \\
& \geq r^\frac{n - 4}{2} w_j (x_0 + r \theta) \bigg( \frac{n - 2}{2} - C_0 r \bigg) > 0
\endaligned
\end{equation}
for all $0 < r < \lambda_1 := \min \big\{ 1, \frac{n - 2}{2 C_0}, \frac{\lambda_0}{2} \big\}$ and $\theta \in \Sp^{n - 1}$. For any $0 < \lambda < \lambda_1$ and $y \in B_{\lambda_1} (x_0) \setminus B_\lambda (x_0)$, let $\theta = \frac{y - x_0}{|y - x_0|}$, $r_1 = |y - x_0|$ and $r_2 = \frac{\lambda^2 r_1}{|y - x_0|^2}$. Using \eqref{eq:drwj} we have
$$
r_1^\frac{n - 2}{2} w_j (x_0 + r_1 \theta) \geq r_2^\frac{n - 2}{2} w_j (x_0 + r_2 \theta).
$$
That is, for any $0 < \lambda < \lambda_1$,
\begin{equation}\label{eq:la1-b}
w_j (y) \geq (w_j)_{x_0, \lambda} (y) ~~~~~~ \forall\, y \in B_{\lambda_1} (x_0) \setminus B_\lambda (x_0).
\end{equation}

{\it Step 2.} We show that there exists $\lambda_2 \in (0, \lambda_1]$ independent of (large) $j$ such that \eqref{eq:claim2-b} holds for all $0 < \lambda < \lambda_2$.

\medskip

Let $\Phi_j (y) = \big( \frac{\lambda_1}{|y - x_0|} \big)^{n - 2} \inf_{\partial B_{\lambda_1} (x_0)} w_j$. Then
$$
- \Delta \Phi_j = 0 ~~~~~~ \textmd{in} ~ \R^n \setminus B_{\lambda_1} (x_0).
$$
Obviously, $w_j \geq \Phi_j$ on $\partial B_{\lambda_1} (x_0)$. Now we examine them on the boundary $\partial B_{M_j} (x_0)$. Since $u \geq \inf_{\partial B_2} u > 0$ in $\bar B_2 \setminus \Gamma$, it follows that for $y \in \partial B_{M_j} (x_0)$,
$$
w_j (y) \geq \frac{\inf_{\partial B_2} u}{u(z_j)} > 0.
$$
Thus, by \eqref{eq:uzjsub}, for $y \in \partial B_{M_j} (x_0)$ and large $j$,
\begin{equation}\label{eq:Phijwj}
\bigg( \frac{\lambda_1}{|y - x_0|} \bigg)^{n - 2} \inf_{\partial B_{\lambda_1} (x_0)} w_j \leq C \bigg( \frac{f(u(z_j))}{ u(z_j)^\frac{n}{n - 2} } \bigg)^{ - \frac{n - 2}{2} } \frac{1}{u(z_j)} \ll \frac{\inf_{\partial B_2} u}{u(z_j)} \leq w_j (y),
\end{equation}
where we used \eqref{eq:Harnack} in the first inequality. Notice that $(w_j - \Phi_j)$ is superharmonic and bounded from below in $B_{M_j} (x_0) \setminus (B_{\lambda_1} (x_0) \cup \Gamma_j)$, by Lemma \ref{lem:suphar-int},
$$
w_j (y) \geq \bigg( \frac{\lambda_1}{|y - x_0|} \bigg)^{n - 2} \inf_{\partial B_{\lambda_1} (x_0)} w_j > 0 ~~~~~~ \forall\, y \in B_{M_j} (x_0) \setminus (B_{\lambda_1} (x_0) \cup \Gamma_j).
$$
Let
$$
\lambda_2 := \lambda_1 \bigg( \frac{\inf_{\partial B_{\lambda_1} (x_0)} w}{2 \sup_{B_{\lambda_1} (x_0)} w} \bigg)^\frac{1}{n - 2} \leq \lambda_1 \bigg( \frac{\inf_{\partial B_{\lambda_1} (x_0)} w_j}{\sup_{B_{\lambda_1} (x_0)} w_j} \bigg)^\frac{1}{n - 2} \leq \lambda_1.
$$
Then for any $0 < \lambda < \lambda_2$ and $y \in B_{M_j} (x_0) \setminus (B_{\lambda_1} (x_0) \cup \Gamma_j)$, we have
$$
\aligned
(w_j)_{x_0, \lambda} (y) & = \bigg( \frac{\lambda}{|y - x_0|} \bigg)^{n - 2} w_j (y^{x_0, \lambda}) \\
& \leq \bigg( \frac{\lambda_2}{|y - x_0|} \bigg)^{n - 2} \sup_{B_{\lambda_1} (x)} w_j \\
& \leq \bigg( \frac{\lambda_1}{|y - x_0|} \bigg)^{n - 2} \inf_{\partial B_{\lambda_1} (x_0)} w_j \leq w_j (y).
\endaligned
$$
This together with \eqref{eq:la1-b} leads to Claim 2.

Now we define
$$
\bar\lambda_j := \sup \{ \mu \in (0, \lambda_0) : w_j (y) \geq (w_j)_{x_0, \lambda} (y),~ \forall\, 0 < \lambda < \mu,~ y \in B_{M_j} (x_0) \setminus (B_\lambda (x_0) \cup \Gamma_j) \},
$$
where $\lambda_0$ is fixed at the beginning. By Claim 2, $\bar\lambda_j$ is well-defined and $0 < \lambda_2 \leq \bar\lambda_j \leq \lambda_0$ for all sufficiently large $j$.

\medskip

\noindent{\bf Claim 3:} $\bar\lambda_j = \lambda_0$ for all sufficiently large $j$.

\medskip

Suppose $\bar\lambda_j < \lambda_0$. By the definition of $\bar\lambda_j$,
\begin{equation}\label{eq:bala-b}
w_j (y) \geq (w_j)_{x_0, \bar\lambda_j} (y) ~~~~~~ \forall\, y \in B_{M_j} (x_0) \setminus (B_{\bar\lambda_j} (x_0) \cup \Gamma_j).
\end{equation}
Furthermore, we wish to prove
\begin{equation}\label{eq:strong-b}
w_j (y) > (w_j)_{x_0, \bar\lambda_j} (y) ~~~~~~ \forall\, y \in \bar B_{M_j} (x_0) \setminus (\bar B_{\bar\lambda_j} (x_0) \cup \Gamma_j).
\end{equation}
First, by \eqref{eq:uzjsub}, for $y \in \partial B_{M_j} (x_0)$ and large $j$, we have
\begin{equation}\label{eq:pBMj}
\aligned
(w_j)_{x_0, \bar\lambda_j} (y) & = \bigg( \frac{\bar\lambda_j}{|y - x_0|} \bigg)^{n - 2} w_j (y^{x_0, \bar\lambda_j}) \\
& \leq \bigg( \frac{\lambda_0}{|y - x_0|} \bigg)^{n - 2} \sup_{\bar B_{\lambda_0} (x_0)} w_j \\
& \leq C \bigg( \frac{f(u(z_j))}{ u(z_j)^\frac{n}{n - 2} } \bigg)^{ - \frac{n - 2}{2} } \frac{1}{u(z_j)} \\
& \ll \frac{\inf_{\partial B_2} u}{u(z_j)} \leq w_j (y)
\endaligned
\end{equation}
as the proof of \eqref{eq:Phijwj}.

Let
\begin{equation}\label{eq:Omj}
\Omega_j := \Big\{ y \in B_{M_j} (x_0) \setminus (B_{\bar\lambda_j} (x_0) \cup \Gamma_j) : w_j (y) < w_j (y^{x_0, \bar\lambda_j}) \Big\}.
\end{equation}
Clearly, $\Omega_j$ is an open subset of $B_{M_j} (x_0) \setminus (\bar B_{\bar\lambda_j} (x_0) \cup \Gamma_j)$. Notice that for $y \in \Omega_j$ and large $j$,
\begin{equation}\label{eq:wjyK-C}
w_j (y^{x_0, \bar\lambda_j}) \geq \inf_{B_{\bar\lambda_j} (x_0)} w_j \geq \inf_{B_{\lambda_0} (x_0)} w_j \geq \frac{1}{C(n, |x_0| + \lambda_0)} > 0,
\end{equation}
where we used \eqref{eq:Harnack} in the last inequality. By \eqref{eq:uzj} and \eqref{eq:wjyK-C}, we always have $u(z_j) w_j (y^{x_0, \bar\lambda_j}) \gg t_0 + 1$ for $y \in \Omega_j$ and large $j$.

For $y \in \Omega_j$, we have
\begin{equation}\label{eq:sup-b}
\aligned
& - \Delta (w_j - (w_j)_{x_0, \bar\lambda_j}) \\
& = \frac{f(u(z_j) w_j)}{f(u(z_j))} - \bigg( \frac{\bar\lambda_j}{|y - x_0|} \bigg)^{n + 2} \frac{f(u(z_j) w_j (y^{x_0, \bar\lambda_j}))}{f(u(z_j))} \\
& = (u(z_j) w_j)^{ - \frac{n + 2}{n - 2} } \frac{f(u(z_j) w_j)}{f(u(z_j))} (u(z_j) w_j)^\frac{n + 2}{n - 2} - \bigg( \frac{\bar\lambda_j}{|y - x_0|} \bigg)^{n + 2} \frac{f(u(z_j) w_j (y^{x_0, \bar\lambda_j}))}{f(u(z_j))}.
\endaligned
\end{equation}
In order to apply the monotone assumption \eqref{eq:t0-b} in the above, we need to compare the values of $t_0$ and $u(z_j) w_j (y)$ for $y \in \Omega_j$. We shall divide the discussion into two subregions: $\Omega_j'$ and $\Omega_j''$. For convenience, we assume that $t_0 > \inf_{\partial B_2} u > 0$ in \eqref{eq:t0-b} (If $\inf_{\partial B_2} u \geq t_0 \geq 0$, we only have the subregion $\Omega_j'$). Let the subregion $\Omega_j'$ be defined by
$$
\Omega_j' := \{ y \in \Omega_j : u(z_j) w_j (y) \geq t_0 \}.
$$
For $y \in \Omega_j'$, by \eqref{eq:t0-b} and \eqref{eq:bala-b}, we can write \eqref{eq:sup-b} to
\begin{equation}\label{eq:sup-b1}
\aligned
& - \Delta (w_j - (w_j)_{x_0, \bar\lambda_j}) \\
& \geq \bigg[ \bigg( \frac{w_j (y)}{w_j (y^{x_0, \bar\lambda_j})} \bigg)^\frac{n + 2}{n - 2} - \bigg( \frac{\bar\lambda_j}{|y - x_0|} \bigg)^{n + 2} \bigg] \frac{f(u(z_j) w_j (y^{x_0, \bar\lambda_j}))}{f(u(z_j))} \\
& \geq \bigg( \frac{\bar\lambda_j}{|y - x_0|} \bigg)^{n + 2} \bigg[ \bigg( \frac{w_j (y)}{(w_j)_{x_0, \bar\lambda_j} (y)} \bigg)^\frac{n + 2}{n - 2} - 1 \bigg] \frac{f(u(z_j) w_j (y^{x_0, \bar\lambda_j}))}{f(u(z_j))} \geq 0.
\endaligned
\end{equation}
Let the subregion $\Omega_j''$ be defined by
$$
\Omega_j'' := \{ y \in \Omega_j : u(z_j) w_j (y) < t_0 \}.
$$
Then, for $y \in \Omega_j''$ we distinguish two cases.

\medskip

{\it Case 1.} $\lim\nolimits_{t \to + \infty} t^{ - \frac{n + 2}{n - 2} } f(t) = 0$. By \eqref{eq:uzj}, \eqref{eq:bala-b} and \eqref{eq:wjyK-C}, we have
\begin{equation}\label{eq:sup-b2}
\aligned
& - \Delta (w_j - (w_j)_{x_0, \bar\lambda_j}) \\
& \geq \frac{ (u(z_j) w_j)^\frac{n + 2}{n - 2} }{f(u(z_j))} \inf_{[\inf_{\partial B_2} u,\, t_0]} \frac{f(t)}{ t^\frac{n + 2}{n - 2} } - \bigg( \frac{\bar\lambda_j}{|y - x_0|} \bigg)^{n + 2} \frac{(u(z_j) w_j (y^{x_0, \bar\lambda_j}))^\frac{n + 2}{n - 2} o(1)}{f(u(z_j))} \\
& \geq \frac{ (u(z_j) w_j)^\frac{n + 2}{n - 2} }{f(u(z_j))} \bigg( \inf_{[\inf_{\partial B_2} u,\, t_0]} \frac{f(t)}{ t^\frac{n + 2}{n - 2} } - o(1) \bigg) \geq 0.
\endaligned
\end{equation}

{\it Case 2.} $\lim\nolimits_{t \to + \infty} t^{ - \frac{n + 2}{n - 2} } f(t) > 0$. By \eqref{eq:uzj}, \eqref{eq:Harnack} and \eqref{eq:wjyK-C}, we have
\begin{equation}\label{eq:sup-b3}
\aligned
& - \Delta (w_j - (w_j)_{x_0, \bar\lambda_j}) \\
& \geq - \bigg( \frac{\bar\lambda_j}{|y - x_0|} \bigg)^{n + 2} \frac{f(u(z_j) w_j (y^{x_0, \bar\lambda_j}))}{f(u(z_j))} \\
& \geq - \bigg( \frac{\bar\lambda_j}{|y - x_0|} \bigg)^{n + 2} \frac{ (u(z_j) w_j (y^{x_0, \bar\lambda_j}))^\frac{n + 2}{n - 2} }{ u(z_j)^\frac{n + 2}{n - 2} } (1 + o(1)) \\
& \geq - \bigg( \frac{\bar\lambda_j}{|y - x_0|} \bigg)^{n + 2} w_j (y^{x_0, \bar\lambda_j})^\frac{n + 2}{n - 2} (1 + o(1)) \geq - \frac{C_1}{ |y - x_0|^{n + 2} }
\endaligned
\end{equation}
for some constant $C_1 > 0$ independent of $j$. Recall that $\bar\lambda_j \geq \lambda_2 > 0$, where the latter is independent of $j$. Then, we have
$$
\aligned
t_0 > u(z_j) w_j (y) & \geq u(z_j) \bigg( \frac{\bar\lambda_j}{|y - x_0|} \bigg)^{n - 2} w_j (y^{x_0, \bar\lambda_j}) \\
& \geq u(z_j) \bigg( \frac{\lambda_2}{|y - x_0|} \bigg)^{n - 2} \frac{1}{C(n, |x_0| + \lambda_0)} > 0.
\endaligned
$$
Thus, $\Omega_j''$ is an open subset of a spherical shell
$$
S_j := \bigg\{ x \in \R^n : \frac{1}{C_2} u(z_j)^\frac{1}{n - 2} < |x - x_0| < M_j \bigg\} \subset B_{M_j} (x_0) \setminus B_{\bar\lambda_j} (x_0)
$$
for some constant $C_2 > 0$ independent of $j$.

Set
$$
\varphi_j (y) := \frac{\inf_{\partial B_2} u}{2 u(z_j)} \bigg[ 1 - \bigg( \frac{\bar\lambda_j}{|y - x_0|} \bigg)^{n - 2} \bigg] - C_1 \int_{S_j} \frac{G(y, x; x_0, \bar\lambda_j)}{ |y - x_0|^{n + 2} } \,{\rm d}x
$$
for $y \in B_{M_j} (x_0) \setminus B_{\bar\lambda_j} (x_0)$, where
$$
G(y, x; x_0, \bar\lambda_j) = \frac{1}{n (n - 2) |B_1|} \bigg[ \frac{1}{ |y - x|^{n - 2} } - \bigg( \frac{\bar\lambda_j}{|y - x_0|} \bigg)^{n - 2} \frac{1}{ |y^{x_0, \bar\lambda_j} - x|^{n - 2} } \bigg].
$$
A straightforward calculation gives for $y \in B_{M_j} (x_0) \setminus B_{\bar\lambda_j} (x_0)$,
$$
0 \leq \int_{S_j} \frac{G(y, x; x_0, \bar\lambda_j)}{ |y - x_0|^{n + 2} } \,{\rm d}x \leq \frac{C}{ u(z_j)^\frac{n}{n - 2} } \bigg[ 1 - \bigg( \frac{\bar\lambda_j}{|y - x_0|} \bigg)^{n - 2} \bigg].
$$
Hence, for $y \in B_{M_j} (x_0) \setminus \bar B_{\bar\lambda_j} (x_0)$ and sufficiently large $j$,
\begin{equation}\label{eq:phij-est}
0 < \frac{\inf_{\partial B_2} u}{4 u(z_j)} \bigg[ 1 - \bigg( \frac{\bar\lambda_j}{|y - x_0|} \bigg)^{n - 2} \bigg] \leq \varphi_j (y) \leq \frac{\inf_{\partial B_2} u}{2 u(z_j)} \bigg[ 1 - \bigg( \frac{\bar\lambda_j}{|y - x_0|} \bigg)^{n - 2} \bigg].
\end{equation}

By \eqref{eq:sup-b1}-\eqref{eq:sup-b3} and the definition of $\varphi_j$,
$$
- \Delta (w_j - (w_j)_{x_0, \bar\lambda_j} - \varphi_j) \geq 0 ~~~~~~ \textmd{in} ~ \Omega_j.
$$
Now we estimate $(w_j - (w_j)_{x_0, \bar\lambda_j} - \varphi_j)$ on $\partial\Omega_j \setminus \Gamma_j$. If $y \in \partial B_{\bar\lambda_j} (x_0)$, then
$$
(w_j - (w_j)_{x_0, \bar\lambda_j} - \varphi_j) (y) = 0.
$$
If $y \in \partial B_{M_j} (x_0)$, by \eqref{eq:pBMj} and \eqref{eq:phij-est} we obtain
$$
\aligned
& (w_j - (w_j)_{x_0, \bar\lambda_j} - \varphi_j) (y) \\
& \geq \frac{\inf_{\partial B_2} u}{u(z_j)} - C \bigg( \frac{f(u(z_j))}{ u(z_j)^\frac{n}{n - 2} } \bigg)^{ - \frac{n - 2}{2} } \frac{1}{u(z_j)} - \frac{\inf_{\partial B_2} u}{2 u(z_j)} > 0.
\endaligned
$$
Similarly, if $y \in B_{M_j} (x_0) \setminus (\bar B_{\lambda_j} (x_0) \cup \Gamma_j)$ and $w_j (y) = w_j (y^{x_0, \bar\lambda_j})$, then
$$
\aligned
& (w_j - (w_j)_{x_0, \bar\lambda_j} - \varphi_j) (y) \\
& \geq \bigg[ 1 - \bigg( \frac{\bar\lambda_j}{|y - x_0|} \bigg)^{n - 2} \bigg] w_j (y^{x_0, \bar\lambda_j}) - \frac{\inf_{\partial B_2} u}{2 u(z_j)} \bigg[ 1 - \bigg( \frac{\bar\lambda_j}{|y - x_0|} \bigg)^{n - 2} \bigg] \\
& \geq \bigg[ 1 - \bigg( \frac{\bar\lambda_j}{|y - x_0|} \bigg)^{n - 2} \bigg] \bigg( \frac{\inf_{\partial B_2} u}{u(z_j)} - \frac{\inf_{\partial B_2} u}{2 u(z_j)} \bigg) > 0.
\endaligned
$$
By Lemma \ref{lem:suphar-int}, we have
\begin{equation}\label{eq:wjphij}
(w_j - (w_j)_{x_0, \bar\lambda_j}) (y) \geq \varphi_j (y) > 0 ~~~~~~ \forall\, y \in \Omega_j.
\end{equation}
Clearly, \eqref{eq:wjphij} holds automatically in $(\bar B_{M_j} (x_0) \setminus (\bar B_{\bar\lambda_j} (x_0) \cup \Gamma_j)) \setminus \Omega_j$. Inequality \eqref{eq:strong-b} is proved.

We also wish to show
\begin{equation}\label{eq:hopf-b}
\partial_\nu (w_j - (w_j)_{x_0, \bar\lambda_j}) (y) > 0 ~~~~~~ \forall\, y \in \partial B_{\bar\lambda_j} (x_0),
\end{equation}
where $\nu$ is the unit outward normal vector on $\partial B_{\bar\lambda_j} (x_0)$. For $z_0 \in \partial B_{\bar\lambda_j} (x_0)$ satisfying $\partial_\nu (w_j - (w_j)_{x_0, \bar\lambda_j}) (z_0) < (n - 2) \bar\lambda_j^{- 1} w_j (z_0)$, by a direct calculation
$$
(n - 2) \bar\lambda_j^{- 1} w_j (z_0) > \partial_\nu (w_j - (w_j)_{x_0, \bar\lambda_j}) (z_0) = (n - 2) \bar\lambda_j^{- 1} w_j (z_0) + 2 \partial_\nu w_j(z_0).
$$
Thus, $\partial_\nu w_j (z_0) < 0$ and
$$
\partial_\nu (w_j (\cdot^{x_0, \bar\lambda_j}) - w_j) (z_0) = - 2 \partial_\nu w_j (z_0) > 0.
$$
Recall that $w_j = (w_j)_{x_0, \bar\lambda_j} = w_j (\cdot^{x_0, \bar\lambda_j})$ on $\partial B_{\bar\lambda_j} (x_0)$. Then there exists a small $\delta > 0$ such that $B_\delta (z_0) \setminus \bar B_{\bar\lambda_j} (x_0) \subset \Omega_j'$. By \eqref{eq:strong-b}, \eqref{eq:sup-b1} and the Hopf lemma, $\partial_\nu (w_j - (w_j)_{x_0, \bar\lambda_j}) (z_0) > 0$.

By \eqref{eq:hopf-b} and the compactness of $\partial B_{\bar\lambda_j} (x_0)$, we have
$$
\partial_\nu (w_j - (w_j)_{x_0, \bar\lambda_j}) \big|_{\partial B_{\bar\lambda_j} (x_0)} \geq b > 0.
$$
By the continuity of $w_j$ and $\nabla w_j$, there exists $R \in (\bar\lambda_j, \lambda_0)$ such that for any $\bar\lambda_j \leq \lambda \leq r \leq R$,
$$
\partial_\nu (w_j - (w_j)_{x_0, \bar\lambda_j}) \big|_{\partial B_r (x_0)} \geq \frac{b}{2} > 0.
$$
Consequently, since $w_j = (w_j)_{x_0, \lambda}$ on $\partial B_\lambda (x_0)$, we have for any $\bar\lambda_j \leq \lambda < R$,
\begin{equation}\label{eq:BR-b}
w_j (y) > (w_j)_{x_0, \lambda} (y) ~~~~~~ \forall\, y \in B_R (x_0) \setminus \bar B_\lambda (x_0).
\end{equation}
By \eqref{eq:phij-est} and \eqref{eq:wjphij}, for $y \in B_{M_j} (x_0) \setminus (B_R (x_0) \cup \Gamma_j)$ and large $j$, we have
$$
(w_j - (w_j)_{x_0, \bar\lambda_j}) (y) \geq \frac{\inf_{\partial B_2} u}{4 u(z_j)} \bigg[ 1 - \bigg( \frac{\bar\lambda_j}{|y - x_0|} \bigg)^{n - 2} \bigg] \geq \frac{\inf_{\partial B_2} u}{4 u(z_j)} \bigg[ 1 - \bigg( \frac{\bar\lambda_j}{R} \bigg)^{n - 2} \bigg] > 0.
$$
Therefore, for $y \in B_{M_j} (x_0) \setminus (B_R (x_0) \cup \Gamma_j)$,
\begin{equation}\label{eq:wjc-b2}
(w_j - (w_j)_{x_0, \lambda}) (y) \geq \frac{\inf_{\partial B_2} u}{4 u(z_j)} \bigg[ 1 - \bigg( \frac{\bar\lambda_j}{R} \bigg)^{n - 2} \bigg] - ((w_j)_{x_0, \lambda} - (w_j)_{x_0, \bar\lambda_j}) (y).
\end{equation}
By the uniform continuity of $w_j$ on $\bar B_R (x_0)$, there exists $0 < \varepsilon_j \leq R - \bar\lambda_j$ such that for all $\bar\lambda_j \leq \lambda < \bar\lambda_j + \varepsilon_j$ and $y \in B_{M_j} (x_0) \setminus (B_R (x_0) \cup \Gamma_j)$,
$$
\bigg| \lambda^{n - 2} w_j \bigg( x_0 + \frac{\lambda^2 (y - x_0)}{|y - x_0|^2} \bigg) - \bar\lambda_j^{n - 2} w_j \bigg( x_0 + \frac{\bar\lambda_j^2 (y - x_0)}{|y - x_0|^2} \bigg) \bigg| \leq \frac{R^{n - 2} \inf_{\partial B_2} u}{8 u(z_j)} \bigg[ 1 - \bigg( \frac{\bar\lambda_j}{R} \bigg)^{n - 2} \bigg].
$$
This together with \eqref{eq:wjc-b2} implies that for all $\bar\lambda_j \leq \lambda < \bar\lambda_j + \varepsilon_j$,
$$
w_j (y) > (w_j)_{x_0, \lambda} (y) ~~~~~~ \forall\, y \in B_{M_j} (x_0) \setminus (B_R (x_0) \cup \Gamma_j).
$$
This and \eqref{eq:BR-b} contradict the definition of $\bar\lambda_j$. We finished the proof of Claim 3. Recalling the limit analysis before Claim 2 (See \eqref{eq:moving-w} and \eqref{eq:moving-wj}), the proof of Proposition \ref{prop:blowup} is completed.
\end{proof}

Now, we use an improved moving sphere method and Proposition \ref{prop:blowup} to prove Theorem \ref{thm:asym}.

\begin{proof}[Proof of Theorem \ref{thm:asym}] Without loss of generality, we may assume that $\Omega = B_2$, $\Gamma \subset B_{1/2}$, $u \in C^2 (\bar B_2 \setminus \Gamma)$ and $f$ is continuous on $(0, + \infty)$. Because $f$ is positive, we know that $u$ is superharmonic in $B_2 \setminus \Gamma$. By Lemma \ref{lem:suphar-int}, $u \geq \inf_{\partial B_2} u > 0$ in $\bar B_2 \setminus \Gamma$. We assert that
\begin{equation}\label{eq:dn-2u-0}
\lim_{d(x, \Gamma) \to 0} d(x, \Gamma)^{n - 2} u(x) = 0.
\end{equation}
We prove \eqref{eq:dn-2u-0} by contradiction. Suppose there exists a constant $\varepsilon_0 > 0$ and a sequence $\{ x_i \} \subset B_{1/2} \setminus \Gamma$ such that $d(x_i, \Gamma) \to 0$ as $i \to \infty$, but
\begin{equation}\label{eq:dn-2u-N}
d(x_i, \Gamma)^{n - 2} u(x_i) > \varepsilon_0 ~~~~~~ \textmd{for all} ~ i \in \N_+.
\end{equation}
Then, by Proposition \ref{prop:blowup} and \eqref{eq:dn-2u-N} we have
$$
\aligned
\frac{f(u(x_i))}{ u(x_i)^\frac{n}{n - 2} } & \leq \frac{f(u(x_i))}{u(x_i)} u(x_i)^{ - \frac{2}{n - 2} } \\
& \leq C d(x_i, \Gamma)^{- 2} (\varepsilon_0 d(x_i, \Gamma)^{2 - n})^{ - \frac{2}{n - 2} } \leq C \varepsilon_0^{ - \frac{2}{n - 2} },
\endaligned
$$
which is contradicted by \eqref{eq:grow} since $u(x_i) \to + \infty$ as $i \to \infty$. Thus, the limit \eqref{eq:dn-2u-0} holds.

We will show that there exists a small constant $\rho \in (0, d(\Gamma, \partial B_{1/2})/2)$ such that for every $x$ satisfying $0 < d(x, \Gamma) \leq \rho$ and $\lambda \in (0, d(x, \Gamma))$,
\begin{equation}\label{eq:moving-a}
u(y) \geq u_{x, \lambda} (y) ~~~~~~ \forall\, y \in B_1 \setminus (B_\lambda (x) \cup \Gamma),
\end{equation}
where $u_{x, \lambda}$ is defined as in \eqref{eq:uK}.

First of all, for every $x$ satisfying $0 < d(x, \Gamma) < d(\Gamma, \partial B_{1/2})/2$, since $u \in C^2 (\bar B_2 \setminus \Gamma)$ is positive, we can suppose
$$
|\nabla \ln u| \leq C_0 ~~~~~~ \textmd{in} ~ B_{d(x, \Gamma)/2} (x)
$$
for some constant $C_0 > 0$. Then we have
\begin{equation}\label{eq:dru-a}
\aligned
\frac{ {\rm d} }{ {\rm d}r } \Big( r^\frac{n - 2}{2} u(x + r \theta) \Big) & = r^\frac{n - 4}{2} u(x + r \theta) \bigg( \frac{n - 2}{2} + \frac{\nabla u \cdot \theta}{u} r \bigg) \\
& \geq r^\frac{n - 4}{2} u(x + r \theta) \bigg( \frac{n - 2}{2} - C_0 r \bigg) > 0
\endaligned
\end{equation}
for all $0 < r < \lambda_1 := \min\big\{ \frac{n - 2}{2 C_0}, \frac{d(x, \Gamma)}{2} \big\}$ and $\theta \in \Sp^{n - 1}$. For any $0 < \lambda < \lambda_1$ and $y \in B_{\lambda_1} (x) \setminus B_\lambda (x)$, let $\theta = \frac{y - x}{|y - x|}$, $r_1 = |y - x|$ and $r_2 = \frac{\lambda^2 r_1}{|y - x|^2}$. Using \eqref{eq:dru-a} we have
$$
r_1^\frac{n - 2}{2} u(x + r_1 \theta) \geq r_2^\frac{n - 2}{2} u(x + r_2 \theta).
$$
That is, for any $0 < \lambda < \lambda_1$,
\begin{equation}\label{eq:la1-a}
u(y) \geq u_{x, \lambda} (y) ~~~~~~ \forall\, y \in B_{\lambda_1} (x) \setminus B_\lambda (x).
\end{equation}
Furthermore, we may let $\lambda_1 > 0$ be even smaller such that for $y \in \partial B_1$,
$$
u(y) \geq \inf_{\partial B_2} u \geq (2 \lambda_1)^{n - 2} \sup_{B_{d(x, \Gamma)/2} (x)} u \geq \bigg( \frac{\lambda_1}{|y - x|} \bigg)^{n - 2} \inf_{B_{\lambda_1} (x)} u > 0.
$$
Obviously, the above inequality holds on $\partial B_{\lambda_1} (x)$. By Lemma \ref{lem:suphar-int}, we have
$$
u(y) \geq \bigg( \frac{\lambda_1}{|y - x|} \bigg)^{n - 2} \inf_{\partial B_{\lambda_1} (x)} u > 0 ~~~~~~ \forall\, y \in B_1 \setminus (B_{\lambda_1} (x) \cup \Gamma).
$$
Take
$$
\lambda_2 := \lambda_1 \bigg( \frac{\inf_{\partial B_{\lambda_1} (x)} u}{\sup_{B_{\lambda_1} (x) } u} \bigg)^\frac{1}{n - 2} \leq \lambda_1.
$$
Then for all $0 < \lambda < \lambda_2$ and $y \in B_1 \setminus (B_{\lambda_1} (x) \cup \Gamma)$,
$$
\aligned
u_{x, \lambda} (y) & = \bigg( \frac{\lambda}{|y - x|} \bigg)^{n - 2} u(y^{x, \lambda}) \\
& \leq \bigg( \frac{\lambda_2}{|y - x|} \bigg)^{n - 2} \sup_{B_{\lambda_1} (x)} u \\
& \leq \bigg( \frac{\lambda_1}{|y - x|} \bigg)^{n - 2} \inf_{\partial B_{\lambda_1} (x)} u \leq u(y).
\endaligned
$$
Combining \eqref{eq:la1-a} with the above, we obtain that for all $0 < \lambda < \lambda_2$,
$$
u(y) \geq u_{x, \lambda} (y) ~~~~~~ \forall\, y \in B_1 \setminus (B_\lambda (x) \cup \Gamma).
$$
Therefore,
$$
\bar\lambda (x) := \sup \{ \mu \in (0, d(x, \Gamma)) : u(y) \geq u_{x, \lambda} (y),~ \forall\, 0 < \lambda < \mu,~ y \in B_1 \setminus (B_\lambda (x) \cup \Gamma) \}
$$
is well-defined for any $x$ satisfying $0 < d(x, \Gamma) < (0, d(\Gamma, \partial B_{1/2})/2)$ and is positive.

Next we show that there exists a small constant $\rho \in (0, d(\Gamma, \partial B_{1/2})/2)$ such that $\bar\lambda (x) = d(x, \Gamma)$ for all $x$ satisfying $0 < d(x, \Gamma) \leq \rho$.

For $y \in \partial B_1$ and $0 < \lambda < d(x, \Gamma) < \frac{d(\Gamma, \partial B_{1/2})}{2} \leq \frac{1}{4}$, we have
$$
|y^{x, \lambda} - x| \leq 2 \lambda^2 \leq 2 d(x, \Gamma)^2 \leq \frac{d(x, \Gamma)}{2}
$$
and
$$
|y^{x, \lambda}| \leq |y^{x, \lambda} - x| + |x| \leq \frac{d(x, \Gamma)}{2} + |x| \leq \frac{d(\Gamma, \partial B_{1/2})}{4} + \bigg( \frac{1}{2} - \frac{d(\Gamma, \partial B_{1/2})}{2} \bigg) < \frac{1}{2}.
$$
Hence
$$
d(y^{x, \lambda}, \Gamma) \leq |y^{x, \lambda} - x| + d(x, \Gamma) \leq \frac{3 d(x, \Gamma)}{2}
$$
and
$$
d(y^{x, \lambda}, \Gamma) \geq d(x, \Gamma) - |y^{x, \lambda} - x| \geq \frac{d(x, \Gamma)}{2}.
$$
It follows from \eqref{eq:dn-2u-0} and the above estimates that there exists a sufficiently small $\rho > 0$ independent of $x$ such that for all $y \in \partial B_1$, $x$ satisfying $0 < d(x, \Gamma) \leq \rho$ and $0 < \lambda < d(x, \Gamma)$,
\begin{equation}\label{eq:pB1la}
\aligned
u_{x, \lambda} (y) & = \bigg( \frac{\lambda}{|y - x|} \bigg)^{n - 2} u(y^{x, \lambda}) \\
& \leq (2 d(x, \Gamma))^{n - 2} u(y^{x, \lambda}) \leq \frac{\inf_{\partial B_2} u}{2} < \inf_{\partial B_2} u \leq u(y).
\endaligned
\end{equation}

Suppose $\bar\lambda (x) < d(x, \Gamma)$ for some $x$ satisfying $0 < d(x, \Gamma) \leq \rho$. For brevity, we will denote $\bar\lambda (x) = \bar\lambda$ in the below. By the definition of $\bar\lambda$,
\begin{equation}\label{eq:bala-a}
u(y) \geq u_{x, \bar\lambda} (y) ~~~~~~ \forall\, y \in B_1 \setminus (B_{\bar\lambda} (x) \cup \Gamma).
\end{equation}
As the proof of \eqref{eq:strong-g} in Theorem \ref{thm:sym-x1}, we assert that
\begin{equation}\label{eq:strong-a}
u(y) > u_{x, \bar\lambda} (y) ~~~~~~ \forall\, y \in \bar B_1 \setminus (\bar B_{\bar\lambda} (x) \cup \Gamma).
\end{equation}

Let
\begin{equation}\label{eq:Om1-a}
\Omega_1 := \Big\{ y \in B_1 \setminus (\bar B_{\bar\lambda} (x) \cup \Gamma) : u(y) < u(y^{x, \bar\lambda}) \Big\}.
\end{equation}
Clearly, $\Omega_1$ is an open subset of $B_1 \setminus (\bar B_{\bar\lambda} (x) \cup \Gamma)$. By \eqref{asym-mono} and \eqref{eq:bala-a}, for $y \in \Omega_1$ we have
\begin{equation}\label{eq:sup-a}
\aligned
- \Delta (u - u_{x, \bar\lambda}) & = f(u) - \bigg( \frac{\bar\lambda}{|y - x|} \bigg)^{n + 2} f(u(y^{x, \bar\lambda})) \\
& \geq \bigg( \frac{u(y)}{u(y^{x, \bar\lambda})} \bigg)^\frac{n + 2}{n - 2} f(u(y^{x, \bar\lambda})) - \bigg( \frac{\bar\lambda}{|y - x|} \bigg)^{n + 2} f(u(y^{x, \bar\lambda})) \\
& \geq \bigg( \frac{\bar\lambda}{|y - x|} \bigg)^{n + 2} \bigg[ \bigg( \frac{u(y)}{u_{x, \bar\lambda} (y)} \bigg)^\frac{n + 2}{n - 2} - 1 \bigg] f(u(y^{x, \bar\lambda})) \geq 0.
\endaligned
\end{equation}
As the proof of \eqref{eq:strong-g} in Theorem \ref{thm:sym-x1}. Suppose there exists $y_0 \in B_1 \setminus (\bar B_{\bar\lambda} (x) \cup \Gamma)$ such that $u(y_0) = u_{x, \bar\lambda} (y_0)$, then we can show that $\{ y \in B_1 \setminus (\bar B_{\bar\lambda} (x) \cup \Gamma) : u(y) = u_{x, \bar\lambda} (y) \}$ is a both open and closed nonempty subset of $B_1 \setminus (\bar B_{\bar\lambda} (x) \cup \Gamma)$. Hence, $u \equiv u_{x, \bar\lambda}$ in $B_1 \setminus (\bar B_{\bar\lambda} (x) \cup \Gamma)$. This contradicts \eqref{eq:pB1la}. Inequality \eqref{eq:strong-a} is proved.

Next, we wish to show
\begin{equation}\label{eq:hopf-a}
\partial_\nu (u - u_{x, \bar\lambda}) (y) > 0 ~~~~~~ \forall\, y \in \partial B_{\bar\lambda} (x),
\end{equation}
where $\nu$ is the unit outward normal vector on $\partial B_{\bar\lambda} (x)$. For $z_0 \in \partial B_{\bar\lambda} (x)$ satisfying $\partial_\nu (u - u_{x, \bar\lambda}) (z_0) < (n - 2) \bar\lambda^{- 1} u(z_0)$, by a direct calculation,
$$
(n - 2) \bar\lambda^{- 1} u(z_0) > \partial_\nu (u - u_{x, \bar\lambda}) (z_0) = (n - 2) \bar\lambda^{- 1} u(z_0) + 2 \partial_\nu u(z_0).
$$
Thus, $\partial_\nu u(z_0) < 0$ and
$$
\partial_\nu (u(\cdot^{x, \bar\lambda}) - u) (z_0) = - 2 \partial_\nu u(z_0) > 0.
$$
Recall that $u = u_{x, \bar\lambda} = u(\cdot^{x, \bar\lambda})$ on $\partial B_{\bar\lambda} (x)$. Then there exists a small $\delta > 0$ such that $B_\delta (z_0) \setminus \bar B_{\bar\lambda} (x) \subset \Omega_1$. By \eqref{eq:strong-a}, \eqref{eq:sup-a} and the Hopf lemma,
$$
\partial_\nu (u - u_{x, \bar\lambda}) (z_0) > 0.
$$
By the compactness of $\partial B_{\bar\lambda} (x)$,
$$
\partial_\nu (u - u_{x, \bar\lambda}) \big|_{\partial B_{\bar\lambda} (x)} \geq b > 0.
$$
By the continuity of $u$ and $\nabla u$, there exists $R \in (\bar\lambda, d(x, \Gamma))$ such that for all $\bar\lambda \leq \lambda \leq r \leq R$,
$$
\partial_\nu (u - u_{x, \lambda}) \big|_{\partial B_r (x)} \geq \frac{b}{2} > 0.
$$
Since $u = u_{x, \lambda}$ on $\partial B_\lambda (x)$, we have that for every $\bar\lambda \leq \lambda < R$,
\begin{equation}\label{eq:BR-a}
u(y) > u_{x, \lambda} (y) ~~~~~~ \forall\, y \in B_R (x) \setminus \bar B_\lambda (x).
\end{equation}
As the proof of \eqref{eq:uC2G-g} in Theorem \ref{thm:sym-x1}, to apply Lemma \ref{lem:suphar-int} to $(u - u_{x, \bar\lambda})$ on $\Omega_1 \setminus \bar B_R (x)$, we need to estimate $(u - u_{x, \bar\lambda})$ on $\partial (\Omega_1 \setminus \bar B_R (x)) \setminus \Gamma$. If $y \in \partial B_R (x)$, then by \eqref{eq:strong-a},
$$
(u - u_{x, \bar\lambda}) (y) \geq \inf_{\partial B_R (x)} (u - u_{x, \bar\lambda}) > 0.
$$
If $y \in B_1 \setminus (\bar B_R (x) \cup \Gamma)$ and $u(y) = u(y^{x, \bar\lambda})$, then
$$
u(y) - u_{x, \bar\lambda} (y) \geq \bigg[ 1 - \bigg( \frac{\bar\lambda}{R} \bigg)^{n - 2} \bigg] \inf_{B_{\bar\lambda} (x)} u > 0.
$$
By \eqref{eq:pB1la} and Lemma \ref{lem:suphar-int},
\begin{equation}\label{eq:uCR-a}
(u - u_{x, \bar\lambda}) (y) \geq C(R) > 0 ~~~~~~ \forall\, y \in \Omega_1 \setminus B_R (x)
\end{equation}
for some constant
$$
0 < C(R) \leq \min \bigg\{ \inf_{\partial B_R (x)} (u - u_{x, \bar\lambda}), \bigg[ 1 - \bigg( \frac{\bar\lambda}{R} \bigg)^{n - 2} \bigg] \inf_{B_{\bar\lambda} (x)} u, \frac{\inf_{\partial B_2} u}{2} \bigg\}.
$$
Obviously, \eqref{eq:uCR-a} holds automatically in $(B_1 \setminus (B_R (x) \cup \Gamma)) \setminus \Omega_1$. Hence, there exists $0 < \varepsilon < R - \bar\lambda$ such that for every $\bar\lambda \leq \lambda < \bar\lambda + \varepsilon$ and $y \in B_1 \setminus (B_R (x) \cup \Gamma)$,
$$
|(u_{x, \lambda} - u_{x, \bar\lambda}) (y)| \leq \frac{1}{2} \inf_{B_1 \setminus (B_R (x) \cup \Gamma)} (u - u_{x, \bar\lambda}).
$$
Therefore, for every $\bar\lambda \leq \lambda < \bar\lambda + \varepsilon$ and $y \in B_1 \setminus (B_R (x) \cup \Gamma)$,
$$
\aligned
(u - u_{x, \lambda}) (y) & \geq (u - u_{x, \bar\lambda}) (y) - |(u_{x, \lambda} - u_{x, \bar\lambda}) (y)| \\
& \geq \frac{1}{2} (u - u_{x, \bar\lambda}) (y) > 0.
\endaligned
$$
This together with \eqref{eq:BR-a} leads to a contradiction with the definition of $\bar\lambda$. Consequently, inequality \eqref{eq:moving-a} holds.

Let $r > 0$ small (less than $\rho^2$), $x_1, x_2 \in \Pi_r^{- 1} (z)$ with $z \in \Gamma$ be such that
$$
u(x_1) = \max_{\Pi_r^{- 1} (z)} u, ~~~~~~ u(x_2) = \min_{\Pi_r^{- 1} (z)} u.
$$
Let $e_1 = x_1 - z$, $e_2 = x_2 - z$, $x_3 = x_1 + \frac{\rho (e_1 - e_2)}{4 |e_1- e_2|}$. Then $e_1, e_2 \in (T_z \Gamma)^\perp$ and thus, $e_2 - e_1 \in (T_z \Gamma)^\perp$. Let $\lambda = \sqrt{\frac{\rho}{4} \big( |e_1 - e_2| + \frac{\rho}{4} \big)}$. It is easy to check that $0 < \lambda < |x_3 - z| = d(x_3, \Gamma) < \rho$. From \eqref{eq:moving-a} we obtain
$$
u(x_2) \geq u_{x_3, \lambda} (x_2).
$$
On the other hand, the definition of $u_{x_3, \lambda}$ gives
$$
\aligned
u_{x_3, \lambda} (x_2) & = \bigg( \frac{\lambda}{|e_1 - e_2| + \rho/4} \bigg)^{n - 2} u(x_1) \\
& = \bigg( \frac{1}{4 |e_1 - e_2|/\rho + 1} \bigg)^\frac{n - 2}{2} u(x_1) \\
& \geq \bigg(\frac{1}{8 r/\rho + 1} \bigg)^\frac{n - 2}{2} u(x_1).
\endaligned
$$
Thus,
$$
\max_{\Pi_r^{- 1} (z)} u \leq (8 r/\rho + 1)^\frac{n - 2}{2} \min_{\Pi_r^{- 1} (z)} u.
$$
This implies that for all $x, y \in \Pi_r^{- 1} (z)$,
$$
u(x) = u(y) (1 + O(r)) ~~~~~~ \textmd{as} ~ r \to 0^+,
$$
where $O(r)$ is uniform for $z \in \Gamma$. The proof of Theorem \ref{thm:asym} is completed.
\end{proof}

Finally, we give the proof of Theorem \ref{thm:asym-0}.

\begin{proof}[Proof of Theorem \ref{thm:asym-0}] Without loss of generality, we may assume that $\Omega = B_2$, $u \in C^2 (\bar B_2 \setminus \{ 0 \})$ and $f$ is continuous on $(0, + \infty)$. By assumption \eqref{eq:mono-0}, there is a constant $t_0 \geq 0$ such that
\begin{equation}\label{eq:t0-0}
t^{ - \frac{n + 2}{n - 2} } f(t) ~ \textmd{is nonincreasing for} ~ t \geq t_0.
\end{equation}
If the origin is a removable singularity, then there is nothing to prove. If the origin is non-removable, we claim that
\begin{equation}\label{eq:uinfty}
u(x) \to + \infty ~~~~~~ \textmd{as} ~ x \to 0.
\end{equation}
For $r \in (0, 1]$, we set $v_r (x) = u(rx)$ for $x \in B_2 \setminus B_{1/2}$. Then $v_r$ satisfies
$$
\Delta v_r (x) + C_r (x) v_r (x) = 0 ~~~~~~ \textmd{for} ~ x \in B_2 \setminus B_{1/2},
$$
where $C_r (x) := r^2 \frac{f(v_r (x))}{v_r (x)}$. By Proposition \ref{prop:blowup}, we have $|C_r (x)| \leq C |x|^{- 2}$ for $x \in B_2 \setminus B_{1/2}$. Applying the standard Harnack inequality yields
$$
\max_{|x| = 1} v_r (x) \leq C \min_{|x| = 1} v_r (x).
$$
Scaling back to $u$, we obtain the spherical Harnack inequality
\begin{equation}\label{eq:sphHar}
\max_{|x| = r} u(x) \leq C \min_{|x| = r} u(x) ~~~~~~ \textmd{for} ~ 0 < r \leq 1.
\end{equation}
Since the origin is a non-removable singularity, there exists a sequence of points $\{ x_i \} \subset B_1 \setminus \{ 0 \}$ such that $r_i = |x_i| \downarrow 0$ and $u(x_i) \uparrow + \infty$ as $i \to \infty$. By the maximum principle and the spherical Harnack inequality \eqref{eq:sphHar},
$$
\inf_{r_{i + 1} \leq |x| \leq r_i} u(x) \geq \inf_{ |x| = r_i, r_{i + 1} } u(x) \geq C^{- 1} \min (u(x_i), u(x_{i + 1})) \to + \infty.
$$
Claim \eqref{eq:uinfty} is proved. Thus, there exists a constant $r_0 \in (0, 2)$ such that
\begin{equation}\label{eq:r0}
u(x) > t_0 ~~~~~~ \textmd{for all} ~ x \in B_{r_0} \setminus \{ 0 \}.
\end{equation}

For $s \in [1, + \infty)$, we define the spherical average of $u$ on the $(n - 1)$-dimensional sphere of radius $s^{1/(2 - n)}$ centered at the origin
$$
A(s) = \frac{ s^\frac{n - 1}{n - 2} }{n |B_1|} \int_{ \partial B_{ s^{1/(2 - n)} } } u \,{\rm d}S.
$$
Then we have
$$
A''(s) = - \frac{ s^{ - \frac{n - 1}{n - 2} } }{n (n - 2)^2 |B_1|} \int_{ \partial B_{ s^{1/(2 - n)} } } f(u) \,{\rm d}S \leq 0.
$$
It follows that $A(s)$ is a positive concave $C^2$ function on $(1, + \infty)$. Thus, $A'(t)$ is nonnegative and nonincreasing on $(1, + \infty)$. Therefore,
$$
\lim_{s \to + \infty} \frac{A(s)}{s} = \lim_{s \to + \infty} A'(s) ~~~ \textmd{exists and is nonnegative}.
$$
This implies that the limit
\begin{equation}\label{eq:xn-2u-C0}
\lim_{x \to 0} |x|^{n - 2} \bar u(x) = C_0 ~~~ \textmd{exists and is nonnegative}.
\end{equation}

\noindent{\bf Case 1. $C_0 > 0$.} By the spherical Harnack inequality \eqref{eq:sphHar}, there exist two constants $C_1, C_2 > 0$ such that
$$
C_1 |x|^{2 - n} \leq u(x) \leq C_2 |x|^{2 - n} ~~~ \textmd{near the origin}.
$$
By \eqref{eq:t0-0} and the above estimate, we get
$$
f(u(x)) \leq \bigg( \frac{u(x)}{ C_1 |x|^{2 - n} } \bigg)^\frac{n + 2}{n - 2} f(C_1 |x|^{2 - n}) \leq \bigg( \frac{C_2}{C_1} \bigg)^\frac{n + 2}{n - 2} f(C_1 |x|^{2 - n})
$$
and
$$
f(u(x)) \geq \bigg( \frac{u(x)}{ C_2 |x|^{2 - n} } \bigg)^\frac{n + 2}{n - 2} f(C_2 |x|^{2 - n}) \geq \bigg( \frac{C_1}{C_2} \bigg)^\frac{n + 2}{n - 2} f(C_2 |x|^{2 - n}) \geq 0
$$
near the origin. Since $f(u(x)) \in L_{\rm loc}^1 (B_2)$, we have $f(C_1 |x|^{2 - n}), f(C_2 |x|^{2 - n}) \in L_{\rm loc}^1 (B_2)$. On the other hand, we define
$$
v(x) = \frac{1}{n (n - 2) |B_1|} \int_{B_1} \frac{f(u(y))}{ |x - y|^{n - 2} } \,{\rm d}y ~~~~~~ \textmd{for} ~ x \in B_1 \setminus \{ 0 \}.
$$
Then
$$
\aligned
v(x) & \leq C \int_{B_1} \frac{f(C_1 |y|^{2 - n})}{ |x - y|^{n - 2} } \,{\rm d}y \\
& \leq C \int_0^1 r^{n - 1} f(C_1 r^{2 - n}) \bigg( \int_{ \Sp^{n - 1} } \frac{1}{ |x - r \theta|^{n - 2} } \,{\rm d}S \bigg) \,{\rm d}r \\
& \leq C \int_0^1 r^{n - 1} f(C_1 r^{2 - n}) \frac{1}{ \max\{ |x|, r \}^{n - 2} } \,{\rm d}r \\
& \leq C |x|^{2 - n} \int_0^{|x|} r^{n - 1} f(C_1 r^{2 - n}) \,{\rm d}r + C \int_{|x|}^1 r f(C_1 r^{2 - n}) \,{\rm d}r.
\endaligned
$$
For the second term, since $f(C_1 |x|^{2 - n}) \in L_{\rm loc}^1 (B_2)$, for any $\varepsilon > 0$, there exists $\delta > 0$ such that
$$
\int_0^\delta r^{n - 1} f(C_1 r^{2 - n}) \,{\rm d}r < \varepsilon.
$$
Then, for $x \in B_\delta \setminus \{ 0 \}$, we have
$$
\aligned
\int_{|x|}^1 r f(C_1 r^{2 - n}) \,{\rm d}r & \leq \int_{|x|}^\delta r f(C_1 r^{2 - n}) \,{\rm d}r + \int_\delta^1 r f(C_1 r^{2 - n}) \,{\rm d}r \\
& \leq \varepsilon |x|^{2 - n} + C_\delta.
\endaligned
$$
Therefore, $\limsup_{x \to 0} |x|^{n - 2} \int_{|x|}^1 r f(C_1 r^{2 - n}) \,{\rm d}r \leq \varepsilon$. Since $\varepsilon > 0$ is arbitrary, we obtain $\lim_{x \to 0} |x|^{n - 2} \int_{|x|}^1 r f(C_1 r^{2 - n}) \,{\rm d}r = 0$. Consequently,
$$
v(x) = o(|x|^{2 - n}) ~~~ \textmd{near the origin}.
$$
It follows that $(u - v)$ is a positive harmonic function in $B_{r_1} \setminus \{ 0 \}$ for some small $r_1 > 0$. By the classical B\^ocher's theorem, there exists a constant $C > 0$ and a harmonic function $h \in C^\infty (B_{r_1})$ such that
$$
u(x) = \frac{C}{ |x|^{n - 2} } + \frac{1}{n (n - 2) |B_1|} \int_{B_1} \frac{f(u(y))}{ |x - y|^{n - 2} } \,{\rm d}y + h(x)
$$
near the origin. By \eqref{eq:xn-2u-C0}, we have $C = C_0$ and
$$
\lim_{x \to 0} |x|^{n - 2} u(x) = C_0.
$$

\noindent{\bf Case 2. $C_0 = 0$.} By the spherical Harnack inequality \eqref{eq:sphHar}, we have
\begin{equation}\label{eq:u-oG}
\lim_{x \to 0} |x|^{n - 2} u(x) = 0.
\end{equation}
This implies that the corresponding \eqref{eq:dn-2u-0} holds with $\Gamma = \{ 0 \}$. Noting further \eqref{eq:t0-0} and \eqref{eq:r0}, the moving sphere method in Theorem \ref{thm:asym} can be carried out in the same way. Therefore, we can find a small constant $\rho \in (0, r_0/4)$ such that for every $x \in \bar B_\rho \setminus \{ 0 \}$ and $\lambda \in (0, |x|)$,
$$
u(y) \geq u_{x, \lambda} (y) ~~~~~~ \forall\, y \in B_{r_0/2} \setminus (B_\lambda (x) \cup \{ 0 \}).
$$
This yields the asymptotic radial symmetry of $u$ near the origin. The proof of Theorem \ref{thm:asym-0} is completed.
\end{proof}

\noindent{\bf Conflict of interest}: On behalf of all authors, the corresponding author states that there is no conflict of interest.

\vskip0.10in

\noindent{\bf Data availability}: This manuscript has no associated data.

\bigskip

\noindent Xusheng Du

\noindent School of Mathematical Sciences \\
Shenzhen University, Shenzhen 518061, P. R. China \\[1mm]
Email: \textsf{xushengdu@szu.edu.cn}

\bigskip

\noindent Hui Yang

\noindent School of Mathematical Sciences \\
Shanghai Jiao Tong University, Shanghai 200240, P. R. China \\[1mm]
Email: \textsf{hui-yang@sjtu.edu.cn}

\end{document}